\numberwithin{equation}{section}
\newtheorem{Theorem}{Theorem}[section]
\newtheorem{Lemma}[Theorem]{Lemma}
\newtheorem{Proposition}[Theorem]{Proposition}
\newtheorem{Corollary}[Theorem]{Corollary}
\newtheorem{Assumption}{H.\!\!}
\theoremstyle{definition}
\newtheorem{Definition}{Definition}[section]
\newtheorem{Example}{Example}[section]
\theoremstyle{remark}
\newtheorem{Remark}{Remark}[section]
 \def\p{\partial} \def\nb{\nonumber}
\def\to{\rightarrow}
 \def\ol{\overline}    
\def\Om{\Omega}   
\newcommand{\q}{\quad}   \newcommand{\qq}{\qquad}
  \def\fa{\forall}
\def\eps{\varepsilon}
\def\ms{\medskip}
\def \la{\langle} \def\ra{\rangle}
\def\cF{\mathcal{F}}
\def\cG{\mathcal{G}}
\def\cH{\mathcal{H}}
\def\cL{\mathcal{L}}
\def\cS{\mathcal{S}}
\def\d{{\mathrm{d}}}
\def\sE{{\mathbb{E}}}
\def\sN{{\mathbb{N}}}
\def\sP{\mathbb{P}}
\def\sR{{\mathbb R}}
\def\sS{{\mathbb{S}}}
\newcommand{\tr}{\textnormal{tr}}
\DeclareMathOperator*{\argmin}{arg\,min}
\DeclareMathOperator*{\spn}{span}
\newcommand{\lc}
{\mathrel{\raise2pt\hbox{${\mathop<\limits_{\raise1pt\hbox
{\mbox{$\sim$}}}}$}}}
\newcommand{\gc}
{\mathrel{\raise2pt\hbox{${\mathop>\limits_{\raise1pt\hbox{\mbox{$\sim$}}}}$}}}
\newcommand{\ec}
{\mathrel{\raise2pt\hbox{${\mathop=\limits_{\raise1pt\hbox{\mbox{$\sim$}}}}$}}}
\def\bb{\begin{equation}} \def\ee{\end{equation}}
\def\bbn{\begin{equation*}} \def\een{\end{equation*}}
\def\beqn{\begin{eqnarray}}  \def\eqn{\end{eqnarray}}
\def\beqnx{\begin{eqnarray*}} \def\eqnx{\end{eqnarray*}}
\def\bn{\begin{enumerate}} \def\en{\end{enumerate}}
\def\bd{\begin{description}} \def\ed{\end{description}}
\definecolor{DarkGreen}{rgb}{0.2,0.6,0.2}
\def\pink#1{\textcolor{\pink}{#1}}
\definecolor{brilliantrose}{rgb}{1.0, 0.33, 0.64}
\begin{document}

\title{
Towards An Analytical Framework for {Dynamic} Potential Games
  }

\author{
\and Xin Guo\thanks{Department of Industrial Engineering and Operations Research, University of California, Berkeley, USA ({\tt xinguo@berkeley.edu}).}
\and
Yufei Zhang\thanks{Department of Mathematics, Imperial College London,  London,  UK 
({\tt yufei.zhang@imperial.ac.uk}).}
}

\date{}
\maketitle

\noindent\textbf{Abstract.} 
Potential game is  an emerging notion and framework for studying $N$-player games, especially with
heterogeneous players.
In this paper,  we  build an analytical framework for dynamic potential games. We prove  that  a game is a dynamic potential game  if and only if each player's value function  can be decomposed as  a potential function and a residual term which is
  solely dependent on other players' policies. 
  {This decomposition is consistent with the result in the static setting} and  enables us to  identify and analyze an  important  and new class of  dynamic potential games called the distributed game. 
  Moreover, 
  we prove  that   a  game is a  dynamic potential game
if  the value function   has a symmetric Jacobian.
{This 
 generalizes the differential characterization for static potential games
by replacing the  classical derivative with a new notation 
 of  functional derivative with respect to Markov policies.}
For a general class of continuous-time stochastic   games,
we explicitly characterize 
their potential functions. 
{In particular, 
we show that the potential function  of  linear-quadratic  games can be studied through   a system of linear ODEs. Furthermore,
under a rank  condition on  control coefficients, we prove a linear-quadratic game is a Markov potential game if and only if all players have identical cost functions. 
}

\medskip
\noindent
\textbf{Key words.} Dynamic potential game, 
 closed-loop Nash equilibrium, linear derivative, distributed game, 
 stochastic differential game,
 linear-quadratic potential game
%

\ms
\noindent
\textbf{AMS subject classifications.} 
91A14, 91A06,  91A15



\medskip
 
 \section{Introduction}

General-sum $N$-player games are of paramount importance in various fields, spanning economics, engineering, biology, and ecology  (see e.g., \cite{fudenberg1991game, bacsar1998dynamic}). These games model strategic interactions among multiple players, where each player aims to optimize her individual objective based on the observed system state and actions of other players. 
$N$-player games are notoriously difficult to analyze.

One well-established paradigm  for analyzing 
$N$-player games  is the mean field  framework \cite{caines2006large,lasry2007mean}. 
Its ingenious yet simple aggregation ideas enables efficiently approximating  the Nash equilibrium of  games with a large number of players.  However, such an approximation typically requires that
  players  are  (approximately) homogeneous 
who interact weakly and symmetrically.
As a result, the  mean field approach is not appropriate for general games with heterogeneous players and/or with general forms of interactions.

 An alternative and emerging
 framework for studying  $N$-player   games,
 especially  with heterogeneous players, is the potential game introduced  by \cite{monderer1996potential}. 
 A game  is a {(static)} potential game  if there exists a potential function such that whenever one player unilaterally deviates from their action, the change of the potential function equals to the change of that  player's value function. 
Once the potential function is identified, the daunting task of finding  Nash equilibrium of the game is simplified  to an optimization problem of finding the global optimum of the potential function.  

This introduction of potential functions for  static  games has since been adopted naturally for dynamic games.  
{The key distinctions between dynamic potential games and static ones lie in the   dependence of the potential function on the set  of admissible polices, and 
the interconnection and interaction among all players  through the state dynamics. 
 This makes constructing a potential function or verifying its existence in dynamic games significantly more challenging.  Indeed, \cite{leonardos2022global} shows that a dynamic game where the cost at each state forms a static potential game may not   be a dynamic potential game. 
  }
  
 One of the main culprits for the slow progress on dynamic potential games is the lack of appropriate analytical tools and limited understanding of the game structure for a dynamic potential game. 
The vast majority of existing literature on dynamic potential games focuses on algorithms for computing Nash equilibria of dynamic potential games 
\cite{marden2009joint,  marden2012state,   macua2018learning, mao2021decentralized, song2021can, zhang2021gradient,
ding2022independent, fox2022independent, 
leonardos2022global,
maheshwari2022independent,
 narasimha2022multi, zhangglobal}. 
These algorithmic studies are, however, built {\it without} verifying {\it a priori} whether the game under consideration is actually a dynamic potential game.
 In  discrete-time games, separability of  value functions has been identified in \cite{leonardos2022global}. With this separability property,  one  class of dynamic    games known as  team  Markov   games has been analyzed and remains the primary example for potential games studies \cite{littman2001value, wang2002reinforcement}.


 \paragraph{Our work.} 

{This paper focuses on   closed-loop dynamic potential games, which have a wide range of applications including economic  modelling and analysis of reinforcement learning algorithms. 
In closed-loop dynamic   games, 
 players optimize their objectives over functions of the time and the system state,  known as policies.  
This is in contrast to open-loop dynamic potential games  \cite{fonseca2018potential, fonseca2020stochastic}, where the strategies are functions of  the time and  system noise.
}


\begin{itemize}
\item  We prove  that    a  game is a  dynamic potential game  if and only if each player's value function  can be decomposed in two components:  a potential function and a residual term 
  solely dependent on other players' policies (Theorem  \ref{thm:separation_value}).
{It generalizes similar results for static games \cite{monderer1996potential,ui2000shapley} and discrete Markov games \cite{leonardos2022global} to general dynamic games with arbitrary state dynamics and policy classes.}

\item   We  apply this decomposition   characterization to
identify a new and  important   class of dynamic potential games called distributed games
(Section \ref{sec:distributed_game}).
In these  games,  
 each player's state and polices are decoupled,
allowing for the explicit construction of dynamic potential functions based on the games' cost functions  (Theorems \ref{thm:distributed_game} 
and 
\ref{thm:distributed_game_differentiable}).
{This characterization applies to distributed games with measurable coefficients and heterogeneous players. It  also underscores the critical distinction between static and dynamic potential games, even in  this decoupled setting. Specifically, constructing dynamic potential functions requires considering static potential games with enlarged action sets that include players’ state variables as additional actions (Remark \ref{rmk:distributed}).
 
}


  To the best of our knowledge, 
this is the first systematic characterization of potential functions for continuous-time   distributed games with heterogeneous players.
{It enables the analysis of   closed-loop Nash equilibria  in these (previously intractable) large-scale games using stochastic control techniques, as detailed in  Remark   \ref{rmk:distributed}. 
}

\item 
 We  introduce a notion  of  linear derivative
to characterize the sensitivity   
of the value function with respect to   unilateral  deviation of policies  (Definition \ref{def:linear_deri}). 
Leveraging this concept, 
we prove  that   an arbitrary   dynamic game is a  dynamic potential game
if  the value functions   have a symmetric Jacobian formed by their second-order derivatives 
(Theorem \ref{thm:symmetry_value_sufficient}). Moreover, 
we  construct  the potential function using   first-order derivatives of value functions. 

 {

The notation of linear derivatives is crucial for extending the differential characterization, originally established for static potential games \cite{monderer1996potential, ui2000shapley, arefizadeh2023characterization}, to dynamic games with continuous state and action spaces and infinite-dimensional policy classes.
It   addresses the challenge of defining derivatives over policy classes lacking a natural metric structure; see Remark \ref{rmk:differential_characterization} for details.

}

\item 
For continuous-time stochastic   games, 
where the state dynamics involves controlled drift and diffusion coefficients, 
we 
establish the   linear differentiability  of value functions and characterize the dynamic potential function via two approaches:  one probabilistic approach where the potential function is represented via 
sensitivity processes of the state and control processes with respect to policies,
and another where  the potential function is expressed via  a system of linear  partial differential equations (PDEs).  
{The probabilistic approach applies to games with twice differentiable policies and model coefficients (see Remark \ref{rmk:pde_C2_policy}), 
while the PDE approach accommodates more general   forms of policies and coefficients, such as    H\"{o}lder continuous 
state coefficients and policies (see Remark \ref{rmk:pde_irregular_policy}). 
}

\item We show that the probabilistic and PDE       characterizations  are consistent for   linear-quadratic  (LQ) games, and reduce to   a system of ordinary differential equations (Theorem  \ref{thm:lq_characterisation}). 
 { In particular, we prove that 
   under a  rank  condition on control coefficients, a linear-quadratic game is a Markov potential game if and only if all players have identical cost functions 
  (Proposition \ref{prop:lq_nondegenreate}).

 To the best of our knowledge, this is  
the first necessary and sufficient condition for multi-dimensional LQ Markov potential games. 
It implies that  a strong coupling between the players' actions    requires 
 the homogeneity of cost functions for a potential game. 
It also suggests that  the strength of interactions among players through dynamics and policies is crucial in determining whether a dynamic game is a potential game. These are intriguing insights about the structure of dynamic potential games (Remark \ref{rmk:lq_rank_condition}). 
 }
\end{itemize}

\paragraph{Our approach.} One of the key challenges to develop the general analytical framework for the dynamic potential game is to develop an appropriate notion of the functional derivative
 {with respect to polices}
for possibly continuous  time/state spaces and general policy classes. 
For instance, in the discrete time setting of  \cite{ leonardos2022global, hosseinirad2023general} with finite-dimensional parameterizations  for policies,   Fr\'{e}chet  derivatives are adopted to  show that the game is a potential game if the second-order derivatives of the value functions are symmetric. However,  neither Fr\'{e}chet derivative nor parametrization is  appropriate for the general framework: the set of admissible policies becomes infinite-dimensional and may not be a normed   space; for games with stochastic policies,
 the  admissible polices   take values in the space of  probability measures 
and hence do  not form a vector space. 
 To overcome the above difficulties, we define the derivative of 
value function along a convex combination of two policies.   
Compared with the Fr\'{e}chet  derivative,
such a   
  directional derivative    is   easier  to compute and
 well-defined  under   weaker conditions.
It    also eliminates the need to select a norm over the   policy class, providing more flexibility in the analysis 
(see Remark \ref{rmk:differential_characterization} for more details).

In contrast to the mean field approach and its graphon extensions, which either  require homogeneity in players' state dynamics and symmetric interaction among players  and policy classes or at least some specific \emph{asymptotic} structures of the cost functions (see e.g., \cite{gao2020linear, aurell2022stochastic, lacker2022label, bayraktar2023propagation}),  the potential game framework offers a new perspective on constructing (closed-loop) Nash equilibrium for finite-player games that is non-asymptotic in nature. It also accommodates heterogeneity among players to allow for different action sets and state dynamics, different dependencies on individual behavior, and different interaction strengths between pairs of players (see Theorem \ref{thm:distributed_game_differentiable}
and  Proposition \ref{prop:distribted_qudratic_MF}).
{
We emphasize that our proposed framework characterizes  closed-loop dynamic potential games with a finite number of heterogeneous players. This contrasts with potential mean field games, which involve infinitely many homogeneous players and are associated with McKean-Vlasov control problems (see, for example, \cite{aurell2018mean, cecchin2022weak, gangbo2022global, carmona2023synchronization} and their references for more details).

}

For instance, in Section \ref{sec:distributed_game}, we construct potential functions for  closed-loop $N$-player distributed games, arising from      multi-vehicle coordination \cite{aghajani2015formation},   energy market \cite{paccagnan2016aggregative}, 
crowd motion modelling \cite{aurell2018mean,  carmona2023synchronization}, 
 and  interacting particle models in  biology \cite{nourian2011mean} and physics \cite{carmona2023synchronization}. 
The results  
 accommodate players with heterogeneous characteristics and interactions, facilitating the construction of distributed closed-loop Nash equilibria in large-scale games using stochastic control techniques.

\paragraph{Notation.}
For each $n\in \sN$, we denote by $\sS^n$ the space of $n\times n$ symmetric matrices. 
For each $T>0$,  $n\in \sN$, $\alpha\in (0,1]$,  probability space
$(\Omega, \cF,   \sP)$    and Euclidean space
$(E, |\cdot|)$, we      introduce the following spaces:
\begin{itemize}
\item 
  $\cS^\infty([t,T];E)$, $t\in [0,T]$,
is the space of  measurable processes $X:\Om\times [t,T]\to E $
  such that 
 $\sE[\sup_{s\in [t,T]} |X_s|^q]<\infty$  for all $q\ge 1$;
 \item 
 $\cH^\infty([t,T];E)$,  $t\in [0,T]$, 
is the space of  measurable processes $X:\Om\times [t,T]\to \sR^n $
  such that 
 $\sE[\int_t^T |X_s|^q \d s]<\infty$  for all $q\ge 1$;
\item 
$C^{i+\alpha/2, j+\alpha}([0,T]\times \sR^{n}; E)$, $i,j\in \sN\cup\{0\}$, is the space of    functions 
$u:[0,T]\times \sR^{n}\to E$  
such that 
$
\|u\|_{{i+\alpha/2,j+\alpha }} \coloneqq \sum_{\ell= 0}^{i} \|\p^{\ell}_t u\|_{0 } 
+ \sum_{\ell= 0}^{j}  \|\p^\ell_x u\|_{0} 
+ [\p^{i}_t u]_{\alpha/2 } 
+[\p^{j}_x u]_{\alpha } 
<\infty
$, where
$\|u\|_{0}\coloneqq \sup_{(t,x) \in [0,T]\times \sR^n}
|u(t,x)|$
and 
$
[u]_{\alpha} \coloneqq  \sup_{(t,x),(t,x')\in [0,T]\times \sR^n}
\frac{|u(t,x)-u(t',x')|}{(|t-t'|^{1/2}+|x-x'|)^\alpha } 
$. 
Similar definitions extend to
$C^{i+\alpha/2, j+\alpha,k+\alpha}([0,T]\times \sR^{n}\times \sR^{m}; E)$ 
for 
 functions with two   space variables. 
\end{itemize}
 Throughout the paper, unless otherwise stated, proofs of  theorems 
   are deferred to Section  \ref{sec:main_proof}.

\section{(Dynamic) Potential Game and Its Nash Equilibrium}
To introduce the  mathematical framework for dynamic potential games, let us start by some basic notions for the game and associated policies. 
 Consider a     game 
$\cG=(I_N, \cS, (A_i)_{i\in I_N}, \pi^{(N)}, (V_i)_{i\in I_N})$, 
where 
 $I_N= \{1,\ldots, N\}$, $N\in \mathbb{N}$, is a finite set of players;  
 $\mathcal{S}$ is a  topological space  representing the state space of the underlying dynamics;
  $A_i$ is a Borel subset of a  topological vector space    representing   the  action set of  player $i$;
  $\pi^{(N)}= \prod_{i\in I_N} \pi_i$ 
is  the set of     joint policy profiles  of 
   all   players, where 
    $\pi_i$ is a set of 
    Borel measurable functions
    $\phi_i:\cS\to A_i$
       representing the admissible    policies 
  of player $i$;
   and $V_i:   \cS \times \pi^{(N)} \to \sR$ 
  is the value function  of player $i$, 
  where $V_i^{s}(\phi)\coloneqq V_i(s,\phi)$ is   player $i$'s expect cost  
if the state dynamics    starts at     the   state $s\in \cS$  
  and all players  take the policy profile $\phi \in \pi^{(N)}$. 
For each  $i\in I_N$,   we  denote  by $\pi^{(N)}_{-i}= \prod_{j\in I_N\setminus \{i\}}  \pi_j$  
 the sets of   policy  profiles    of all players except player $i$.
The elements of   $\pi^{(N)}$ and $\pi^{(N)}_{-i}$ 
are denoted as 
 ${\phi} =(\phi_i)_{i\in I_N}$, 
 and $\phi_{-i}=(\phi_j)_{j\in I_N\setminus \{i\}}$, respectively. 
 
This game includes   discrete-time
   and continuous-time 
     dynamic games;  and its value function $V_i^{s}(\phi)$
 may depend  on the underlying state system, whose time and space variables  are referred collectively as the state variable.  Note that one may simply view the static game as a stateless dynamic game. 

The focus of this paper is on a class of dynamic games called dynamic potential games, defined through two related yet distinct notions of Nash equilibrium:  closed-loop and    Markovian types. 

\begin{Definition}[Nash equilibrium]
\label{def:NE}
A policy profile  $\phi \in  \pi^{(N)}$ is  a  closed-loop Nash equilibrium for 
 $\cG$ with initial state $s_0\in \cS$
if
$$
V^{s_0}_i((\phi_i,\phi_{-i}))\le V^{s_0}_i((\phi'_i,\phi_{-i})) \quad \fa  i\in I_N, \phi'_{i}\in \pi_{i}.
$$
A policy profile  $\phi \in  \pi^{(N)}$ is   a  Markov   Nash equilibrium for  $\cG$  
if 
$$
V^{s}_i((\phi_i,\phi_{-i}))\le V^{s}_i((\phi'_i,\phi_{-i}))
\quad  \fa s\in \cS, i\in I_N, \phi'_{i}\in \pi_{i}.
$$
\end{Definition}

Definition \ref{def:NE}  is consistent with the concept of closed-loop and    Markov equilibrium introduced in stochastic differential games, as described in \cite{carmona2018probabilistic}. Note that 
a closed-loop equilibrium $\phi$  can depend on the initial state $s_0$  of the   system dynamics,
while 
Markov equilibrium requires that the policy profile $\phi$  be a closed-loop Nash equilibrium for {\it all} initial states.


\begin{Definition}[Dynamic potential game]
\label{def:MPG}
A   dynamic   game 
$\cG$ 
 with initial state $s_0\in \cS$
     is   a closed-loop   potential game (CLPG),
     if there exists 
$\Phi^{s_0}:   \pi^{(N)}\to \sR$, called   a potential function,    such that for all   $i\in I_N$, 
   $\phi_i,\phi_i'\in  \pi_i$ and    $\phi_{-i}\in  \pi^{(N)}_{-i}$,
   \bb
   \label{eq:MPG}
   \Phi^{s_0}((\phi'_i,\phi_{-i})) -\Phi^{s_0}((\phi_i,\phi_{-i})) = V^{s_0}_i((\phi'_i,\phi_{-i})) -V^{s_0 }_i((\phi_i,\phi_{-i})).
   \ee
 A game   $\cG$ is a Markov potential game (MPG), 
     if there exists  a potential function
$\Phi: \cS\times   \pi^{(N)}\to \sR$ such that for all $s_0\in \cS$, 
$\Phi(s_0,\cdot)$ is a potential function of $\cG$  
 with initial state $s_0\in \cS$.
\end{Definition}

Intuitively, 
a  game $\cG$ is a dynamic potential game  if there exists a potential function such that whenever one player unilaterally deviates from their policy, the change of the potential function equals to the change of that  player's value function. 
%
%
%
Definition \ref{def:MPG} enables finding 
a  Nash equilibrium of a   potential game  
by optimizing its potential function,
as shown in the following proposition.
\begin{Proposition}
\label{prop:ne_mpg}

If there exists $s_0\in \cS$
such that 
   $\cG$ with initial state $s_0 $ is a CLPG with potential function $\Phi^{s_0}$,
then 
any   $\phi^*\in \argmin_{\phi\in  \pi^{(N)} }\Phi^{s_0}(\phi)$ 
 is a closed-loop Nash equilibrium of $\cG$ with initial state $s_0 $.   
   Similarly, if $\cG$ is an MPG with potential function $\Phi$
and  $\phi^*\in \pi^{(N)}$ satisfies  
$\phi^*\in \argmin_{\phi\in  \pi^{(N)} }\Phi(s, \phi)$ for all $s\in \cS$,
then  $\phi^*$
 is an Markov  Nash equilibrium of $\cG$.
\end{Proposition}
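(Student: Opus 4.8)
The plan is to prove Proposition~\ref{prop:ne_mpg} directly from the defining identity~\eqref{eq:MPG} of a potential function, with no machinery beyond the definitions of CLPG, MPG, and Nash equilibrium. The intuition is that a minimiser of the potential function cannot be improved upon by any unilateral deviation, because~\eqref{eq:MPG} transfers the (non-negative) change in $\Phi$ under such a deviation into the (equal) change in that agent's value function.

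For the first claim, suppose $\cG$ with initial state $s_0$ is a CLPG with potential function $\Phi^{s_0}$, and let $\phi^*=(\phi_i^*)_{i\in I_N}\in\argmin_{\phi\in\pi^{(N)}}\Phi^{s_0}(\phi)$. Fix an arbitrary agent $i\in I_N$ and an arbitrary alternative policy $\phi_i'\in\pi_i$. Write $\phi_{-i}^*=(\phi_j^*)_{j\in I_N\setminus\{i\}}\in\pi^{(N)}_{-i}$. Since $(\phi_i',\phi_{-i}^*)$ and $(\phi_i^*,\phi_{-i}^*)=\phi^*$ both lie in $\pi^{(N)}$ and $\phi^*$ is a global minimiser of $\Phi^{s_0}$, we have
\[
\Phi^{s_0}\big((\phi_i',\phi_{-i}^*)\big)-\Phi^{s_0}\big((\phi_i^*,\phi_{-i}^*)\big)\ge 0.
\]
Applying~\eqref{eq:MPG} with the roles $\phi_i\coloneqq\phi_i^*$, $\phi_{-i}\coloneqq\phi_{-i}^*$, the left-hand side above equals $V_i^{s_0}((\phi_i',\phi_{-i}^*))-V_i^{s_0}((\phi_i^*,\phi_{-i}^*))$, hence
\[
V_i^{s_0}\big((\phi_i^*,\phi_{-i}^*)\big)\le V_i^{s_0}\big((\phi_i',\phi_{-i}^*)\big).
\]
Since $i\in I_N$ and $\phi_i'\in\pi_i$ were arbitrary, $\phi^*$ satisfies the defining inequality of a closed-loop Nash equilibrium for $\cG$ with initial state $s_0$ in Definition~\ref{def:NE}.

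For the second claim, suppose $\cG$ is an MPG with potential function $\Phi$, and let $\phi^*\in\pi^{(N)}$ satisfy $\phi^*\in\argmin_{\phi\in\pi^{(N)}}\Phi(s,\phi)$ for all $s\in\cS$. By Definition~\ref{def:MPG}, for each fixed $s_0\in\cS$ the map $\Phi(s_0,\cdot)=\Phi^{s_0}$ is a potential function of $\cG$ with initial state $s_0$, and $\phi^*$ is a global minimiser of $\Phi^{s_0}$ by hypothesis; so the argument of the first claim applies verbatim (with this $s_0$) and yields $V_i^{s_0}((\phi_i^*,\phi_{-i}^*))\le V_i^{s_0}((\phi_i',\phi_{-i}^*))$ for all $i\in I_N$ and $\phi_i'\in\pi_i$. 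As $s_0\in\cS$ was arbitrary, $\phi^*$ satisfies the Markov Nash equilibrium condition of Definition~\ref{def:NE}, completing the proof.

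There is essentially no obstacle here: the statement is a one-line consequence of~\eqref{eq:MPG} together with the minimality of $\phi^*$. The only points requiring a little care are bookkeeping ones: making sure the deviated profile $(\phi_i',\phi_{-i}^*)$ is compared against $\phi^*$ itself in the minimisation (which it is, since both belong to $\pi^{(N)}$), and, in the MPG case, invoking that $\phi^*$ minimises $\Phi(s,\cdot)$ \emph{simultaneously} for every $s$, which is exactly the hypothesis and is what upgrades the conclusion from a single-initial-state closed-loop equilibrium to a Markov equilibrium.
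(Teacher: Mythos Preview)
Your proof is correct. The paper does not spell out a proof of Proposition~\ref{prop:ne_mpg}; it simply introduces the result with the remark ``By Definition~\ref{def:MPG}, a Nash equilibrium of a potential game can be obtained by optimising its potential function,'' treating it as an immediate consequence of \eqref{eq:MPG}, which is exactly the argument you give.
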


  Proposition \ref{prop:ne_mpg} shows the 
 importance of an explicit criterion  for   the existence or the construction of a potential function for a given potential game. The most known example of MPG is the team  Markov  game, where the potential function 
corresponds to  the players' value function. 

\begin{Example}[Team Markov Game]\label{prop:team_MG}
This is  
a  game where all players have a common interest. That is, in this game $\cG=( I_N, \cS, (A_i)_{i\in I_N}, \pi^{(N)}, (V_i)_{i\in I_N})$,  
  $V_i=V_j$ for all $i,j \in I_N$; see \cite{littman2001value, wang2002reinforcement}.  
In this setting,
   $\cG$ is an MPG with a potential function  $\Phi\equiv V_1$. 
  
\end{Example}


Moreover, a game is a potential game if and only if the value function of each player can be decomposed into two terms:
 one that is common for all players, namely the potential function, and the other  that may be different for each player but depends only on the polices of other players. 


 \begin{Theorem}
 \label{thm:separation_value}
A game  $\cG=(I_N, \cS, (A_i)_{i\in I_N}, \pi^{(N)}, (V_i)_{i\in I_N})$
 with initial state $s_0\in \cS$ 
is a CLPG with 
potential $\Phi^{s_0}$ if and only if  
for all $i\in I_N$, there exists   $U^{s_0}_i:  \pi^{(N)}_{-i}\to \sR$ such that 
\begin{equation}
\label{eq:separation_value}
V^{s_0}_i((\phi_i, \phi_{-i}))=  \Phi^{s_0} ((\phi_i, \phi_{-i})) + U^{s_0}_i(\phi_{-i}),
\quad \fa \phi_i\in \pi_i, \phi_{-i}\in \pi^{(N)}_{-i}.
\end{equation}
Furthermore, a game  $\cG$ is an MPG if and only if \eqref{eq:separation_value}
holds for all $s_0\in S$ and $i\in I_N$.
\end{Theorem}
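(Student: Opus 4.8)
The plan is to prove the two directions of the equivalence separately, exploiting the defining identity \eqref{eq:MPG} of a CLPG.

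\textbf{($\Leftarrow$) Sufficiency of the decomposition.} Suppose that for each $i\in I_N$ there exists $U^{s_0}_i:\pi^{(N)}_{-i}\to\sR$ with $V^{s_0}_i((\phi_i,\phi_{-i})) = \Phi^{s_0}((\phi_i,\phi_{-i})) + U^{s_0}_i(\phi_{-i})$. Fix $i\in I_N$, $\phi_i,\phi_i'\in\pi_i$ and $\phi_{-i}\in\pi^{(N)}_{-i}$. Then
\begin{equation*}
V^{s_0}_i((\phi_i',\phi_{-i})) - V^{s_0}_i((\phi_i,\phi_{-i})) = \bigl[\Phi^{s_0}((\phi_i',\phi_{-i})) + U^{s_0}_i(\phi_{-i})\bigr] - \bigl[\Phi^{s_0}((\phi_i,\phi_{-i})) + U^{s_0}_i(\phi_{-i})\bigr],
\end{equation*}
and the $U^{s_0}_i(\phi_{-i})$ terms cancel because they do not depend on the $i$-th component, leaving exactly the CLPG identity \eqref{eq:MPG}. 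Hence $\cG$ with initial state $s_0$ is a CLPG with potential $\Phi^{s_0}$.

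\textbf{($\Rightarrow$) Necessity.} Suppose $\cG$ with initial state $s_0$ is a CLPG with potential $\Phi^{s_0}$. For each $i\in I_N$ I would fix an arbitrary reference policy $\bar\phi_i\in\pi_i$ (assuming $\pi_i\neq\emptyset$, which is implicit) and define $U^{s_0}_i(\phi_{-i}) \coloneqq V^{s_0}_i((\bar\phi_i,\phi_{-i})) - \Phi^{s_0}((\bar\phi_i,\phi_{-i}))$, a function of $\phi_{-i}$ only. Then for any $\phi_i\in\pi_i$, applying \eqref{eq:MPG} with $\phi_i' = \phi_i$ and $\phi_i$ replaced by $\bar\phi_i$ gives
\begin{equation*}
V^{s_0}_i((\phi_i,\phi_{-i})) - V^{s_0}_i((\bar\phi_i,\phi_{-i})) = \Phi^{s_0}((\phi_i,\phi_{-i})) - \Phi^{s_0}((\bar\phi_i,\phi_{-i})),
\end{equation*}
which rearranges to $V^{s_0}_i((\phi_i,\phi_{-i})) = \Phi^{s_0}((\phi_i,\phi_{-i})) + U^{s_0}_i(\phi_{-i})$, i.e.\ \eqref{eq:separation_value}. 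This holds for every $i$, as required.

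\textbf{The MPG claim} follows immediately: by Definition \ref{def:MPG}, $\cG$ is an MPG with potential $\Phi$ iff $\Phi(s_0,\cdot)$ is a CLPG potential for every $s_0\in\cS$; applying the equivalence just proved at each $s_0$ (with $\Phi^{s_0} = \Phi(s_0,\cdot)$ and the corresponding $U^{s_0}_i$) yields that this is equivalent to \eqref{eq:separation_value} holding for all $s_0$ and all $i$. There is no serious obstacle here; the only point requiring a word of care is the choice of the reference policy $\bar\phi_i$ in the necessity direction (one must note it can be chosen independently of $\phi_{-i}$, so that $U^{s_0}_i$ genuinely depends on $\phi_{-i}$ alone) and the observation that the decomposition \eqref{eq:separation_value} is far from unique — the same $\Phi^{s_0}$ works with any $U^{s_0}_i$ shifted by a constant.
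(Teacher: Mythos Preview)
Your proof is correct and follows essentially the same approach as the paper: the sufficiency direction is immediate from cancellation of the $U^{s_0}_i$ terms, and for necessity the paper shows that $V^{s_0}_i((\phi_i',\phi_{-i})) - \Phi^{s_0}((\phi_i',\phi_{-i}))$ is independent of the choice of $\phi_i'$, which is exactly what your fixed reference $\bar\phi_i$ accomplishes. The only cosmetic difference is that the paper phrases this as well-definedness of $U^{s_0}_i$ (comparing two arbitrary $\phi_i',\phi_i''$) rather than fixing a single $\bar\phi_i$, but the content is identical.
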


{
\begin{Remark}[Dynamic vs static potential games]
The definition of dynamic potential game and the subsequent Proposition \ref{prop:ne_mpg} are consistent with their respective counterparts for  static potential games.
 Theorem \ref{thm:separation_value}   generalizes results from \cite{ui2000shapley} for static   games and \cite{leonardos2022global} for discrete Markov games  with stationary policies to general dynamic games with arbitrary state dynamics and policy classes.

 The key distinctions between dynamic potential games and static ones lie in the   dependence of the potential function on the set  of admissible policy profiles, and the nonlinear relationship between value functions and policies through the state dynamics.
 This makes identifying a potential function in dynamic games, if one exists, significantly more challenging. The broader the class of admissible policy profiles $\pi^{(N)}$, the stricter the requirements on system dynamics and payoff functions to guarantee the existence of a potential function.
Deriving  and decomposing value functions for  dynamic games, as in Theorem \ref{thm:separation_value}, is more challenging  than for static ones,  primarily   due to the interconnection and interaction among all players that arise through the state dynamics. 

In fact, a dynamic game in which the cost at each state forms a static potential game may not necessarily be an MPG \cite{leonardos2022global}. As a result, there are few known examples of MPGs beyond the team Markov game. In the next section, we will identify and analyze an important (new) class of dynamic  potential games called  distributed games. 
\end{Remark}
  }

\section{Distributed Game  as Markov Potential Game}
\label{sec:distributed_game}
 In a distributed   game, each    player's control depends {\it only} on the state of herself, although her value function may depend on the joint state and policy profiles    of all players. More precisely,


 \begin{Definition}
     [Distributed  game
 $\cG_{\rm dist}$]
 \label{example:distributed_game}
 Let $T>0$, $N\in \sN$,   $(n_i)_{i=1}^N,(k_i)_{i=1}^N\subset  \sN$. 
The dynamic game 
  $\cG_{\rm dist}=(I_N, \cS, (A_i)_{i\in I_N}, {\pi}^{(N)}_{\rm  dist}, (V_i)_{i\in I_N})$
   with $I_N=\{1,\ldots, N\}$, $\cS=[0,T]\times\prod_{i\in I_N} \sR^{n_i}$ and $ A_i$ is a Borel set in $\sR^{k_i}$.
 Here the set  ${\pi}^{(N)}_{\rm  dist}=\prod_{i\in I_N}{\pi}^{\rm dist}_i$ contains    distributed policy profiles,
 where 
  ${\pi}^{\rm dist}_i$
contains measurable functions $\phi_i:[0,T]\times \sR^{n_i}\to A_i$ such that 
$\sup_{(t,x_i)\in [0,T]\times \sR^{n_i}}\frac{|\phi_i(t,x_i)|}{ 1+|x_i|}<\infty$,
and for all $(t, x_i )\in [0,T]\times  \sR^{n_i}$,
player $i$'s  state dynamics 
\bb
\label{eq:state_policy_distributed}
\d  X^i_s =
b_i(s,X^i_s,\phi_i(s,X^i_s))
\d s +
\sigma_i(s,X^i_s,\phi_i(s,X^i_s))
\d W_s, \quad s\in [t,T]; \quad X^i_t=x_i,
\ee
admits a unique square-integrable weak solution $X^{t,x_i,\phi_i}$ 
 on a probability space $(\Omega,\cF,\sP)$.  
Here $(b_i,\sigma_i):[0,T]\times \sR^{n_i}\times \sR^{k_i}\to \sR^{n_i}\times \sR^{n_i\times n_w}$
are given measurable functions,
and  $W$ is an $n_w$-dimensional Brownian motion on $(\Omega,\cF,\sP)$. 
Let $n_x=\sum_{i=1}^N n_i$ and 
   $n_a=\sum_{i=1}^N k_i$, 
   and 
for each $t\in [0,T]$, $ x=(x_i)_{i\in I_N}\in  \prod_{i\in I_N} \sR^{n_i}$ and $\phi = (\phi_i)_{i\in I_N}\in {\pi}^{(N)}_{\rm  dist}$, 
let 
$X^{t,x,\phi}= (X^{t,x_i,\phi_i})_{i\in I_N}$ be 
the joint   state process,
and 
let 
$\alpha^{t,x,\phi}= (\phi_i(\cdot, X^{t,x_i,\phi_i}))_{i\in I_N}$ be 
the joint  control process.
Then the player $i$'s value function 
 $V_i: [0,T]\times \sR^{n_x}\times {\pi}^{(N)}_{\rm  dist}\to \sR$ is given  by  
\bb 
\label{eq:cost_dist_i}
  V^{t,x}_i(\phi) = \sE^{\mathbb{P}} \left[\int_t^T f_i(s,X^{t,x,\phi}_s,\alpha^{t,x,\phi}_s)\d s + g_i(X^{t,x,\phi}_T)\right],
\ee
where  $f_i :  [0,T]\times \sR^{n_x}\times \sR^{n_a}\to  \sR $
is a   measurable function such that 
$\sup_{(t,x,a)\in [0,T]\times \sR^{n_x}\times \sR^{n_a}}\frac{|f_i(t,x,a)|}{1+|x|^2+|a|^2}<\infty$
and $g_i :  \sR^{n_x}  \to\sR $
is a    measurable function
such that 
$\sup_{x\in   \sR^{n_x}}\frac{ |g_i(x)|}{1+|x|^2}<\infty$.
\end{Definition}

 A distributed  game $\cG_{\rm dist}$  
is an MPG
  if its cost functions admit a potential structure, as stated below.
 \begin{Theorem}
 \label{thm:distributed_game}
      Take the    game   $ \cG_{\rm dist}$     in Definition \ref{example:distributed_game}.
Suppose that  there exist measurable functions $F:[0,T]\times\sR^{n_x}\times \sR^{n_a}\to \sR$,
$G: \sR^{n_x}\to \sR$, 
$U_{f_i}:[0,T]\times \prod_{j\not=i}\sR^{n_j}\times  \prod_{j\not=i}\sR^{k_j}\to \sR$ 
and $U_{g_i}:  \prod_{j\not=i}\sR^{n_j} \to \sR$, $i\in I_N$, such that  
for all $i\in I_N$ and $(t,x, a)\in [0,T]\times  \sR^{n_x}\times \sR^{n_a}$,
\begin{equation}
\label{eq:distributed_cost_decomp}
f_i(t,x,a)= F(t,x,a) +U_{f_i}(t,x_{-i},a_{-i}),
\quad 
g_i(x)= G(x) +U_{g_i}(x_{-i})
\quad 
\end{equation}
with 
$x_{-i}=(x_j)_{j\not =i}$ and $a_{-i}=(a_j)_{j\not =i}$. 
Then the distributed game $ \cG_{\rm dist}$ is an MPG,  with 
a potential function 
$\Phi:[0,T]\times  \sR^{n_x}\times  {\pi}^{(N)}_{\rm  dist}\to \sR$ defined by
\begin{align}
\label{eq:distributed_potential}
  \Phi^{t,x}(\phi) = \sE^{\mathbb{P}} \left[\int_t^T F(s,X^{t,x,\phi}_s,\alpha^{t,x,\phi}_s)\d s + G(X^{t,x,\phi}_T)\right],
\end{align}
with $X^{t,x,\phi}$ and $\alpha^{t,x,\phi}$  defined  in \eqref{eq:cost_dist_i}.
 \end{Theorem}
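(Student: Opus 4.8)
The plan is to reduce everything to the separation characterisation in Theorem \ref{thm:separation_value}. Fix an arbitrary initial state $(t,x)\in[0,T]\times\sR^{n_x}$ and abbreviate $X^\phi:=X^{t,x,\phi}$, $\alpha^\phi:=\alpha^{t,x,\phi}$. Applying Theorem \ref{thm:separation_value} for every $(t,x)$ and every $i\in I_N$, it suffices to exhibit for each $i$ a map $U_i^{t,x}:\pi^{(N)}_{-i}\to\sR$ with
$$
V_i^{t,x}(\phi)=\Phi^{t,x}(\phi)+U_i^{t,x}(\phi_{-i})\qquad\text{for all }\phi\in\pi^{(N)}_{\rm dist},
$$
where $\Phi^{t,x}$ is the candidate potential \eqref{eq:distributed_potential}; since this single $\Phi$ works simultaneously for all $i$ and all $(t,x)$, the conclusion that $\cG_{\rm dist}$ is an MPG with potential $\Phi$ follows at once.

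To produce this decomposition, substitute the cost splitting \eqref{eq:distributed_cost_decomp} into \eqref{eq:cost_dist_i} and use linearity of the expectation to write $V_i^{t,x}(\phi)=\Phi^{t,x}(\phi)+U_i^{t,x}(\phi)$ with
$$
U_i^{t,x}(\phi):=\sE^{\sP}\Big[\int_t^T U_{f_i}\big(s,X^{\phi,-i}_s,\alpha^{\phi,-i}_s\big)\,\d s+U_{g_i}\big(X^{\phi,-i}_T\big)\Big],
$$
where $X^{\phi,-i}:=(X^{t,x_j,\phi_j})_{j\ne i}$ and $\alpha^{\phi,-i}:=(\phi_j(\cdot,X^{t,x_j,\phi_j}))_{j\ne i}$. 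The whole point is now the distributed structure of the state equation \eqref{eq:state_policy_distributed}: the coefficients $(b_j,\sigma_j)$ of agent $j$'s dynamics depend only on $(s,X^j_s,\phi_j(s,X^j_s))$, so for every $j\ne i$ the process $X^{t,x_j,\phi_j}$, and hence the control $\phi_j(\cdot,X^{t,x_j,\phi_j})$, does not involve agent $i$'s policy $\phi_i$ at all. Consequently the joint law of $(X^{\phi,-i},\alpha^{\phi,-i})$ is a function of $(t,x_{-i})$ and $\phi_{-i}$ only, which gives $U_i^{t,x}(\phi)=U_i^{t,x}(\phi_{-i})$, and combined with the previous display this is exactly \eqref{eq:separation_value}; invoking Theorem \ref{thm:separation_value} then finishes the proof. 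Finiteness of $V_i^{t,x}$, $\Phi^{t,x}$ and $U_i^{t,x}$ is a routine integrability check from the quadratic growth of the costs, the linear growth of admissible policies, and the square-integrability of $X^\phi$ in Definition \ref{example:distributed_game}.

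There is no deep obstacle here: the content is linearity of expectation together with the observation that a distributed policy profile makes each agent's state (and realised control) insensitive to the other agents' policies. The one step that warrants a careful sentence is ``the joint law of $(X^{\phi,-i},\alpha^{\phi,-i})$ depends only on $\phi_{-i}$'': since the $X^{t,x_j,\phi_j}$ are a priori only weak solutions (possibly on different probability spaces for different policy profiles), this should be phrased via weak uniqueness of the solution to the subsystem of \eqref{eq:state_policy_distributed} indexed by $j\ne i$ — whose coefficients and initial data genuinely do not see $\phi_i$ — rather than as a pathwise identity.
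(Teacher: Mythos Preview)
Your proposal is correct and follows essentially the same approach as the paper: substitute the cost decomposition \eqref{eq:distributed_cost_decomp} into \eqref{eq:cost_dist_i}, observe that the residual term involves only the decoupled processes $(X^{t,x_j,\phi_j})_{j\ne i}$ and hence is independent of $\phi_i$, and then invoke Theorem \ref{thm:separation_value}. Your additional remark about phrasing the $\phi_i$-independence via weak uniqueness (rather than pathwise) is a careful touch that the paper itself leaves implicit.
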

 
 {

 \begin{Remark}
 \label{rmk:distributed}

A few remarks regarding Theorem  \ref{thm:distributed_game} are in place.

First,  to the best of our knowledge, it is the first  characterization of potential functions for general dynamic distributed games; and   it applies to general distributed games with
    measurable policies and model coefficients defined by \eqref{eq:state_policy_distributed} and \eqref{eq:cost_dist_i}. 
    For instance, it  enables characterizing   potential functions  for  dynamic games in crowd motion modeling, as shown in Example \ref{example:crowd_distributed} below.

Secondly,       it highlights analytically the crucial distinction between static and dynamic potential games. Indeed,
to construct the functions $F$ and $G$ for a dynamic potential games, it is essential to construct associated static potential games by enlarging action sets with players' state variables. In particular,  it is sufficient to construct  
 a  static potential game with  a  potential function $F(t,\cdot)$ according to  \eqref{eq:distributed_cost_decomp} with 
  $f_i(t,\cdot)$ being player $i$'s value function  
 and  $\sR^{n_i}\times \sR^{k_i}$   player $i$'s   action set,
 as well as another static potential game with a potential function $G$ for which  $g_i$ is player $i$'s value function and $\sR^{n_i}$ is player $i$'s action set. 
 
Moreover, it allows for computing   Nash equilibria of a 
 distributed game with heterogeneous players using stochastic control techniques.
 As a special case of Theorem \ref{prop:ne_mpg},
 finding an equilibrium of the game $ \cG_{\rm dist}$ can be translated to a distributed control problem, whose minimizer can be 
constructed using a  stochastic maximum
principle and a Hamilton-Jacobi-Bellman equation;
see   \cite{jackson2023approximately} for the special case where the state dynamics \eqref{eq:state_policy_distributed} has coefficients $b_i(t,x_i,a_i)=a_i$ and $\sigma_i(t,x_i,a_i)\equiv \sigma$.

Finally, it enables employing existing numerical algorithms for high-dimensional control problems (see \cite{germain2021neural} for a comprehensive survey)  to 
 minimize the potential function and compute equilibria   for the game $\cG_{\rm dist}$. 
 The convergence of these algorithms can be ensured by adapting techniques for general control problems to the potential function $\Phi$ (see e.g., \cite{giegrich2024convergence, sethi2024entropy}).
Such equilibrium computations for distributed games via the potential function will be an interesting  future research.

\end{Remark}
 
\begin{Example}[Crowd motion]
\label{example:crowd_distributed}
    Take the    game   $ \cG_{\rm dist}$     in Definition \ref{example:distributed_game}.
    Suppose that    
for all $i,j \in I_N$,
$n_i=n$ and  
there exists $c_i:[0,T]\times \sR^{n}\times \sR^{k_i}\to \sR$,
$h_{ij}:[0,T]\times \sR^n\to \sR$
 and $\ell_i,  r_{ij}:\sR^n\to \sR$
such that for all $(t,x,a)\in [0,T]\times \sR^{n_x}\times \sR^{n_a}$,
\bb
\label{eq:cost_crowd}
f_i(t,x,a) =c_i(t,x_i,a_i)+\sum_{j\in I_N\setminus \{i\}}  h_{ij}(t, x_i-x_j), \quad 
g_i(x) =\ell_i(x_i)+\sum_{j\in I_N\setminus \{i\}}  r_{ij}(x_i-x_j), 
\ee
and $h_{ij}(t, x) = h_{ji}(t, -x)$ 
and $r_{ij}( x) = r_{ji}( -x)$ for all $i\not =j$. 
Then
 $ \cG_{\rm dist}$ is an MPG and a potential function $\Phi$
is given by  
\eqref{eq:distributed_potential}, with 
$F$ and $G$ defined as 
 \begin{align*}
 F(t,x,a)\coloneqq \sum_{i=1}^N c_i(t,x_i,a_i)+\frac{1}{2}\sum_{i,j\in I_N, i\not =j} h_{ij}(t, x_i-x_j),
 \quad G(x) \coloneqq \sum_{i=1}^N  \ell_i( x_i )+\frac{1}{2}\sum_{i,j\in I_N, i\not =j}  r_{ij}(  x_i-x_j).
 \end{align*}
 Note that in lieu of Remark \ref{rmk:distributed}, here
 $ 
U_{f_i} (t,x_{-i},a_{-i}) \coloneqq -  \sum_{j\in I_N\setminus \{i\}} c_j(t,x_j,a_j)
- \frac{1}{2} \sum_{k,\ell\in I_N\setminus \{i\}, k\not =\ell} h_{k\ell}(t, x_k-x_\ell)  
$
and 
$
U_{g_i} (x_{-i}) \coloneqq - \sum_{j\in I_N\setminus \{i\}} \ell_j( x_j)
- \frac{1}{2} \sum_{k,\ell\in I_N\setminus \{i\}, k\not =\ell} r_{k\ell}(  x_k-x_\ell) 
 $ in  \eqref{eq:distributed_cost_decomp}, both of which are independent of player $i$'s state and action.

 This crowd motion game has been studied in 
\cite{aurell2018mean} 
for pedestrian motion modelling,
in \cite{carmona2018probabilistic} for animal flocking, 
and \cite{carmona2023synchronization} for synchronization in 
neuroscience.
We emphasize that the form of  game here generalizes those in 
\cite{aurell2018mean,carmona2018probabilistic,carmona2023synchronization}: 
condition \eqref{eq:cost_crowd}  removes  the homogeneous and the symmetric interaction conditions imposed in their analysis. 
In particular, it allows each player to have different   individual preferences specified by   
$c_i$ and $\ell_i$,  as well as
   to be differently influenced by other players' positions as indicated by the kernels  
   $h_{ij}$ and $r_{ij}$.

\end{Example}

Now,  Theorem \ref{thm:distributed_game} can be extended with explicit  constructions of $F$ and $G$ and hence  explicit   expression of the dynamic potential function $\Phi$,   when the running and terminal costs are twice differentiable. 

 }

 \begin{Theorem}
 \label{thm:distributed_game_differentiable}
      Take the    game   $ \cG_{\rm dist}$     in Definition \ref{example:distributed_game}.
Suppose that 
for all $i\in I_N$ and $t\in [0,T]$,
 $\sR^{n_x} \times \sR^{n_a}\ni  (x,a)\mapsto ( f_i(t,x,a),g_i(x))\in \sR^2$ is twice continuously differentiable, 
 for all $i,j\in I_N$,
\bb
\label{eq:symmetric_hessian_distributed}
 \begin{pmatrix}
\p_{x_ix_j}f_i  & \p_{a_ix_j}f_i  
\\
\p_{x_ia_j}f_i   & \p_{a_ia_j}f_i 
\end{pmatrix}   
=
\begin{pmatrix}
\p_{x_ix_j}f_j   & \p_{a_ix_j }f_j 
\\
 \p_{x_ia_j} f_j     & \p_{a_ia_j}f_j 
\end{pmatrix},
\quad 
\p_{x_ix_j}g_i =\p_{x_ix_j}g_j,
\ee
and  for all $i\in I_N$,
$\sup_{(t,x,a)\in [0,T]\times \sR^{n_x}\times \sR^{n_a}} \frac{|(\p_{(x_i,a_i)}f_i)(t,x,a)|+|(\p_{x_i}g_i)(x)|}{1+|x|+|a|}<\infty
 $.
 Then 
      $\cG_{\rm  dist}$ 
is an MPG and 
a potential
function $\Phi$ is given by \eqref{eq:distributed_potential},
 with $F$ and $G$ defined as 
\begin{align}
\label{eq:F_G_distributed}
\begin{split}
F(t,x,a)& \coloneqq \sum_{i=1}^N \int_0^1
\left(
 \begin{pmatrix} 
\p_{x_i} f_i \\ \p_{a_i} f_i
\end{pmatrix} \big(t,    \hat{x}+r (x-\hat{x}),\hat{a} +r ( a- \hat{a}) \big)
\right)^\top  
  \begin{pmatrix} 
x_i-\hat{x}_{i}  \\   a_i-\hat{a}_i
\end{pmatrix}
 \d r,
\\
G(x) & \coloneqq \sum_{i=1}^N \int_0^1
  (\p_{x_i} g_i) ( \hat{x} +r (x-\hat{x}) )^\top 
(x_i -\hat{x}_i)  \d r 
\end{split}
\end{align}
for some fixed   $(\hat{x},\hat{a}) \in  \sR^{n_x}\times \sR^{n_a}$. 
 \end{Theorem}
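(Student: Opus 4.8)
The plan is to reduce the statement to Theorem~\ref{thm:distributed_game}: it suffices to produce, for each fixed $t\in[0,T]$, a scalar function $F(t,\cdot,\cdot)$ on $\sR^{n_x}\times\sR^{n_a}$ whose partial gradient in agent $i$'s own variables $(x_i,a_i)$ coincides with $(\p_{x_i}f_i,\p_{a_i}f_i)(t,\cdot,\cdot)$ for every $i\in I_N$ \emph{simultaneously}, together with the analogous (and slightly simpler, since there is no $a$- or $t$-dependence) construction for $G$ on $\sR^{n_x}$. To this end I would assemble the $\sR^{n_x+n_a}$-valued map $\mathcal V(t,x,a)$ whose block in the $(x_i,a_i)$-coordinates is the column vector $\big(\p_{x_i}f_i,\p_{a_i}f_i\big)(t,x,a)$; the distributed structure is exactly what makes this stacking well posed, since agent $i$ contributes derivatives only in its own slot. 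Because each $f_i(t,\cdot,\cdot)$ is $C^2$, the Jacobian $D_{(x,a)}\mathcal V(t,\cdot,\cdot)$ exists and is continuous, and the key claim is that it is \emph{symmetric} on all of $\sR^{n_x+n_a}$.

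I would verify this symmetry blockwise. For two coordinate indices lying in the same agent block $i$, the corresponding entry of $D_{(x,a)}\mathcal V$ is a second partial of $f_i$, and symmetry is Clairaut's theorem. For indices in distinct blocks $i\neq j$, the $(i,j)$ block of $D_{(x,a)}\mathcal V$ is $\p_{(x_j,a_j)}\big[\nabla_{(x_i,a_i)}f_i\big]$ and the transpose of the $(j,i)$ block is, after one application of Clairaut inside $f_j$, equal to $\p^2_{(x_i,a_i),(x_j,a_j)}f_j$; hence symmetry of these off-diagonal blocks is precisely the entrywise identity $\p^2_{(x_i,a_i),(x_j,a_j)}f_i=\p^2_{(x_i,a_i),(x_j,a_j)}f_j$, which is exactly hypothesis~\eqref{eq:symmetric_hessian_distributed} (its four scalar blocks spelling out the four mixed cross-partials). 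By the Poincar\'e lemma on the convex domain $\sR^{n_x+n_a}$, a continuous symmetric matrix field is the Jacobian of a gradient, and the canonical potential obtained by integrating $\mathcal V(t,\cdot,\cdot)$ along the segment from $(\hat x,\hat a)$ to $(x,a)$ is exactly the function $F$ in~\eqref{eq:F_G_distributed}; differentiating under the integral sign (legitimate since the integrand is $C^1$ and $[0,1]$ is compact) and using the symmetry of $D_{(x,a)}\mathcal V$ gives $\nabla_{(x_i,a_i)}F(t,x,a)=\nabla_{(x_i,a_i)}f_i(t,x,a)$ for every $i$. Thus $f_i(t,\cdot,\cdot)-F(t,\cdot,\cdot)$ is a $C^1$ function whose partial gradient in $(x_i,a_i)$ vanishes; since $\sR^{n_i+k_i}$ is connected it is constant in $(x_i,a_i)$, so it equals some $U_{f_i}(t,x_{-i},a_{-i})$, which is the first identity in~\eqref{eq:distributed_cost_decomp}. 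Repeating the argument with the $\sR^{n_x}$-valued field built from $(\p_{x_i}g_i)_{i\in I_N}$ and the second identity in~\eqref{eq:symmetric_hessian_distributed} yields $G$ and $U_{g_i}$ with $g_i=G+U_{g_i}$.

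It then remains to check the hypotheses of Theorem~\ref{thm:distributed_game}: that $F,G,U_{f_i},U_{g_i}$ are measurable with the growth needed for the value functions in Definition~\ref{example:distributed_game} to be finite. Measurability of $F$ in $t$ follows from that of $(f_i)$ and measurability of parameter-dependent integrals, and then $U_{f_i}=f_i-F$ is measurable; likewise for $G$ and $U_{g_i}$. For the growth, the assumption $\sup\frac{|\p_{(x_i,a_i)}f_i|+|\p_{x_i}g_i|}{1+|x|+|a|}<\infty$ gives a linear bound on the integrands in~\eqref{eq:F_G_distributed}, hence $|F(t,x,a)|+|G(x)|$ grows at most quadratically in $(x,a)$, and the same bound for $U_{f_i},U_{g_i}$ follows from the stated quadratic growth of $f_i,g_i$; in particular the potential $\Phi$ in~\eqref{eq:distributed_potential} is well defined. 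With~\eqref{eq:distributed_cost_decomp} established, Theorem~\ref{thm:distributed_game} applies and concludes that $\cG_{\rm dist}$ is an MPG with potential $\Phi$ given by~\eqref{eq:distributed_potential}. I expect the only genuinely delicate point to be the index bookkeeping that matches the symmetry of $D_{(x,a)}\mathcal V$ with the block identities~\eqref{eq:symmetric_hessian_distributed}; the Poincar\'e-lemma step and the verification of the measurability and growth conditions are routine.
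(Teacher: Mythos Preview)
Your proposal is correct and follows essentially the same approach as the paper: the paper's proof simply cites \cite[Theorem 1.3.1]{facchinei2003finite} to conclude that, under the symmetry condition \eqref{eq:symmetric_hessian_distributed}, the line-integral formulas \eqref{eq:F_G_distributed} satisfy $\p_{(x_i,a_i)}F=\p_{(x_i,a_i)}f_i$ and $\p_{x_i}G=\p_{x_i}g_i$, and then observes that the residuals $f_i-F$ and $g_i-G$ are independent of $(x_i,a_i)$ and satisfy the integrability requirements of Theorem~\ref{thm:distributed_game}. Your argument unpacks precisely what that cited result is doing---assembling the block vector field $\mathcal V$, verifying the symmetry of its Jacobian blockwise via Clairaut and \eqref{eq:symmetric_hessian_distributed}, and invoking the Poincar\'e lemma on the convex domain---so the two proofs are the same in substance, with yours more self-contained.
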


{

  Theorem \ref{thm:distributed_game_differentiable}
 is reminiscent of
the differential characterizations of static potential games \cite{monderer1996potential, ui2000shapley}. However, 
 as highlighted in   Remark \ref{rmk:distributed}, 
    static potential games and the dynamic ones are distinct. For instance, replacing \eqref{eq:symmetric_hessian_distributed}  by the condition 
$(\partial_{a_ia_j}f_i)(t,x,\cdot)=(\partial_{a_ia_j}f_j)(t,x,\cdot) $ for all $(t,x) $   does not guarantee that 
$ \cG_{\rm dist}$  is an MPG. 

}

In addition, Theorem \ref{thm:distributed_game_differentiable} can be applied  to identify a special class of dynamic games as an MFG  with a quadratic form of potential function, 
 if the costs of the game  are  quadratic and  depend both on 
other players through their 
average behavior.

 \begin{Example}[MPG with quadratic form of potential functions]
\label{prop:distribted_qudratic_MF}
       Take the    game   $ \cG_{\rm dist}$     in Definition \ref{example:distributed_game}.
Suppose that     $n_i=n$ and $k_i=k$
for all $i\in I_N$,
and 
there exists
$\bar{Q}, Q_i: [0,T]\to   \sS^{n}$, $\bar{R}, R_i:  [0,T]\to   \sS^{k}$, $\bar{G}, G_i \in \sS^{n}$, $i\in I_N$, 
$\gamma, \kappa: [0,T]\to \sR$
 and   $\eta \in \sR$
such that  for all $i\in I_N$
and $(t,x,a)\in [0,T]\times \sR^{Nn}\times \sR^{Nk}$,
\begin{align}
\label{eq:distributed_quadratic_MF}
\begin{split}
 f_i(t,x,a)
 &= x_i^\top Q_i(t) x_i  
+ \left(x_i-\gamma(t) \ol{x}^{(N-1)}_{ -i}
\right)^\top \bar{Q}(t)   \left( x_i-\gamma(t) \ol{x}^{(N-1)}_{ -i}
\right)  
\\
&\q 
+ a_i^\top R_i(t) a_i +  \left(a_i-\kappa (t)\ol{a}^{(N-1)}_{ -i}
\right)^\top \bar{R}(t)   \left(a_i-\kappa (t)  \ol{a}^{(N-1)}_{ -i}
\right),
\\
g_i(x)
&=  x_i^\top G_i x_i  + 
\left(x_i - \eta \ol{x}^{(N-1)}_{ -i}
\right)^\top \bar{G}    \left( x_i- \eta \ol{x}^{(N-1)}_{ -i}
\right),  
\end{split}
\end{align}
where $x_i\in \sR^n$ and $a_i\in \sR^k$  are    player $i$'s state and action, respectively, 
and 
$\ol{x}^{(N-1)}_{ -i}\in \sR^n$
and 
$\ol{a}^{(N-1)}_{ -i}\in \sR^k$
are the average state and action of other players defined by:
$$
\ol{x}^{(N-1)}_{ -i} = \frac{1}{N-1}
\sum_{j=1,j\not=i}^N  x_j,
\quad 
 \ol{a}^{(N-1)}_{ -i}= \frac{1}{N-1}
\sum_{j=1,j\not=i}^N  a_j.
$$
Then    $\cG_{\rm  dist}$ 
is an MPG and 
a potential
function $\Phi$ is defined by \eqref{eq:distributed_potential}
 with 
 \begin{align}
\label{eq:F_G_distributed_MF}
F(t,x,a)\coloneqq  x^\top \tilde{Q}(t) x+a^\top \tilde{R}(t) a,
\quad 
G(x)\coloneqq  x^\top \tilde{G} x, 
\end{align}
where  
 \begin{align*}
\tilde{Q}&=\begin{pmatrix}
  Q_{1}+\bar{Q} & -\frac{\gamma\bar{Q}}{N-1} & \dots & -\frac{\gamma\bar{Q}}{N-1}
\\
-\frac{\gamma\bar{Q}}{N-1} &   Q_{2}+\bar{Q}   &\ddots & \vdots
\\
\vdots &\ddots  &\ddots &  -\frac{\gamma\bar{Q}}{N-1}
\\
-\frac{\gamma\bar{Q}}{N-1} & \dots  & -\frac{\gamma\bar{Q}}{N-1} & Q_{N}+\bar{Q}  
 \end{pmatrix},
 \q
\tilde{R}=\begin{pmatrix}
  R_{1}+\bar{R} & -\frac{\kappa\bar{R}}{N-1} & \dots & -\frac{\kappa\bar{R}}{N-1}
\\
-\frac{\kappa\bar{R}}{N-1} &   R_{2}+\bar{R}   &\ddots & \vdots
\\
\vdots &\ddots  &\ddots &  -\frac{\gamma\bar{R}}{N-1}
\\
-\frac{\kappa\bar{R}}{N-1} & \dots  & -\frac{\kappa\bar{R}}{N-1} & R_{N}+\bar{R}  
 \end{pmatrix},
 \\
 \tilde{G}&=\begin{pmatrix}
  G_{1}+\bar{G} & -\frac{\eta \bar{  G}}{N-1}  & \dots & -\frac{\eta \bar{  G}}{N-1}  
\\
-\frac{\eta \bar{  G}}{N-1}     &   G_{2}+\bar{G}   &\ddots & \vdots
\\
\vdots &\ddots  &\ddots & -\frac{\eta \bar{  G}}{N-1} 
\\
- \frac{\eta \bar{  G} }{N-1}   & \dots  & -\frac{\eta \bar{  G}}{N-1}  & G_{N}+\bar{G}  
 \end{pmatrix}.
\end{align*}

We reiterate that this is a game with heterogeneity among   players, who can  have different action sets, different state dynamics  (i.e., 
different $b_i$ and $\sigma_i$),
and   
different 
  dependence on their individual behavior  (i.e.,  different $Q_i, R_i$ and $G_i$).
This is 
in contrast to the classical  $N$-player mean field games in  \cite{bensoussan2016linear,carmona2018probabilistic}, which requires that all players have  homogeneous state dynamics and cost functions.  In particular, even when $N\to \infty$, the matrices $\tilde Q$, $\tilde R$ and $\tilde G$ become diagnolized indicating independence of actions among players, who nevertheless remain heterogeneous. 

 \end{Example}



  \section{Characterization of Differentiable    Potential Game}
\label{sec:characterisation_differentiable_MPG}

   In this section, 
   we propose,  
instead of Theorem \ref{thm:separation_value}, to construct   potential functions   
 based on    derivatives of  value functions, assuming their
regularities with respect to policies.

Let us start by
a  precise definition of the     differentiability  of a (scalar-valued)  function  
with respect to   unilateral  deviations of   policies. 
Recall that 
for each $i\in I_N$,  
$\pi_i$ is a subset of all   measurable functions from $\cS$ to $A_i$.
We denote by 
$\spn{(\pi_i)}$ 
 the vector space of all linear combinations of policies  in $\pi_i$:
 $$
\spn{(\pi_i)}=\left\{
 \sum_{\ell=1}^m \alpha_\ell  {\phi}^{(\ell)} \bigg\vert
m\in \sN,
(\alpha_\ell)_{\ell=1}^m \subset \sR,
(\phi^{(\ell)})_{\ell=1}^m\subset \pi_i
\right\}.
 $$

\begin{Definition} 
\label{def:linear_deri}
Let 
$\pi^{(N)}=\prod_{i\in I_N}\pi_i$
be a convex set 
and   $f: \pi^{(N)}\to \sR$.
For each $i\in I_N$,
we say $f$ has  a linear  derivative 
with respect to $\pi_i$, if there exists 
$\frac{\delta f}{\delta \phi_i}:\pi^{(N)} \times  \spn(\pi_i) \to \sR$,
such that for all $\phi=(\phi_i,\phi_{-i})\in \pi^{(N)}$,
$ \frac{\delta f}{\delta\phi_i} (\phi;\cdot) $
is linear and 
\bb
\label{eq:first_der_def}
\lim_{\eps\downarrow   0 }\frac{  f\big((\phi_i+\eps(\phi'_i-\phi_i),\phi_{-i})\big) -f (\phi) }{  \eps}
= \frac{\delta f}{\delta \phi_i} (\phi ; \phi'_i-\phi_i),
\quad \fa \phi'_i\in \pi_i.
\ee
For each $i,j\in I_N$, 
we say  $f $ 
has    second-order linear derivatives   
 with respect to $\pi_i\times \pi_j$, 
 if     
 (i)  for all $ k\in \{i,j\}$,   $f$ has a  linear   derivative $\frac{\delta f}{\delta \phi_k}  $  with respect to $\pi_k$,
and  (ii) for all $(k,\ell)\in \{(i,j),(j,i) \}$,  
 there exists  
 $\frac{\delta^2 f}{\delta \phi_k\delta\phi_\ell} 
:\pi^{(N)} \times  \spn(\pi_k)\times \spn(\pi_\ell) \to \sR$ 
such that 
for all $\phi \in \pi^{(N)}$,
$\frac{\delta^2 f}{\delta \phi_k\delta\phi_\ell}(\phi,\cdot,\cdot) $
is bilinear
and 
  for all   $\phi'_k\in \spn(\pi_k) $,  $\frac{\delta^2 f}{\delta \phi_k\delta\phi_\ell}(\cdot; 
\phi'_k,\cdot) $
is a linear   derivative of $\frac{\delta f}{\delta \phi_k}(\cdot; \phi'_k)$
 with respect to $\pi_\ell$. 
 We refer  to $\frac{\delta^2 f}{\delta \phi_i\delta\phi_j}$ and 
 $\frac{\delta^2 f}{\delta \phi_j\delta\phi_i}$ 
as second-order linear derivatives of $f$ 
  with respect to $\pi_i\times \pi_j$.
\end{Definition}

\begin{Remark}
Definition \ref{def:linear_deri} allows   $(\pi_i)_{i\in I_N}$ to be    generic  convex sets  of measurable functions, which may not be vector spaces. 
This is important  for     games with stochastic/mixed policies, whose policy class $\pi_i$ consists of  functions mapping the system state to a probability measure over the action set. 
{
In   general, 
$f:\pi^{(N)}\to \sR$ 
  may have multiple  linear derivatives
  since the condition \eqref{eq:first_der_def} may not uniquely determine
the values of $ \frac{\delta f}{\delta \phi_i} (\phi ; \cdot)$ on 
$\spn{(\pi_i)}$, especially  when  $\pi_i$ is not a vector space.}
This non-uniqueness of linear derivatives will not affect our subsequent analysis, 
as   
 our  results hold for  any choices of $   \frac{\delta f}{\delta \phi_i}$ 
satisfying the properties   outlined in Definition \ref{def:linear_deri}.
For a specific game with sufficiently regular coefficients, there exist
natural choices of linear derivatives for value functions,
as  discussed in Sections \ref{sec:sde_probabilistic} and \ref{sec:sde_analytic}.  
 The same remark also applies to the second-order derivatives of $f$.
\end{Remark}

 We then present two lemmas regarding the linear derivative of   $f:\pi^{(N)}\to \sR$,
 which 
are crucial  for
 Theorem \ref{thm:symmetry_value_sufficient}.
Lemma \ref{lemma:derivative_line} shows that 
if $f$ has a linear derivative in $\pi_i$, then $f$ is differentiable along any  line segment within $\pi_i$.
Additionally, Lemma  \ref{lemma:multi-dimension_derivative} 
shows that  if $f$ is differentiable concerning all unilateral deviations, 
then  
  $f$ 
  is differentiable with respect to perturbations of all players'  policies,
and the derivative can be computed via  a chain rule. 
The proofs  
 are given in 
  Appendix 
 \ref{sec:differentiability_lemma}.

 \begin{Lemma}
\label{lemma:derivative_line}
Suppose  
$\pi^{(N)}$ is    convex,
$i\in I_N$, 
and   $f: \pi^{(N)}\to \sR$ 
has  a linear  derivative $ \frac{\delta f}{\delta \phi_i} $ with respect to $\pi_i$.
Let $\phi\in  \pi^{(N)}$, $\phi'_i\in \pi_i$,
and for each $\eps\in [0,1]$, let $\phi^\eps =( \phi_i+\eps(\phi'_i-\phi_i),\phi_{-i})$.
Then 
the map 
$[0,1]\ni \eps\mapsto f(\phi^\eps)\in \sR$
is differentiable and 
$\frac{\d }{\d \eps }f(\phi^\eps)=  \frac{\delta f}{\delta \phi_i} (\phi^\eps ; \phi'_i-\phi_i)
 $
 for all $\eps\in [0,1]$.  
\end{Lemma}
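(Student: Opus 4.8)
The plan is to reduce the assertion about differentiability of the one-variable map $\varepsilon\mapsto f(\phi^\varepsilon)$ to the defining property \eqref{eq:first_der_def} of the linear derivative, applied not at $\varepsilon=0$ but at an arbitrary base point $\varepsilon_0\in[0,1]$. The key observation is that the line segment is itself a line segment ``from any of its points'': for $\varepsilon_0,\varepsilon\in[0,1]$ one has the algebraic identity
\[
\phi_i+\varepsilon(\phi'_i-\phi_i) \;=\; \big(\phi_i+\varepsilon_0(\phi'_i-\phi_i)\big) + (\varepsilon-\varepsilon_0)\big(\phi'_i-\phi_i\big),
\]
so that $\phi^\varepsilon = \big((\phi^{\varepsilon_0})_i + (\varepsilon-\varepsilon_0)(\phi'_i-\phi_i),\,\phi_{-i}\big)$. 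The subtlety is that \eqref{eq:first_der_def} is stated only for perturbations of the form $\phi'_i-\phi_i$ with $\phi'_i\in\pi_i$ (a one-sided limit $\varepsilon\downarrow 0$), whereas here the increment $\varepsilon-\varepsilon_0$ can have either sign and the ``target'' point $\phi'_i$ has been replaced by $\phi^{\varepsilon_0}_i\in\pi_i$, which need not be an extreme point of $\pi_i$. So the main work is to massage the difference quotient at $\varepsilon_0$ into a form to which \eqref{eq:first_der_def} applies, for both the right and left derivatives, and then to exploit linearity of $\tfrac{\delta f}{\delta\phi_i}(\phi;\cdot)$ to identify the two one-sided limits.

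Here is the order I would carry this out. First, fix $\varepsilon_0\in[0,1]$ and compute the right derivative: for $h>0$ small (so that $\varepsilon_0+h\le 1$), write
\[
\frac{f(\phi^{\varepsilon_0+h})-f(\phi^{\varepsilon_0})}{h}
=\frac{f\big((\phi^{\varepsilon_0}_i+h(\phi'_i-\phi_i),\phi_{-i})\big)-f(\phi^{\varepsilon_0})}{h}.
\]
Since $\pi^{(N)}$ is convex and $\phi^{\varepsilon_0}_i,\phi'_i,\phi_i\in\pi_i$, the perturbation $\phi'_i-\phi_i$ is admissible in the sense of \eqref{eq:first_der_def} at the base point $\phi^{\varepsilon_0}$ provided we can write $\phi^{\varepsilon_0}_i+h(\phi'_i-\phi_i)$ as $\phi^{\varepsilon_0}_i+h(\psi_i-\phi^{\varepsilon_0}_i)$ for some $\psi_i\in\pi_i$; indeed taking $\psi_i := \phi^{\varepsilon_0}_i+(\phi'_i-\phi_i)=\phi_i+(\varepsilon_0+1)(\phi'_i-\phi_i)$ need not lie in $\pi_i$, so instead I would rescale: for $h\in(0,1]$ the convex combination $\phi^{\varepsilon_0}_i+h(\phi'_i-\phi_i)$ is not automatically of the required form either. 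The clean fix is to apply \eqref{eq:first_der_def} at $\phi^{\varepsilon_0}$ with the specific admissible target policy obtained as follows: for $h\in(0,1]$, $\phi_i+(\varepsilon_0+h(1-\varepsilon_0))(\phi'_i-\phi_i)$ is a convex combination of $\phi^{\varepsilon_0}_i$ and $\phi'_i$, hence lies on the segment; reparametrizing $\varepsilon=\varepsilon_0+h(1-\varepsilon_0)$ turns the right-derivative-at-$\varepsilon_0$ into the derivative-at-$0$ of $h\mapsto f\big((\phi^{\varepsilon_0}_i+h(\phi'_i-\phi^{\varepsilon_0}_i),\phi_{-i})\big)$, to which \eqref{eq:first_der_def} applies directly with target $\phi'_i$, yielding right derivative $(1-\varepsilon_0)\,\tfrac{\delta f}{\delta\phi_i}(\phi^{\varepsilon_0};\phi'_i-\phi^{\varepsilon_0}_i)$. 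Then using $\phi'_i-\phi^{\varepsilon_0}_i=(1-\varepsilon_0)(\phi'_i-\phi_i)$ together with linearity of $\tfrac{\delta f}{\delta\phi_i}(\phi^{\varepsilon_0};\cdot)$ in the increment, this simplifies (for $\varepsilon_0<1$) to $\tfrac{\delta f}{\delta\phi_i}(\phi^{\varepsilon_0};\phi'_i-\phi_i)$, after dividing by the factor $(1-\varepsilon_0)$ coming from $dh/d\varepsilon$.

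Second, I would handle the left derivative at $\varepsilon_0$ (for $\varepsilon_0>0$) by the symmetric trick: for $h\in(0,1]$, $\phi_i+(\varepsilon_0-h\varepsilon_0)(\phi'_i-\phi_i)=\phi^{\varepsilon_0}_i+h(\phi_i-\phi^{\varepsilon_0}_i)$ is a convex combination of $\phi^{\varepsilon_0}_i$ and $\phi_i\in\pi_i$, so \eqref{eq:first_der_def} applies at $\phi^{\varepsilon_0}$ with target $\phi_i$, giving the one-sided limit $\varepsilon_0\,\tfrac{\delta f}{\delta\phi_i}(\phi^{\varepsilon_0};\phi_i-\phi^{\varepsilon_0}_i)$; dividing by $-\varepsilon_0$ (since $\varepsilon$ decreases as $h$ increases) and using $\phi_i-\phi^{\varepsilon_0}_i=-\varepsilon_0(\phi'_i-\phi_i)$ together with linearity, the left derivative of $\varepsilon\mapsto f(\phi^\varepsilon)$ at $\varepsilon_0$ equals $\tfrac{\delta f}{\delta\phi_i}(\phi^{\varepsilon_0};\phi'_i-\phi_i)$ as well. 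Since the left and right one-sided limits coincide at every interior $\varepsilon_0\in(0,1)$ (and the appropriate one-sided derivative exists and has the claimed value at the endpoints $\varepsilon_0\in\{0,1\}$), the map $\varepsilon\mapsto f(\phi^\varepsilon)$ is differentiable on $[0,1]$ with $\tfrac{d}{d\varepsilon}f(\phi^\varepsilon)=\tfrac{\delta f}{\delta\phi_i}(\phi^\varepsilon;\phi'_i-\phi_i)$, as claimed.

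The main obstacle is purely bookkeeping: \eqref{eq:first_der_def} is a one-sided ($\varepsilon\downarrow 0$) statement about perturbations toward elements of $\pi_i$, so one cannot naively ``shift the base point'' without re-expressing both the left and right increments as honest convex perturbations toward $\phi'_i$ (for the right side) and toward $\phi_i$ (for the left side), and then absorbing the resulting scalar factors via linearity of the derivative in its increment argument. Once the correct reparametrizations $\varepsilon=\varepsilon_0+h(1-\varepsilon_0)$ and $\varepsilon=\varepsilon_0-h\varepsilon_0$ are in place, everything reduces to a direct application of the definition and the chain rule in one real variable, so no genuine analytic difficulty remains.
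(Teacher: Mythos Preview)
Your approach is essentially identical to the paper's: both proofs fix $\varepsilon_0\in(0,1)$, reparametrize the right increment toward $\phi'_i$ via $\phi^{\varepsilon_0+\varepsilon}=(\phi^{\varepsilon_0}_i+\tfrac{\varepsilon}{1-\varepsilon_0}(\phi'_i-\phi^{\varepsilon_0}_i),\phi_{-i})$ and the left increment toward $\phi_i$ via $\phi^{\varepsilon_0-\varepsilon}=(\phi^{\varepsilon_0}_i+\tfrac{\varepsilon}{\varepsilon_0}(\phi_i-\phi^{\varepsilon_0}_i),\phi_{-i})$, apply \eqref{eq:first_der_def} at the base point $\phi^{\varepsilon_0}$, and then use linearity of $\tfrac{\delta f}{\delta\phi_i}(\phi^{\varepsilon_0};\cdot)$ together with $\phi'_i-\phi^{\varepsilon_0}_i=(1-\varepsilon_0)(\phi'_i-\phi_i)$ and $\phi_i-\phi^{\varepsilon_0}_i=-\varepsilon_0(\phi'_i-\phi_i)$. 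One bookkeeping slip: your intermediate chain-rule factors are inverted (the right $h$-derivative equals $\tfrac{\delta f}{\delta\phi_i}(\phi^{\varepsilon_0};\phi'_i-\phi^{\varepsilon_0}_i)$, and the $\varepsilon$-derivative is obtained by \emph{dividing} by $1-\varepsilon_0$, not multiplying), but this does not affect the final identity, which you state correctly.
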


 \begin{Lemma}
 \label{lemma:multi-dimension_derivative}
 Suppose  
$\pi^{(N)} $ is   convex  and 
for all $i\in I_N$,
 $f: \pi^{(N)}\to \sR$
 has  a linear  derivative $ \frac{\delta f}{\delta \phi_i} $ with respect to $\pi_i$ 
 such that  
 for all  
  $z,\phi\in \pi^{(N)}$ and     $\phi'_i\in \pi_i $ ,  
$
[0,1]^N\ni {\eps}\mapsto  \frac{\delta f}{\delta \phi_i}(z+\eps\cdot (\phi-z) ;\phi'_i)
$
is continuous at $0$,  
where  
 $ z+\eps\cdot (\phi-z)\coloneqq (z_i+\eps_i({\phi}_{i}-z_i))_{i\in I_N}$.
 Then for all  $z,\phi\in \pi^{(N)}$,  
the map 
  $[0,1]\ni r\mapsto  f(z+r( \phi -z))\in \sR$ 
is differentiable  
and  
$
 \frac{\d}{\d r}   f(z+r( \phi -z)) 
 =\sum_{j=1}^N \frac{\delta  f}{ \delta\phi_j}
 (z+r( \phi-z);   \phi_j-z_j)$.  
 \end{Lemma}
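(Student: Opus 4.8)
The plan is to fix $z,\phi\in\pi^{(N)}$, set $\psi(r)\coloneqq z+r(\phi-z)$ — so the $j$-th coordinate is $\psi_j(r)=(1-r)z_j+r\phi_j$, which lies in $\pi_j$ because $\pi^{(N)}$ convex forces each $\pi_j$ convex — and prove that $g(r)\coloneqq f(\psi(r))$ is differentiable on $[0,1]$ by computing its one-sided derivatives through a coordinate-by-coordinate telescoping. For $r\in[0,1)$ and small $h>0$ with $r+h\le1$, I would introduce the hybrid points $\zeta^0=\psi(r)$ and $\zeta^k=(\psi_1(r+h),\dots,\psi_k(r+h),\psi_{k+1}(r),\dots,\psi_N(r))$ for $k=1,\dots,N$; each $\zeta^k\in\pi^{(N)}$ by convexity, $\zeta^N=\psi(r+h)$, and $f(\psi(r+h))-f(\psi(r))=\sum_{k=1}^N\bigl(f(\zeta^k)-f(\zeta^{k-1})\bigr)$.

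Next I would analyze each term $f(\zeta^k)-f(\zeta^{k-1})$. The two points differ only in the $k$-th slot, and an elementary rescaling gives $\psi_k(r+h)=\psi_k(r)+\tfrac{h}{1-r}\bigl(\phi_k-\psi_k(r)\bigr)$ with $\phi_k\in\pi_k$; hence the function obtained from $f$ by fixing every slot but the $k$-th at its $\zeta^{k-1}$-value and moving the $k$-th slot along the segment from $\psi_k(r)$ to $\phi_k$ is covered by Lemma \ref{lemma:derivative_line} and is differentiable on $[0,1]$. The mean value theorem then produces $\theta_k=\theta_k(h)\in(0,\tfrac{h}{1-r})$ with $f(\zeta^k)-f(\zeta^{k-1})=\tfrac{h}{1-r}\,\tfrac{\delta f}{\delta\phi_k}\bigl(P_k(\theta_k);\phi_k-\psi_k(r)\bigr)$, where $P_k(\eps)$ is the hybrid point carrying $\psi_j(r+h)$ in slots $j<k$, $\psi_k(r)+\eps(\phi_k-\psi_k(r))$ in slot $k$, and $\psi_j(r)$ in slots $j>k$. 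Summing and dividing by $h$ writes the forward difference quotient of $g$ at $r$ as $\sum_{k=1}^N\tfrac{1}{1-r}\tfrac{\delta f}{\delta\phi_k}\bigl(P_k(\theta_k);\phi_k-\psi_k(r)\bigr)$.

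The crucial step is the passage $h\downarrow0$. I would observe that $P_k(\theta_k(h))=\psi(r)+\eps^{(k,h)}\cdot(\phi-\psi(r))$ for some $\eps^{(k,h)}\in[0,1]^N$ (entries $\tfrac{h}{1-r}$ for $j<k$, $\theta_k(h)$ for $j=k$, $0$ for $j>k$), which tends to $0$ as $h\downarrow0$; the continuity hypothesis — applied after splitting the direction $\phi_k-\psi_k(r)\in\spn(\pi_k)$ into the $\pi_k$-valued pieces $\phi_k$ and $\psi_k(r)$ and invoking linearity of $\tfrac{\delta f}{\delta\phi_k}$ in its direction argument — then forces $\tfrac{\delta f}{\delta\phi_k}(P_k(\theta_k);\phi_k-\psi_k(r))\to\tfrac{\delta f}{\delta\phi_k}(\psi(r);\phi_k-\psi_k(r))$. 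Since $\phi_k-\psi_k(r)=(1-r)(\phi_k-z_k)$, linearity absorbs the factor $\tfrac{1}{1-r}$ and yields the right derivative of $g$ at $r$ equal to $\sum_{k=1}^N\tfrac{\delta f}{\delta\phi_k}(\psi(r);\phi_k-z_k)$. The left derivative at $r\in(0,1]$ comes from the same argument run with $h<0$: one uses the endpoint $z_k\in\pi_k$ in place of $\phi_k$ (so $\psi_k(r+h)=\psi_k(r)+\tfrac{|h|}{r}(z_k-\psi_k(r))$), the continuity hypothesis with $z$ playing the role of $\phi$, and the identity $z_k-\psi_k(r)=-r(\phi_k-z_k)$; the sign flips from $|h|/h=-1$ and from this identity cancel, and the left derivative equals the same expression. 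As only the right derivative is needed at $r=0$ and only the left at $r=1$, this establishes differentiability of $g$ on $[0,1]$ with $g'(r)=\sum_{j=1}^N\tfrac{\delta f}{\delta\phi_j}(z+r(\phi-z);\phi_j-z_j)$, which is the assertion. The main obstacle is not conceptual but one of careful bookkeeping: re-parametrizing each coordinate segment so Lemma \ref{lemma:derivative_line} applies with a genuine endpoint in $\pi_k$ rather than in the non-convex span, and then matching each perturbed evaluation point $P_k(\theta_k)$ exactly to the form $w+\eps\cdot(v-w)$ demanded by the continuity hypothesis while tracking the $1-r$ versus $r$ rescalings and the signs near the two endpoints.
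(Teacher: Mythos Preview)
Your proposal is correct and follows essentially the same route as the paper: a coordinate-by-coordinate telescoping of $f(\psi(r+h))-f(\psi(r))$, Lemma~\ref{lemma:derivative_line} plus the mean value theorem on each coordinate segment, and the continuity assumption to pass to the limit. The paper carries this out only for $N=2$ at $r=0$ and then appeals to the observation that continuity at $0$ can be upgraded to continuity on all of $[0,1]^N$ (by an argument like that of Lemma~\ref{lemma:derivative_line}) to handle general $r$; you instead re-anchor directly at $\psi(r)$, writing $P_k(\theta_k(h))=\psi(r)+\eps^{(k,h)}\cdot(\phi-\psi(r))$ so the hypothesis applies verbatim at the new base point, and you treat the left derivative symmetrically with base $\psi(r)$ and target $z$. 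Your bookkeeping of the $(1-r)$ and $r$ rescalings, the splitting of the direction $\phi_k-\psi_k(r)$ into $\pi_k$-valued pieces to match the hypothesis, and the sign tracking for the left derivative are all sound.
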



We now     show that  if the value functions of a dynamics  game are sufficiently regular and have a symmetric Jacobian,  then this    game
  is a potential game, and its   dynamic potential function  can be constructed via the   linear derivative  of its value function.

 \begin{Theorem}
 \label{thm:symmetry_value_sufficient}
 Let 
 $\cG=(I_N, \cS, (A_i)_{i\in I_N}, \pi^{(N)}, (V_i)_{i\in I_N})$
 be a  game whose set of policy profiles  $\pi^{(N)}$
  is convex. 
 Suppose that for some $s_0\in \cS$ and for all  $i,j\in I_N$, 
the value function  $  V^{s_0}_i$ has   second-order linear derivatives with respect to 
$\pi_i\times \pi_j$ such that 
 for all 
 $ z=(z_j)_{j\in I_N},
 \phi=(\phi_j)_{j\in I_N}\in \pi^{(N)}$, 
  $\phi'_i,\tilde{\phi}'_i\in \pi_i$  
and $ {\phi}''_j \in \pi_j$, 
\begin{enumerate}[(1)]
 \item
 \label{item:integrable_p-Vi}
 (Boundedness.)
 $ \sup_{r,\eps\in [0,1]}\left|  \frac{\delta^2 V_i^{s_0}}{\delta\phi_i\delta\phi_j}\big(z+r( \phi^\eps-z); {\phi}'_i,{\phi}''_j\big)  \right|<\infty$,
where
 $\phi^\eps\coloneqq (\phi_i+\eps(\tilde{\phi}'_{i}-\phi_i),\phi_{-i})$; 
\item
\label{item:joint_continuity}
(Continuity.)
$[0,1]^N\ni {\eps}\mapsto  \frac{\delta^2 V_i^{s_0}}{\delta \phi_i\phi_j}(z+\eps\cdot (\phi-z) ;\phi'_i, \phi''_j)$
is continuous at $0$,  
where  
 $ z+\eps\cdot (\phi-z)\coloneqq (z_i+\eps_i({\phi}_{i}-z_i))_{i\in I_N}$;

  \item
  \label{item:symmetry-Vi} 
(Symmetric Jacobian.)
$\frac{\delta^2 V_i^{s_0}}{\delta\phi_i\delta\phi_j}
 (  \phi; \phi'_i, \phi''_j)=\frac{\delta^2 V_j^{s_0}}{\delta\phi_j\delta\phi_i}(  \phi ;\phi''_j, \phi'_i).
$
\end{enumerate}
Then     $\cG$ with initial state $s_0$  is a CLPG
and for any $z\in \pi^{(N)}$,
$\Phi^{s_0}:\pi^{(N)}\to\sR$ defined by
 \bb
 \label{eq:potential_variation}
 \Phi^{s_0}(\phi)=\int_0^1\sum_{j=1}^N  \frac{\delta V_j^{s_0}}{\delta\phi_j} (z+r(\phi-z);\phi_j-z_j) \d r
\ee
is  a potential function. 
If the above conditions hold for all $s_0\in \cS$,
then $\cG$ is an MPG
with a potential function $  (s,\phi)\mapsto  \Phi^{s}(\phi)$ defined as  in \eqref{eq:potential_variation}. 
\end{Theorem}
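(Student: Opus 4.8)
\emph{Strategy.} By Theorem~\ref{thm:separation_value} it suffices to prove that, for every $i\in I_N$, the map $V_i^{s_0}-\Phi^{s_0}$ depends on $\phi_{-i}$ only. Since $\pi^{(N)}=\prod_{j\in I_N}\pi_j$ is convex, each $\pi_i$ is convex, so by Lemma~\ref{lemma:derivative_line} it is enough to show that $\Phi^{s_0}$ admits $\frac{\delta V_i^{s_0}}{\delta\phi_i}$ as a linear derivative with respect to $\pi_i$: then $V_i^{s_0}-\Phi^{s_0}$ has identically vanishing linear derivative with respect to $\pi_i$, hence is constant along every segment in the $\phi_i$-variable, hence (by convexity of $\pi_i$) is independent of $\phi_i$, and $U_i^{s_0}(\phi_{-i})\coloneqq(V_i^{s_0}-\Phi^{s_0})(\phi_i,\phi_{-i})$ realises the decomposition \eqref{eq:separation_value}. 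That $\Phi^{s_0}$ is well defined and finite follows because, by Lemma~\ref{lemma:multi-dimension_derivative} together with Conditions (1)--(2) (with $\eps=0$), the map $r\mapsto\sum_{j}\frac{\delta V_j^{s_0}}{\delta\phi_j}(z+r(\phi-z);\phi_j-z_j)$ is differentiable, in particular bounded, on $[0,1]$.

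\emph{Main computation.} Fix $i\in I_N$, $\phi\in\pi^{(N)}$, $\phi'_i\in\pi_i$, and write $w\coloneqq\phi'_i-\phi_i\in\spn(\pi_i)$, $\psi^r\coloneqq z+r(\phi-z)$, and $\phi^\eps\coloneqq(\phi_i+\eps w,\phi_{-i})$. The $j$-th coordinate of $z+r(\phi^\eps-z)$ is $\psi^r_j$ for $j\ne i$ and $\psi^r_i+\eps r w$ for $j=i$; moreover $\psi^r_i+\eps r w=(1-r)z_i+r(1-\eps)\phi_i+\eps r\phi'_i\in\pi_i$ for all $r,\eps\in[0,1]$, so every base point occurring below lies in $\pi^{(N)}$ and Definition~\ref{def:linear_deri} and Lemmas~\ref{lemma:derivative_line}--\ref{lemma:multi-dimension_derivative} are applicable. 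Writing $\Phi^{s_0}(\phi^\eps)=\int_0^1 H(r,\eps)\,\d r$ with $H(r,\eps)\coloneqq\sum_{j=1}^N\frac{\delta V_j^{s_0}}{\delta\phi_j}(z+r(\phi^\eps-z);\phi^\eps_j-z_j)$, separating the $j\ne i$ and $j=i$ summands, using (bi)linearity of the derivatives in their direction slots, and differentiating in $\eps$ via Lemma~\ref{lemma:derivative_line} applied to the first-order derivatives $\frac{\delta V_j^{s_0}}{\delta\phi_j}(\cdot;\cdot)$ along the $\pi_i$-segment $\eps\mapsto\psi^r_i+\eps r w$, one obtains
\[
\partial_\eps H(r,\eps)\big|_{\eps=0}
= r\sum_{j=1}^N\frac{\delta^2 V_j^{s_0}}{\delta\phi_j\delta\phi_i}\big(\psi^r;\phi_j-z_j,w\big)+\frac{\delta V_i^{s_0}}{\delta\phi_i}\big(\psi^r;w\big).
\]
Applying the symmetry Condition (3) — including the case $j=i$, which asserts symmetry of $\frac{\delta^2 V_i^{s_0}}{\delta\phi_i\delta\phi_i}$ in its two direction slots — to rewrite $\frac{\delta^2 V_j^{s_0}}{\delta\phi_j\delta\phi_i}(\psi^r;\phi_j-z_j,w)=\frac{\delta^2 V_i^{s_0}}{\delta\phi_i\delta\phi_j}(\psi^r;w,\phi_j-z_j)$, and then using the chain rule of Lemma~\ref{lemma:multi-dimension_derivative} — whose continuity hypothesis is exactly Condition (2), extended from $\pi_i$ to $\spn(\pi_i)$ by linearity — to identify $\sum_{j=1}^N\frac{\delta^2 V_i^{s_0}}{\delta\phi_i\delta\phi_j}(\psi^r;w,\phi_j-z_j)=\frac{\d}{\d r}\frac{\delta V_i^{s_0}}{\delta\phi_i}(\psi^r;w)$, the right-hand side collapses to the exact $r$-derivative $\partial_\eps H(r,\eps)\big|_{\eps=0}=\frac{\d}{\d r}\!\big[r\,\frac{\delta V_i^{s_0}}{\delta\phi_i}(\psi^r;w)\big]$.

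\emph{Conclusion.} Condition (1), with $\tilde\phi'_i=\phi'_i$, is tailored so that the second-order derivatives appearing in $\partial_\eps H(r,\eta)$ are bounded uniformly over $r,\eta\in[0,1]$; combined with an elementary bound on the stray first-order term $\frac{\delta V_i^{s_0}}{\delta\phi_i}(z+r(\phi^\eta-z);w)$ obtained by integrating its $r$-derivative along $r\mapsto z+r(\phi^\eta-z)$ (again via Lemma~\ref{lemma:multi-dimension_derivative} and Condition (1)), the difference quotients $\eps^{-1}\big(H(r,\eps)-H(r,0)\big)$ are uniformly dominated, so dominated convergence justifies differentiating under the integral:
\[
\lim_{\eps\downarrow 0}\frac{\Phi^{s_0}(\phi^\eps)-\Phi^{s_0}(\phi)}{\eps}
=\int_0^1\frac{\d}{\d r}\Big[r\,\frac{\delta V_i^{s_0}}{\delta\phi_i}(\psi^r;w)\Big]\d r
=\frac{\delta V_i^{s_0}}{\delta\phi_i}(\phi;\phi'_i-\phi_i),
\]
using $\psi^1=\phi$ and that the bracketed quantity vanishes at $r=0$. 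As $\frac{\delta V_i^{s_0}}{\delta\phi_i}(\phi;\cdot)$ is linear on $\spn(\pi_i)$, this shows $\frac{\delta V_i^{s_0}}{\delta\phi_i}$ is a linear derivative of $\Phi^{s_0}$ with respect to $\pi_i$, and the Strategy paragraph then gives that $\cG$ with initial state $s_0$ is a CLPG with potential $\Phi^{s_0}$. For the MPG statement one runs the same argument at every $s_0\in\cS$ and takes $(s,\phi)\mapsto\Phi^s(\phi)$ as the potential.

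\emph{Main obstacle.} The conceptual heart — that Condition (3) turns the $\eps$-derivative of the integrand into an exact $r$-derivative — is the ``Poincar\'{e} lemma for potential games'' and is short once the derivative calculus is in place. The delicate part is the bookkeeping in the possibly non-vector-space policy classes: checking that every intermediate base point and perturbation stays inside $\pi^{(N)}$ so that Definition~\ref{def:linear_deri} and Lemmas~\ref{lemma:derivative_line}--\ref{lemma:multi-dimension_derivative} apply, tracking precisely which direction slot of each second-order derivative is which (so the symmetry of Condition (3) is used correctly, especially in the $j=i$ term), and justifying the interchange of $\partial_\eps$ with $\int_0^1\d r$ from the rather specific boundedness Condition (1).
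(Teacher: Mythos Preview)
Your proof is correct and follows essentially the same approach as the paper: you compute $\partial_\eps\Phi^{s_0}(\phi^\eps)|_{\eps=0}$ by differentiating under the integral, use Condition~(3) together with the chain rule of Lemma~\ref{lemma:multi-dimension_derivative} to recognise the integrand as the exact $r$-derivative $\frac{\d}{\d r}\big[r\,\frac{\delta V_i^{s_0}}{\delta\phi_i}(\psi^r;w)\big]$, and conclude via the fundamental theorem of calculus. The only cosmetic differences are that the paper phrases the final step as a direct FTC identity $V_i^{s_0}(\phi'_i,\phi_{-i})-V_i^{s_0}(\phi_i,\phi_{-i})=\Phi^{s_0}(\phi'_i,\phi_{-i})-\Phi^{s_0}(\phi_i,\phi_{-i})$ rather than invoking Theorem~\ref{thm:separation_value}, and bounds the stray first-order term by the mean value theorem in $\eps$ rather than by integrating in $r$; both organisations are equivalent.
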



{
\begin{Remark}
\label{rmk:differential_characterization}

Theorem \ref{thm:symmetry_value_sufficient} generalizes the differential characterization for static potential games in \cite{monderer1996potential} to dynamic games. 
Indeed, 
 according to \cite[Theorem 4.5]{monderer1996potential}, a  static game
 is a static potential game 
if $ {\partial^2_{a_ia_j} V_i} = {\partial^2_{a_ja_i} V_j} $ for all $i,j\in I_N$, and for any $z\in \prod_{i=1}^N A_i $, a potential function is given by
$\Phi(a) =\int_0^1 \sum_{i=1}^N ({\partial_{a_i} V_i}) (z+r( a- z))(a_i- z_i)\d r,
$
with ${\partial_{a_i} V_i}$ and ${\partial^2_{a_ia_j} V_i} $ being the classical derivatives of $V_i$
when the action sets are either intervals \cite{monderer1996potential, ui2000shapley} or finite-dimensional spaces \cite{leonardos2022global,arefizadeh2023characterization}. In comparison, 
Theorem  \ref{thm:symmetry_value_sufficient} extends the characterization by replacing the classical derivative by the   linear derivative with respect to policies, given by $\frac{\delta V_i}{\delta a_i}(a; a'_i)\coloneqq ({\partial_{a_i} V_i})  (a) a'_i $  and  $\frac{\delta^2 V_i}{\delta a_i\delta a_j}( a; a'_i, a''_j)\coloneqq ({\partial^2_{a_ia_j} V_i})  ( a) a'_i a''_j $.


In fact,  the main technical challenge in establishing an analogous differential characterization for the dynamic game is defining an appropriate notion of the derivative of the value function with respect to policies, when the set of admissible policies $\pi^{(N)}$ may not form a vector space and lacks natural norms to define differentiability. Consequently, one cannot straightforwardly replace the classical derivatives in \cite{monderer1996potential} with  
Fr\'{e}chet derivatives
to characterize dynamic potential games. For instance, in continuous-time problems with continuous state spaces, determining the differentiability of value functions in Markov policies is a well-known challenge (see \cite{pradhan2024continuity, pradhan2024near} for some results on continuity). 

Our approach is to propose  the linear derivatives of value functions, defined along a line segment connecting two policies.  
The advantage of this linear derivative for dynamic games is to avoid   introducing a topology on $\pi^{(N)}$ and accommodate any convex policy classes. 

For continuous-time stochastic games with state dynamics governed by controlled diffusions, linear derivatives exist and can be easily computed under mild regularity assumptions on the policy classes and model coefficients. For such games, we will provide detailed  
analysis for    differentiable  policies and coefficients in Section \ref{sec:sde_probabilistic}  and for nondifferentiable policies and coefficients  in Section \ref{sec:sde_analytic}.
  
\end{Remark}}

\subsection{Probabilistic  Characterization of  Continuous-time   Potential Game}
\label{sec:sde_probabilistic}

Consider   the   game 
 $\cG_{\rm prob}=(I_N, \cS, (A_i)_{i\in I_N}, \pi^{(N)}, (V_i)_{i\in I_N})$,
   where   $I_N=\{1,\ldots, N\}$,
  $\cS= [0,T]\times \sR^{n_x}$
with   $T>0$ and  $n_x\in \sN$;
 $A_i\subset \sR^{k_i}$ with  $k_i\in \sN$  is player $i$'s action set; 
  $\pi^{(N)}= \prod_{i\in I_N} \pi_i$
  is the set of   admissible policy profiles, 
  where   player $i$'s   policy class    $\pi_i$ is     defined by:
   \begin{equation} 
 \label{eq:policy_probabilistic}
\pi_i \coloneqq 
\left\{ 
\phi: [0,T]\times \sR^{n_x}\to A_i
\,\middle\vert\, 
\begin{aligned}
&\textnormal{$\phi$ is   measurable and for all $t\in [0,T]$,  $ x\mapsto \phi(t,x)$ is}
\\
&\textnormal{twice continuously differentiable and }
\\
&
\textnormal{$\|\phi(\cdot,0)\|_0 +\| \p_x \phi\|_0+\| \p_{xx} \phi\|_0<\infty $}
%
\end{aligned}
\right\}.
\end{equation}
Let $n_a=\sum_{i\in I_N}k_i$ and $A^{(N)}=\prod_{i\in I_N}A_i\subset \sR^{n_a}$ 
be the set of   action profiles of all players. 
Let $(\Omega, \cF,   \sP)$
be a  probability space supporting    an $n_w$-dimensional    Brownian motion $W$. 
 For each  $i\in I_N$,   player $i$'s value function
 $ V_i : [0,T]\times \sR^{n_x}\times  \pi^{(N)}\to \sR$
is given by: 
for all    $(t,x)\in [0,T]\times \sR^{n_x}$ and $\phi\in \pi^{(N)}$, 
\bb\label{eq:cost_policy}
V_i^{t,x}(\phi)\coloneqq   \sE \left[\int_t^T f_i(s,X^{t,x,\phi}_s, \phi(s,X^{t,x,\phi}_s))\d s + g_i(X^{t,x,\phi}_T)\right],
\ee
where $f_i : [0,T]\times \sR^{n_x}\times   \sR^{n_a}\to \sR$
and $g_i :  \sR^{n_x} \to \sR$ 
are given   cost functions,
$X^{t,x,\phi}$ is the state process governed by    the  following     dynamics:
\bb
\label{eq:state_policy}
\d  X_s =B(t,X_s, \phi (s,X_s)) \d s +\Sigma(s,X_s, \phi (s,X_s))\d W_s, \quad s\in [t,T]; \quad X_t=x,
\ee
and  $( B,\Sigma): [0,T]\times \sR^{n_x}\times  \sR^{n_a}\to \sR^{n_x}\times  \sR^{n_x\times n_w}  $ 
are given coefficients.
Note that in \eqref{eq:cost_policy},  we write 
$V_i^{t,x}(\phi)\coloneqq V_i(t,x, \phi)$ ,
as we will analyze the     derivatives of value functions for each fixed 
initial condition $(t,x)$ (see Theorem \ref{thm:differential_MPG_proba}).

{ 
\begin{Remark}
\label{rmk:pde_C2_policy}
It has been shown in \cite{pradhan2024near} that under suitable assumptions, smooth policies are nearly optimal, thus justifying the restriction of the optimization problem to the space of smooth policies. Consequently, one can  
 characterize value function's linear derivatives via suitable sensitivity processes derived from a chain rule. 
Moreover, to facilitate algorithm design, most existing works restrict admissible policies to some parameterized families of smooth functions  (see \cite{leonardos2022global} and references therein). 
Finally, 
 this differentiability restriction   in \eqref{eq:policy_probabilistic}  can be removed when characterizing the linear derivatives via a PDE approach,
 as will be shown  in Section \ref{sec:sde_analytic}.

\end{Remark}
}

Throughout this section,
the  following regularity assumptions are imposed  on the action sets $(A_i)_{i\in I_N}$ and  coefficients  $(B,\Sigma,(f_i)_{i\in I_N},(g_i)_{i\in I_N})$.

\begin{Assumption}
\label{assum:regularity_N}
$A_i\subset \sR^{k_i}$, $i\in I_N$, is   nonempty and convex.
The functions 
$(B, \Sigma):  [0,T]\times \sR^{n_x}\times \sR^{n_a}\to \sR^{n_x}\times \sR^{n_x\times n_w} $, 
$f_i:  [0,T]\times \sR^{n_x}\times \sR^{n_a}\to  \sR $ and $g_i :  \sR^{n_x}  \to\sR $, $i\in I_N$,
are measurable and 
satisfy the following properties:
for all $i\in I_N$ and $t\in [0,T]$, 
\begin{enumerate}[(1)]
\item
$(x,a)\mapsto \big(B(t,x,a),\Sigma(t,x,a),f_i(t,x,a),g_i(x) \big)$ 
 are twice continuously differentiable.
 \item 
$(x,a)\mapsto (B(t,x,a),\Sigma(t,x,a))$ is of linear growth 
and their first and second   derivatives are   bounded (uniformly in $t$). 
 \item 
$(x,a)\mapsto (f_i(t, x,a),g_i(x))$  and their first and second derivatives   are 
of polynomial growth  (uniformly in $t$).
\end{enumerate}

\end{Assumption} 

Under Condition (H.\ref{assum:regularity_N}), 
 for all $(t,x)\in [0,T]\times \sR^{n_x}$ and $\phi\in \spn( \pi^{(N)})$, 
\eqref{eq:state_policy} admits a unique strong solution 
$X^{t,x,\phi}\in \cS^\infty([t,T];\sR^{n_x})$ and the value function 
$V^{t,x}_i(\phi)$ in \eqref{eq:cost_policy} is well-defined 
(see e.g., \cite[Theorem 3.3.1]{zhang2017backward}).
We then introduce several stochastic processes, which will be used to describe  
the    conditions under which 
  $\cG_{\rm prob}$ is a potential game.

 Fix      $(t,x)\in [0,T]\times \sR^{n_x}$ and $\phi\in \pi^{(N)}$.
For each $i\in I_N$ and $\phi'_i\in \spn(\pi_i)$,
let  $\frac{\delta X^{t,x}}{\delta \phi_i}(\phi;\phi'_i)\in  \cS^\infty([t,T];\sR^{n_x})$ be the solution to   the following     equation: 
 \begin{align} 
\label{eq:X_sensitivity_first}
\begin{split}
\d Y_s & = (\p_x B^\phi) (s, X^{t,x, \phi}_s  )  Y_s   \d s 
 +  \big((\p_x \Sigma^\phi) (s, X^{t,x, \phi}_s  )  Y_s  \big)
\d W_s
\\
&\quad 
+ (\p_{a_i} B^\phi)[\phi'_i](s, X^{t,x, \phi}_s  )  \d s
+ (\p_{a_i} \Sigma^\phi)[\phi'_i](s, X^{t,x, \phi}_s  )  \d W_s
\quad \fa  s\in [t,T];
\quad     Y_t   =0, 
 \end{split}
 \end{align}
 where   $B^\phi(t,x)\coloneqq B  (t, x, \phi(t,x))$,
$(\p_{a_i} B^\phi)[\phi'_i](t,x)\coloneqq(\p_{a_i}B) (t, x,  \phi(t,x) )  \phi'_i(t,x)$,
and 
$\Sigma^\phi(t,x)$ and $(\p_{a_i} \Sigma^\phi)[\phi'_i](t,x)$ are defined similarly. 
Here the differentiations  are taken componentwise, 
i.e., $ ((\p_x B^\phi) (s, X^{t,x, \phi}_s  )  Y_s)_{\ell}
=\sum_{j=1}^{n_x}\p_{x_j} B^\phi_\ell (s, X^{t,x, \phi}_s  )  (Y_s)_{j}$
for all $\ell=1,\ldots, n_x$. 
Equation \eqref{eq:X_sensitivity_first}
is the sensitivity equation of \eqref{eq:state_policy}
with respect to $\phi_i$ (see Lemma
 \ref{lemma:1st_derivative_state}).

In addition,  
for each $\phi'_i\in \spn(\pi_i)$ and $\phi''_j\in \spn(\pi_j)$, 
 let    $\frac{\delta^2 X^{t,x}}{\delta \phi_i\delta \phi_j}(\phi;\phi'_i,\phi''_j)
\in  \cS^\infty([t,T];\sR^{n_x}) $ be the solution to   the following   equation: 
 \begin{align} 
\label{eq:X_sensitivity_second}
\begin{split}
\d Z_s & = (\p_x B^\phi) (s, X^{t,x, \phi}_s  )  Z_s  \d s 
 +  \big((\p_x \Sigma^\phi) (s, X^{t,x, \phi}_s  )  Z_s  \big)
\d W_s
\\
&\quad 
+ F_B\left(s,X^{t,x, \phi}_s, \frac{\delta X^{t,x}}{\delta \phi_i}(\phi;\phi'_i), \frac{\delta X^{t,x}}{\delta \phi_j}(\phi;\phi''_j), \phi'_i, \phi''_j\right)\d s
\\
&\q 
+F_\Sigma\left(s,X^{t,x, \phi}_s, \frac{\delta X^{t,x}}{\delta \phi_i}(\phi;\phi'_i), \frac{\delta X^{t,x}}{\delta \phi_j}(\phi;\phi''_j), \phi'_i, \phi''_j\right) \d W_s
\q \fa s\in [t,T];
\q Z_t=0, 
 \end{split}
 \end{align}
 where 
 \begin{align}
 \label{eq:F_B}
 \begin{split}
&   F_B(\cdot, y_1,y_2,  \phi'_i, \phi''_j)
\\
&   =y_2^\top  (\p_{xx} B^\phi) (\cdot ) y_1
+\phi''_j(\cdot)^\top (\p_{a_ia_j}B)(\cdot,\phi(\cdot))\phi'_i(\cdot)
\\
&\q +
 \left[\phi''_j(\cdot)^\top (\partial_{xa_j}B)(\cdot,\phi(\cdot))
+\phi''_j(\cdot)^\top  (\partial_{aa_j}B)(\cdot,\phi(\cdot))\p_x \phi(\cdot)
\right]y_1
+(\p_{a_j}B)(\cdot,\phi(\cdot))\p_x \phi''_j(\cdot)y_1
\\
& \q +
y_2^\top  \left[(\partial_{a_ix}B)(\cdot,\phi(\cdot)) \phi'_i(\cdot) 
+\p_x \phi(\cdot) ^\top (\partial_{a_ia}B)(\cdot,\phi(\cdot))\phi'_i(\cdot)\right]+(\p_{a_i}B)(\cdot,\phi(\cdot)\p_x \phi'_i(\cdot)
y_2,
\end{split}
 \end{align}
and  $F_\Sigma(\cdot, y_1,y_2,  \phi'_i, \phi''_j)$ is defined similarly. 
 Equation \eqref{eq:X_sensitivity_second}
is the sensitivity equation of \eqref{eq:state_policy}
with respect to $\phi_i$ and $\phi_j$ (see Lemma
 \ref{lemma:2nd_derivative_state}).

Finally, 
let   $\alpha^{t,x,\phi}_s=\phi(s,X^{t,x,\phi}_s)$ for all $s\in [t,T]$,
 and  for each $i,j\in I_N$, define 
 $
 \frac{\delta \alpha^{t,x}}{\delta \phi_i}(\phi;\cdot):\spn(\pi_i)\to \cH^\infty([t,T];\sR^{n_a})$
by
 \begin{align}
 \label{eq:control_1st}
  \frac{\delta \alpha^{t,x}}{\delta \phi_i}(\phi;\phi'_i)_s
  & =(\p_x \phi)(s,X^{t,x,\phi}_s)  \frac{\delta X^{t,x}}{\delta \phi_i}(\phi;\phi'_i)_s+E_i\phi'_i(s, X^{t,x,\phi}_s),
 \end{align}
 and define 
 $
 \frac{\delta^2 \alpha^{t,x}}{\delta \phi_i\delta\phi_j}(\phi;\cdot,\cdot):\spn(\pi_i)\times\spn(\pi_j) \to \cH^\infty([t,T];\sR^{n_a})
 $
 by
  \begin{align}
   \label{eq:control_2nd}
  \begin{split}
   & \frac{\delta^2 \alpha^{t,x}}{\delta \phi_i\delta\phi_j}(\phi;\phi'_i,\phi''_j)_s 
   \\
    & =\left(\frac{\delta X^{t,x}}{\delta \phi_j}(\phi;\phi''_j)_s\right)^\top (\p_{xx} \phi)(s,X^{t,x,\phi}_s)  \frac{\delta X^{t,x}}{\delta \phi_i}(\phi;\phi'_i)_s+(\p_x\phi)(s, X^{t,x,\phi}_s)  \frac{\delta^2 X^{t,x}}{\delta \phi_i\delta\phi_j}(\phi;\phi'_i,\phi''_j)_s 
    \\
     &\q +E_j (\p_x \phi''_j)(s, X^{t,x,\phi}_s) \frac{\delta X^{t,x}}{\delta \phi_i}(\phi;\phi'_i)_s
     +E_i (\p_x \phi'_i)(s, X^{t,x,\phi}_s) \frac{\delta X^{t,x}}{\delta \phi_j}(\phi;\phi''_j)_s,
     \end{split}
\end{align}  
 where $E_\ell\in \sR^{n_a\times k_\ell}$,   $\ell\in \{i,j\}$,   is a block row matrix whose $\ell$-th row is a $k_\ell$-by-$k_\ell$ identity matrix (player $\ell$'s action)
 and other rows are zero (other players' actions). 

The following theorem
 characterizes the linear derivatives of value functions in \eqref{eq:cost_policy} using the   sensitivity processes of  the states and controls, 
and further constructs potential functions for  the   game $\cG_{\rm prob}$  
defined by \eqref{eq:policy_probabilistic}
and \eqref{eq:cost_policy}.

\begin{Theorem}
\label{thm:differential_MPG_proba}
 Assume (H.\ref{assum:regularity_N}).
 For each $i\in I_N$, 
 given $\pi_i$ and  $V_i$ in \eqref{eq:policy_probabilistic}
and \eqref{eq:cost_policy}, respectively.
  \begin{enumerate}[(1)]
  \item
  \label{item:prob_first}
For each  $(t,x)\in [0,T]\times\sR^{n_x}$ and 
   $i,j\in I_N$, 
define 
$\frac{\delta V^{t,x}_i}{\delta \phi_j}:\pi^{(N)} \times  \spn(\pi_k) \to \sR$
such that for all  
$\phi\in \pi^{(N)}$
and $\phi'_j\in \spn(\pi_j)$, 
  \begin{align}
  \label{eq:value_derivative_1st}
  \begin{split}
 \frac{\delta V^{t,x}_i}{\delta \phi_j} (\phi;\phi'_j)
 &   = \sE \bigg[\int_t^T
 \begin{pmatrix}
 \p_x f_i & \p_a f_i
 \end{pmatrix}
 (s,X^{t,x,\phi}_s, \alpha^{t,x,\phi}_s)
 \begin{pmatrix}
   \frac{\delta X^{t,x}}{\delta \phi_j} (\phi;\phi'_j)_s
 \\
 \frac{\delta \alpha^{t,x}}{\delta \phi_j} (\phi;\phi'_j)_s
 \end{pmatrix}
 \d s  \bigg]
 \\
&\q + \sE \bigg[(\p_xg_i)(X^{t,x,\phi}_T)  \frac{\delta X^{t,x}}{\delta \phi_j} (\phi;\phi'_j)_T\bigg].
\end{split}
\end{align}
Then $\frac{\delta V^{t,x}_i}{\delta \phi_j} $
is a linear derivative of $\phi\mapsto V^{t,x}_i(\phi)$ 
with respect to $\pi_j$.

\item
\label{item:prob_second}
For each  $(t,x)\in [0,T]\times\sR^{n_x}$ and 
   $i, k,\ell\in I_N$,
define 
$\frac{\delta^2 V^{t,x}_i}{\delta \phi_k\delta \phi_\ell}:\pi^{(N)} \times  \spn(\pi_k)\times \spn(\pi_\ell) \to \sR$
such that  for all 
$\phi\in \pi^{(N)}$, $\phi'_k\in \spn(\pi_k)$
and $\phi''_\ell\in \spn(\pi_\ell)$, 
  \begin{align}
  \label{eq:value_derivative_2nd}
& \frac{\delta^2 V^{t,x}_i}{\delta \phi_k \delta \phi_\ell} (\phi;\phi'_k, \phi''_\ell)
\nb
\\
 &\q   = \sE \left[\int_t^T
    \begin{pmatrix}
  \frac{\delta X^{t,x}}{\delta \phi_\ell} (\phi;\phi''_\ell)_s 
  \nb
  \\
    \frac{\delta \alpha^{t,x}}{\delta \phi_\ell} (\phi;\phi''_\ell)_s 
\end{pmatrix}^\top
 \begin{pmatrix}
 \p_{xx} f_i   & \p_{ax} f_i
\\
\p_{x  a}f_i  & \p_{a  a} f_i
\end{pmatrix}(s,X^{t,x,\phi}_s, \alpha^{t,x,\phi}_s)
    \begin{pmatrix}
  \frac{\delta X^{t,x}}{\delta \phi_k} (\phi;\phi'_k)_s 
  \\
    \frac{\delta \alpha^{t,x}}{\delta \phi_k} (\phi;\phi'_k)_s 
\end{pmatrix} \d s\right]
\nb \\
&\qq  +  \sE \bigg[\int_t^T
 \begin{pmatrix}
 \p_x f_i & \p_a f_i
 \end{pmatrix}
 (s,X^{t,x,\phi}_s, \alpha^{t,x,\phi}_s)
 \begin{pmatrix}
   \frac{\delta^2 X^{t,x}}{\delta \phi_k\phi_\ell} (\phi;\phi'_k,\phi''_\ell)_s 
\nb \\
 \frac{\delta^2 \alpha^{t,x}}{\delta \phi_k\phi_\ell} (\phi;\phi'_k,\phi''_\ell)_s 
 \end{pmatrix}
 \d s  \bigg]
 \nb \\
 &\qq
+\sE\left[\left(\frac{\delta X^{t,x}}{\delta \phi_\ell} (\phi;\phi''_\ell)_T\right)^\top (\p_{xx}g_i)(X^{t,x,\phi}_T)  \frac{\delta X^{t,x}}{\delta \phi_k} (\phi;\phi'_k)_T\right]
\nb \\
 &\qq + 
 \sE\left[ (\p_xg_i)(X^{t,x,\phi}_T)  \frac{\delta^2 X^{t,x}}{\delta \phi_k\phi_\ell} (\phi;\phi'_k,\phi''_\ell)_T\right].
\end{align}  
Then  
$\frac{\delta^2 V^{t,x}_i}{\delta \phi_i\delta \phi_j}$ 
and $\frac{\delta^2 V^{t,x}_i}{\delta \phi_j\delta \phi_i}$
are  second-order linear derivatives of $\phi\mapsto V^{t,x}_i(\phi)$  
with respect to $\pi_i\times \pi_j$.

\item 
\label{item:prob_potential}
 If there exists $(t,x)\in [0,T]\times\sR^{n_x}$ such that for all   
$ \phi\in \pi^{(N)}$, 
$i,j\in I_N$,
  $\phi'_i\in \pi_i$  
and $ {\phi}''_j \in \pi_j$, 
\bb
\label{eq:prob_symmetry}
\frac{\delta^2 V^{t,x}_i}{\delta \phi_i \delta \phi_j} (\phi;\phi'_i, \phi''_j)
=\frac{\delta^2 V^{t,x}_j}{\delta \phi_j \delta \phi_i} (\phi;\phi''_j, \phi'_i)
\ee
with $\frac{\delta^2 V^{t,x}_i}{\delta \phi_i \delta \phi_j}$ 
and $\frac{\delta^2 V^{t,x}_j}{\delta \phi_j \delta \phi_i} $ 
defined in Item \ref{item:prob_second},
then 
 $\cG_{\rm prob}$ 
 with initial condition $(t,x)$
 is a CLPG 
 and 
a potential function is given by
 \eqref{eq:potential_variation} with  
$(  \frac{\delta V^{t,x}_i}{\delta \phi_i})_{i\in I_N}$
defined in Item \ref{item:prob_first}.

Moreover, if \eqref{eq:prob_symmetry} holds for all $(t,x)\in [0,T]\times\sR^{n_x}$,
then  $\cG_{\rm prob}$ is an MPG.

  \end{enumerate}

\end{Theorem}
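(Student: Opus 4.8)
The plan is to treat parts \ref{item:prob_first} and \ref{item:prob_second} as the computation of the first and second variations of the cost functional \eqref{eq:cost_policy} through the controlled diffusion \eqref{eq:state_policy}, and to obtain part \ref{item:prob_potential} by verifying the hypotheses of Theorem \ref{thm:symmetry_value_sufficient} for the derivatives so constructed. Throughout I fix $(t,x)$ and record that for every $j\in I_N$, $\phi\in\pi^{(N)}$ and $\phi'_j\in\pi_j$ the segment $\phi^\eps\coloneqq(\phi_j+\eps(\phi'_j-\phi_j),\phi_{-j})$, $\eps\in[0,1]$, stays in $\pi^{(N)}$ because $A_j$ is convex and the bounds in \eqref{eq:policy_probabilistic} are preserved by convex combinations; the same applies to segments $z+\eps\cdot(\phi-z)$.

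For part \ref{item:prob_first} I would first establish Lemma \ref{lemma:1st_derivative_state}: under (H.\ref{assum:regularity_N}) the difference quotient $\eps^{-1}(X^{t,x,\phi^\eps}-X^{t,x,\phi})$ converges in $\cS^\infty([t,T];\sR^{n_x})$ to the solution $\frac{\delta X^{t,x}}{\delta\phi_j}(\phi;\phi'_j)$ of the linear SDE \eqref{eq:X_sensitivity_first}. This is the standard first-variation argument for SDEs: write the SDE for the difference, Taylor-expand $B^\phi,\Sigma^\phi$ and the policy increment, and close the estimate with the Burkholder--Davis--Gundy and Gr\"onwall inequalities, using the bounded first and second derivatives of $B,\Sigma$ and the linear growth of $\phi'_j-\phi_j$ from \eqref{eq:policy_probabilistic}. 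Since $\alpha^{t,x,\phi^\eps}_s$ equals $(\phi_j+\eps(\phi'_j-\phi_j))(s,X^{t,x,\phi^\eps}_s)$ on agent $j$'s block and $\phi_k(s,X^{t,x,\phi^\eps}_s)$ elsewhere, a Taylor expansion in the state and in $\eps$ shows that $\eps^{-1}(\alpha^{t,x,\phi^\eps}-\alpha^{t,x,\phi})$ converges in $\cH^\infty$ to $\frac{\delta\alpha^{t,x}}{\delta\phi_j}(\phi;\phi'_j)$ of \eqref{eq:control_1st}, the two summands being the contribution of the state perturbation (through $\p_x\phi$) and of the explicit policy perturbation. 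Expanding $f_i$ and $g_i$ to first order along $(X^{t,x,\phi^\eps},\alpha^{t,x,\phi^\eps})$, controlling the remainders by the polynomial growth of the derivatives in (H.\ref{assum:regularity_N}) together with uniform moment bounds on the state and the sensitivity processes, and interchanging limit and expectation by dominated convergence, yields \eqref{eq:first_der_def} with right-hand side \eqref{eq:value_derivative_1st}; linearity in $\phi'_j$ is immediate from the linearity of \eqref{eq:X_sensitivity_first}, \eqref{eq:control_1st} and \eqref{eq:value_derivative_1st} in that argument, which also lets one extend the formula from $\pi_j$ to $\spn(\pi_j)$.

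Part \ref{item:prob_second} repeats the scheme one order higher: differentiate $\eps\mapsto\frac{\delta V^{t,x}_i}{\delta\phi_k}(\phi^\eps;\phi'_k)$ at $\eps=0$, where $\phi^\eps$ now perturbs component $\ell$. The new object is the second-order sensitivity $\frac{\delta^2 X^{t,x}}{\delta\phi_k\delta\phi_\ell}(\phi;\phi'_k,\phi''_\ell)$, which I would characterise as the solution of \eqref{eq:X_sensitivity_second} (Lemma \ref{lemma:2nd_derivative_state}), obtained by differentiating \eqref{eq:X_sensitivity_first} in $\phi_\ell$; the inhomogeneities $F_B,F_\Sigma$ of \eqref{eq:F_B} merely collect the terms involving $\p_{xx}B^\phi$, the mixed second derivatives of $B$, and the Jacobians $\p_x\phi'_k,\p_x\phi''_\ell$. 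Differentiating \eqref{eq:control_1st} likewise gives \eqref{eq:control_2nd}. The product rule applied to the integrand of \eqref{eq:value_derivative_1st} then splits it into a Hessian-of-$f_i$ term contracted against the two first-order sensitivities and a gradient-of-$f_i$ term against the second-order sensitivity, and analogously for the terminal cost $g_i$; this is exactly \eqref{eq:value_derivative_2nd}. Bilinearity in $(\phi'_k,\phi''_\ell)$ and the property that $\frac{\delta^2 V^{t,x}_i}{\delta\phi_k\delta\phi_\ell}(\cdot;\phi'_k,\cdot)$ is a linear derivative of $\frac{\delta V^{t,x}_i}{\delta\phi_k}(\cdot;\phi'_k)$ with respect to $\pi_\ell$ are read off the construction, so Definition \ref{def:linear_deri} is satisfied by $\frac{\delta^2 V^{t,x}_i}{\delta\phi_i\delta\phi_j}$ and $\frac{\delta^2 V^{t,x}_i}{\delta\phi_j\delta\phi_i}$.

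Finally, part \ref{item:prob_potential} is a direct application of Theorem \ref{thm:symmetry_value_sufficient}: its condition \ref{item:symmetry-Vi} is the assumed symmetry \eqref{eq:prob_symmetry}, while its conditions \ref{item:integrable_p-Vi} and \ref{item:joint_continuity} follow from uniform-in-$(r,\eps)$ boundedness and continuity at $0$ of $\eps\mapsto\big(X^{t,x,\phi^\eps},\frac{\delta X^{t,x}}{\delta\phi_i}(\phi^\eps;\cdot),\frac{\delta^2 X^{t,x}}{\delta\phi_i\delta\phi_j}(\phi^\eps;\cdot)\big)$ in $\cS^\infty\times\cS^\infty\times\cS^\infty$ (and of the associated control processes in $\cH^\infty$), a stability estimate for the coupled linear SDEs \eqref{eq:state_policy}, \eqref{eq:X_sensitivity_first}, \eqref{eq:X_sensitivity_second} under policy perturbation; combined with the polynomial growth of the derivatives of $f_i,g_i$ and H\"older's inequality this bounds \eqref{eq:value_derivative_2nd} uniformly and gives its continuity. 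Theorem \ref{thm:symmetry_value_sufficient} then yields that $\cG_{\rm prob}$ with initial condition $(t,x)$ is a CLPG with potential \eqref{eq:potential_variation}, and the MPG claim follows by letting $(t,x)$ range over $[0,T]\times\sR^{n_x}$. The main obstacle is the SDE analysis behind Lemmas \ref{lemma:1st_derivative_state}--\ref{lemma:2nd_derivative_state} and its uniform estimates: one must prove $L^q$-convergence of the first- and second-order difference quotients of the state for every $q\ge1$, with constants uniform over the convex segments appearing in condition \ref{item:integrable_p-Vi}, while the coefficients $B,\Sigma$ are only twice differentiable with bounded derivatives and the costs $f_i,g_i$ grow polynomially.
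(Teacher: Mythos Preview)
Your proposal is correct and follows essentially the same route as the paper: establish the first- and second-order sensitivity lemmas for the state and control processes (Lemmas \ref{lemma:1st_derivative_state}, \ref{lemma:2nd_derivative_state} and the analogous control result), differentiate under the expectation to obtain \eqref{eq:value_derivative_1st} and \eqref{eq:value_derivative_2nd}, and then apply Theorem \ref{thm:symmetry_value_sufficient} for part \ref{item:prob_potential} by checking the boundedness and continuity conditions via uniform $\cS^\infty$/$\cH^\infty$ moment and stability estimates for the SDEs \eqref{eq:state_policy}, \eqref{eq:X_sensitivity_first}, \eqref{eq:X_sensitivity_second}. The only cosmetic difference is that the paper outsources the SDE differentiability to Krylov's results (Theorems 4 and 9 in \cite{krylov2008controlled}) rather than carrying out the BDG--Gr\"onwall argument by hand.
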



 Theorem 
\ref{thm:differential_MPG_proba} generalises 
Theorem \ref{thm:distributed_game}
by accommodating interconnected states among players. 
Indeed, for the distributed game $ \cG_{\rm dist}$     in Definition \ref{example:distributed_game},
player $i$'s state and control processes depend solely on her own policy $\phi_i$, resulting in zero derivatives with respect to player $j$'s policy $\phi_j$ for all $j \neq i$. 
In this distributed game,    \eqref{eq:value_derivative_2nd}
can be simplified and \eqref{eq:prob_symmetry} follows from   condition \eqref{eq:symmetric_hessian_distributed}.

  { 
To prove 
 Theorem 
\ref{thm:differential_MPG_proba}, one first   shows that 
\eqref{eq:X_sensitivity_first}
and \eqref{eq:X_sensitivity_second}
characterize the derivatives of  $X^{t,x,\phi}$
with respect to policies, 
using the pathwise differentiability 
of solutions to SDEs  \cite[Theorem 4, p.~105]{krylov2008controlled}.
Then, by the regularity assumptions and 
 the chain rule
\cite[Theorem 9, p.~97]{krylov2008controlled}, 
one can show that 
 \eqref{eq:control_1st} and  \eqref{eq:control_2nd} 
characterize
the derivatives of   $\alpha^{t,x,\phi}$ with respect to policies,
and further establish 
\eqref{eq:value_derivative_1st}
and \eqref{eq:value_derivative_2nd}. 
The detailed proof is given in Appendix \ref{sec:proof_sde_probabilistic}.

}
 
\subsection{PDE  Characterization of  Continuous-time  Potential Games}
  \label{sec:sde_analytic}
  This section studies the continuous-time   games 
    in Section \ref{sec:sde_probabilistic} via a PDE approach.
For general games with 
{H\"{o}lder continuous policies} and 
nondegenerate diffusion coefficients, potential functions and the linear derivatives of value functions are characterized using a system of linear  PDEs.

  Consider  a      game 
 $\cG_{\rm analyt}=(I_N, \cS, (A_i)_{i\in I_N}, \pi^{(N)}, (V_i)_{i\in I_N})$,
with   $I_N=\{1,\ldots, N\}$,
  $\cS= [0,T]\times \sR^{n_x}$, 
   $A_i\subset \sR^{k_i}$ 
   and $n_a=\sum_{i\in I_N}k_i$
   as in  Section \ref{sec:sde_probabilistic}. 
The set  
  $\pi^{(N)}=\prod_{i\in I_N}\pi_i$ consists of all     policy profiles,
  where   
   player $i$'s   policy class $\pi_i$ is  defined by (cf.~\eqref{eq:policy_probabilistic}):
\bb
\label{eq:policy_pde}
\pi_i=\{\phi:[0,T]\times\sR^{n_x}\to A_i\mid \phi\in C^{\gamma/2,\gamma}([0,T]\times \sR^{n_x};\sR^{k_i})\}
\quad \textnormal{for some $\gamma\in (0,1]$.}
\ee
The value functions $(V_i)_{i\in I_N}$ are defined  in a manner analogous to \eqref{eq:cost_policy}. However, we impose a different set of regularity conditions compared to (H.\ref{assum:regularity_N}) to facilitate the subsequent PDE analysis.

\begin{Assumption}
\label{assum:Holder_regularity}
$A_i\subset \sR^{k_i}$, $i\in I_N$, is   nonempty and convex.
There exists $\beta\in (0,1]$ and $\kappa>0$ such that 
\begin{enumerate}[(1)]
\item
\label{item:holder}
 $(B, \Sigma)\in C^{ {\beta}/{2},\beta,\beta}( [0,T]\times \sR^{n_x}\times \sR^{n_a} ; \sR^{n_x}\times \sR^{n_x\times n_x}) $ and 
for all $i\in I_N$, $f_i\in C^{ {\beta}/{2},\beta,2+\beta}( [0,T]\times \sR^{n_x}\times \sR^{n_a};  \sR )$ and $g_i \in C^{2+\beta}( \sR^{n_x} ;\sR) $;
\item
\label{item:non-degeneracy}
  For all $t\in [0,T]$ and $(x,a)\in \sR^{n_x}\times \sR^{n_a}$, 
$\Sigma(t,x,a)$ is symmetric and satisfies 
$\xi^\top \Sigma(t,x,a)\xi \ge \kappa |\xi|^2$ for all $\xi\in \sR^{n_x}$.
\end{enumerate}
\end{Assumption} 

{ 

\begin{Remark}
\label{rmk:pde_irregular_policy}
Condition (H.\ref{assum:Holder_regularity}) and  \eqref{eq:policy_pde}
allow for    H\"{o}lder continuous 
state coefficients and policies,
relaxing the twice differentiability condition required    
 in Section \ref{sec:sde_probabilistic}   (cf.~(H.\ref{assum:regularity_N})
and \eqref{eq:policy_probabilistic}). The  H\"{o}lder continuity of  policies,
along with the uniform ellipticity of the diffusion coefficient,
ensures the uniqueness in law of the state process and facilitates  characterizing the value functions' linear derivatives  by    solutions of   associated PDEs in     H\"{o}lder spaces. If the diffusion coefficient of \eqref{eq:state_policy} is uncontrolled, one can in fact consider measurable policies and establish similar characterizations by  analyzing PDEs in suitable Sobolev spaces (see e.g., \cite{ito2021neural, sethi2024entropy, pradhan2024near}).
 
 \end{Remark}
  }

Under (H.\ref{assum:Holder_regularity}), 
 by \cite[Theorem 2]{mishura2020existence},  
  for each $(t,x)\in [0,T]\times \sR^{n_x}$ and $\phi\in \pi^{(N)}$, 
 the     state   dynamics \eqref{eq:state_policy}
 admits a unique weak solution $X^{t,x,\phi}$ on a probability space 
 $(\Omega, \cF, \bar{\sP})$ and the value functions 
  \bb\label{eq:cost_pde}
  V^\phi_i(t,x) = \sE^{\bar{\sP}} \left[\int_t^T f_i(s,X^{t,x,\phi}_s, \phi(s,X^{t,x,\phi}_s))\d s + g_i(X^{t,x,\phi}_T)\right],
  \q i\in I_N
\ee
  are well-defined. 
In contrast to \eqref{eq:cost_policy}, in \eqref{eq:cost_pde}, we use the notation $V^\phi_i(t,x)\coloneqq V_i(t,x, \phi)$ to analyze the value functions for all initial conditions $(t, x)$ simultaneously.
By standard   regularity results of linear PDEs 
  (see e.g., \cite[Theorem 5.1, p.~320]{ladyzhenskaia1988linear})
  and It\^{o}'s formula, 
  $(t,x)\mapsto V^{\phi}_i(t,x)$ is the unique classical solution to 
the following PDE:
 for all $(t,x)\in [0,T]\times \sR^{n_x}$, 
 \begin{align}
 \label{eq:pde_value}
\begin{split}
&\p_t V(t,x) + \cL^\phi V(t,x)+f_i(t,x,\phi(t,x))=0;   \quad  
  V(T,x)= g_i(x),
\end{split}
\end{align}
 where $\cL^\phi$ is the generator of \eqref{eq:state_policy} defined by
$$
   \cL^\phi u(t,x) = \frac{1}{2}\operatorname{tr} \big(\Sigma\Sigma^\top(t,x,\phi(t,x))(\p_{xx} u)(t,x)\big)+ B(t,x,\phi(t,x))^\top (\p_x u)(t,x).
   $$ 
   
 We now introduce several    equations associated with 
\eqref{eq:pde_value}, 
whose solutions characterize the linear derivatives of $(V_i)_{i\in I_N}$. 
Fix $i\in I_N$. 
For each $k\in I_N$, 
consider  
$\frac{\delta V_i}{\delta \phi_k}: [0,T]\times \sR^{n_x}\times  \pi^{(N)}\times \spn(\pi_k)
\to \sR$:
\bb
\label{eq:value_1st_pde}
  (t,x,\phi, \phi'_k)\mapsto 
  \frac{\delta V_i}{\delta \phi_k}(t,x,\phi, \phi'_k)
\coloneqq \frac{\delta V^\phi_i}{\delta \phi_k}(t,x; \phi'_k), 
\ee
where 
  $(t,x)\mapsto   \frac{\delta V^\phi_i}{\delta \phi_k}(t,x; \phi'_k)$  solves  the following equation: for all $(t,x)\in [0,T]\times \sR^{n_x}$, 
\bb
\label{eq:pde_value_1st}
 \left\{
 \begin{aligned}
 &\p_t U (t,x) +\cL^{\phi} U (t,x) + (\p_{a_k} H_i)\big(t,x,(\p_x V^\phi_i)(t,x),(\p_{xx} V^\phi_i)(t,x),\phi(t,x)\big)^\top \phi'_k(t,x)=0,
 \\
& U(T,x)= 0,
 \end{aligned}\right.
 \ee
 with the function  
$H_i:[0,T]\times \sR^{n_x}\times \sR^{n_x}\times   \sR^{n_x\times n_x}\times \sR^{n_a}\to \sR$  
defined as follows:
\bb
\label{eq:Hamiltonian_H}
H_i(t,x,y,z,a)= \frac{1}{2}\operatorname{tr}  \big(  (\Sigma\Sigma^\top ) (t,x,a ) z  \big)
+ B(t,x, a)^\top y+ f_i(t,x, a).
\ee
 Equation \eqref{eq:pde_value_1st} 
 describes  the sensitivity   of  \eqref{eq:pde_value}
with respect to $\phi_k$.
It  is   derived 
by considering  the value function $V^{\phi^\eps}$
with 
 $\phi^\eps=(\phi_k+\eps(\phi'_k-\phi_k),\phi_{-k})$ for   $\eps\in [0,1)$, 
and then 
  differentiating   the corresponding PDE  \eqref{eq:pde_value}  with respect to $\eps$.

 In addition, for each $k, \ell\in I_N$, 
consider the map 
$\frac{\delta^2 V^\phi_i}{\delta \phi_k\phi_\ell}: 
[0,T]\times \sR^{n_x}\times  \pi^{(N)}\times \spn(\pi_k)\times  \spn(\pi_\ell)\to \sR$:
\bb
\label{eq:value_2nd_pde}
  (t,x,\phi, \phi'_k,\phi''_\ell)\mapsto 
  \frac{\delta^2 V_i}{\delta \phi_k\phi_\ell}(t,x, \phi, \phi'_k,\phi''_\ell)
  \coloneqq 
\frac{\delta^2 V^\phi_i}{\delta \phi_k\phi_\ell}(t,x; \phi'_k,\phi''_\ell), 
\ee
 where  
 $ (t,x)\mapsto 
\frac{\delta^2 V^\phi_i}{\delta \phi_k\phi_\ell}(t,x; \phi'_k,\phi''_\ell)$ 
solves 
the following equation:  for all $(t,x)\in [0,T]\times \sR^{n_x}$, 
    \bb
\label{eq:pde_value_2nd}
 \left\{
 \begin{aligned}
 &\p_t  W (t,x) +\cL^{\phi} W (t,x) 
\\
& \qq
+
 (\p_{a_\ell} L)\left(t,x, \left(\p_x \frac{\delta V^\phi_i}{\delta \phi_k}\right)(t,x; \phi'_k) ,
  \left(\p_{xx} \frac{\delta V^\phi_i}{\delta \phi_k}\right)(t,x; \phi'_k) ,\phi(t,x)\right) ^\top \phi''_\ell(t,x)
\\
&  \qq
 +  (\p_{a_k} L)\left(t,x, \left(\p_x \frac{\delta V^\phi_i}{\delta \phi_\ell}\right)(t,x; \phi''_\ell) ,
\left(\p_{xx} \frac{\delta V^\phi_i}{\delta \phi_\ell}\right)(t,x; \phi''_\ell),
\phi(t,x)\right) ^\top
 \phi'_k(t,x)
 \\
 & \qq
 +   \phi''_\ell(t,x)^\top (\p_{a_ka_\ell} H_i)\big(t,x,(\p_x V^{\phi}_i)(t,x),(\p_{xx} V^{\phi}_i)(t,x),\phi(t,x)\big)  \phi'_k(t,x)
 =0,
 \\
& W(T,x)=0,
 \end{aligned}\right.
 \ee
 with the function  
$L:[0,T]\times \sR^{n_x}\times \sR^{n_x}\times   \sR^{n_x\times n_x}\times \sR^{n_a}\to \sR$  defined by
 \bb
\label{eq:L}
L(t,x,y,z,a)= \frac{1}{2}\operatorname{tr}  \big(  (\Sigma\Sigma^\top ) (t,x,a ) z  \big)
+ B(t,x, a)^\top y.
\ee
 Equation \eqref{eq:pde_value_2nd}
 describes  the sensitivity   of  \eqref{eq:pde_value}
with respect to $\phi_k$ and $\phi_\ell$.

Now, via the 
Schauder's estimate  \cite[Theorem 5.1, p.~320]{ladyzhenskaia1988linear} for 
 \eqref{eq:pde_value}, \eqref{eq:pde_value_1st} 
and   \eqref{eq:pde_value_2nd}   in suitable H\"{o}lder spaces
one can show  that 
the maps 
 \eqref{eq:value_1st_pde}
 and 
 \eqref{eq:value_2nd_pde}
 are well-defined, 
 and are  the linear derivatives of value function  $V_i$.

  \begin{Theorem}
\label{thm:differential_MPG_pde}
   Assume  (H.\ref{assum:Holder_regularity}).
    For each $i\in I_N$, 
 let $\pi_i$ and  $V_i$ be defined by \eqref{eq:policy_pde}
and \eqref{eq:cost_pde}, respectively.

\begin{enumerate}[(1)]
\item 
  \label{item:pde_first}
For all  $i,k\in I_N$, the map $\frac{\delta V_i}{\delta \phi_k}$ in  \eqref{eq:value_1st_pde} is well-defined, 
and for all $(t,x)\in [0,T]\times \sR^{n_x}$,
  $(\phi,\phi'_k)\mapsto \frac{\delta V^{\phi}_i}{\delta \phi_k}(t,x; \phi'_k) $
is a linear derivative of $\phi\mapsto V^{\phi}_i(t,x)$ 
with respect to $\pi_k$.
 \item 
  \label{item:pde_second}
For all $i,k,\ell\in I_N$, the map $\frac{\delta^2 V_i}{\delta \phi_k\delta \phi_\ell}$ in  \eqref{eq:value_2nd_pde} is well-defined, 
and for all $(t,x)\in [0,T]\times \sR^{n_x}$,
  $(\phi,\phi'_k,\phi''_\ell)\mapsto \frac{\delta^2 V^\phi_i}{\delta \phi_k\delta \phi_\ell}(t,x; \phi'_k, \phi''_\ell) $
  and 
  $(\phi,\phi''_\ell,\phi'_k)\mapsto \frac{\delta^2 V^\phi_i}{\delta \phi_\ell \delta \phi_k}(t,x; \phi''_\ell, \phi'_k) $  
are  second-order linear derivatives of $\phi\mapsto V^{\phi}_i(t,x)$ 
with respect to $\pi_k\times \pi_\ell$.

\item 
\label{item:pde_potential}
  If 
  there exists $(t,x)\in [0,T]\times\sR^{n_x}$
  such that 
  for all   
$ \phi\in \pi^{(N)}$, 
$i,j\in I_N$,
  $\phi'_i\in \pi_i$  
and $ {\phi}''_j \in \pi_j$, 
\bb
\label{eq:pde_symmetry}
 \frac{\delta^2 V^{\phi}_i}{\delta \phi_i \delta \phi_j} (t,x;\phi'_i, \phi''_j)
=\frac{\delta^2 V^{\phi}_j}{\delta \phi_j \delta \phi_i} (t,x;\phi''_j, \phi'_i),
\ee
with $ \frac{\delta^2 V^{\phi}_i}{\delta \phi_i \delta \phi_j} $ 
and $\frac{\delta^2 V^{\phi}_j}{\delta \phi_j \delta \phi_i}  $ 
defined in Item \ref{item:pde_second},
then 
 $\cG_{\rm analyt}$    with initial condition $(t,x)$
is a CLPG 
and 
a potential function is given by
 \eqref{eq:potential_variation} 
with  
$(  \frac{\delta V_i}{\delta \phi_i})_{i\in I_N}$
defined in Item \ref{item:pde_first}.
Moreover, if \eqref{eq:pde_symmetry} holds for all $(t,x)\in [0,T]\times\sR^{n_x}$,
then  $\cG_{\rm analyt}$ is an MPG. 

\end{enumerate}
   
 \end{Theorem}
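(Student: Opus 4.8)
The plan is to upgrade the bare solvability of the linear parabolic equations \eqref{eq:pde_value}, \eqref{eq:pde_value_1st}, \eqref{eq:pde_value_2nd} to quantitative Schauder bounds, to use a perturbation argument to identify their solutions with the linear derivatives of the value functions, and then to invoke Theorem~\ref{thm:symmetry_value_sufficient}.

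\emph{Step 1 (well-posedness and uniform bounds).} Fix $\phi\in\pi^{(N)}$ and put $\bar\beta\coloneqq\min\{\beta,\gamma\}$. Since $\phi\in C^{\gamma/2,\gamma}$ and $(B,\Sigma,f_i)$ satisfy (H.\ref{assum:Holder_regularity})(\ref{item:holder}), the composed data $B(\cdot,\phi(\cdot))$, $(\Sigma\Sigma^\top)(\cdot,\phi(\cdot))$, $f_i(\cdot,\phi(\cdot))$ lie in $C^{\bar\beta/2,\bar\beta}$ with norms controlled by $\|\phi\|_{C^{\gamma/2,\gamma}}$, and by (H.\ref{assum:Holder_regularity})(\ref{item:non-degeneracy}) the operator $\p_t+\cL^\phi$ is uniformly parabolic; by \cite[Theorem~5.1, p.~320]{ladyzhenskaia1988linear} (and It\^o's formula, as noted before the statement) $V^\phi_i$ is therefore the unique classical solution of \eqref{eq:pde_value} with $\|V^\phi_i\|_{C^{1+\bar\beta/2,2+\bar\beta}}\le C(\|\phi\|_{C^{\gamma/2,\gamma}})$, the bound being global because all data are bounded. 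Hence the source of \eqref{eq:pde_value_1st} lies in $C^{\bar\beta/2,\bar\beta}$, so \eqref{eq:pde_value_1st} is uniquely solvable, its solution is linear in $\phi'_k$, and $\|\frac{\delta V^\phi_i}{\delta\phi_k}(\phi;\phi'_k)\|_{C^{1+\bar\beta/2,2+\bar\beta}}\le C(\|\phi\|)\,\|\phi'_k\|$; feeding this into \eqref{eq:pde_value_2nd} and repeating the argument gives unique solvability of \eqref{eq:pde_value_2nd} with a solution bilinear in $(\phi'_k,\phi''_\ell)$ and $\|\frac{\delta^2 V^\phi_i}{\delta\phi_k\delta\phi_\ell}(\phi;\phi'_k,\phi''_\ell)\|_{C^{1+\bar\beta/2,2+\bar\beta}}\le C(\|\phi\|)\,\|\phi'_k\|\,\|\phi''_\ell\|$. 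The maps \eqref{eq:value_1st_pde}, \eqref{eq:value_2nd_pde} are thus well-defined, and the same estimates show that $\phi\mapsto V^\phi_i$, $\phi\mapsto\frac{\delta V^\phi_i}{\delta\phi_k}(\phi;\cdot)$ and $\phi\mapsto\frac{\delta^2 V^\phi_i}{\delta\phi_k\delta\phi_\ell}(\phi;\cdot,\cdot)$ are continuous from $C^{\gamma/2,\gamma}$ into $C^{1+\bar\beta/2,2+\bar\beta}$, since a $C^{\gamma/2,\gamma}$-convergent family of policies produces $C^{\bar\beta/2,\bar\beta}$-convergent coefficients and sources in \eqref{eq:pde_value}, \eqref{eq:pde_value_1st}, \eqref{eq:pde_value_2nd} and linear Schauder estimates propagate this. (Since each $\pi_i$ is convex, so is $\pi^{(N)}$.)

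\emph{Step 2 (identification with the linear derivatives).} Fix $(t,x)$, $\phi\in\pi^{(N)}$, $k\in I_N$, $\phi'_k\in\pi_k$, and for $\eps\in[0,1]$ set $\phi^\eps\coloneqq(\phi_k+\eps(\phi'_k-\phi_k),\phi_{-k})\in\pi^{(N)}$ (admissible by convexity of $A_k$). With $U$ the solution of \eqref{eq:pde_value_1st} in direction $\phi'_k-\phi_k$, subtracting the PDE \eqref{eq:pde_value} at $\phi^\eps$ from that at $\phi$ and using the equation for $U$ shows that $R^\eps\coloneqq V^{\phi^\eps}_i-V^\phi_i-\eps U$ solves $\p_tR^\eps+\cL^{\phi^\eps}R^\eps+\rho^\eps=0$, $R^\eps(T,\cdot)=0$, where a first-order Taylor expansion of $B,\Sigma\Sigma^\top,f_i$ in the action variable together with the $C^{1,2}$-bounds of Step~1 gives $\|\rho^\eps\|_{C^0}=o(\eps)$; the maximum principle (or Feynman--Kac) then yields $\|R^\eps\|_{C^0}\le T\|\rho^\eps\|_{C^0}=o(\eps)$, which is exactly the limit \eqref{eq:first_der_def} for $V^{t,x}_i$, and a Schauder estimate on the same equation strengthens this to $\eps^{-1}(V^{\phi^\eps}_i-V^\phi_i)\to U$ in $C^{1,2}_{\rm loc}$. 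This proves Item~\ref{item:pde_first}. For Item~\ref{item:pde_second}, fix $\ell\in I_N$, $\phi''_\ell\in\pi_\ell$, now perturb the $\ell$-th component, $\phi^\eps\coloneqq(\phi_\ell+\eps(\phi''_\ell-\phi_\ell),\phi_{-\ell})$, and write $\Psi^\psi\coloneqq\frac{\delta V^\psi_i}{\delta\phi_k}(\psi;\phi'_k)$; since $\Psi^{\phi^\eps}$ solves \eqref{eq:pde_value_1st} at base policy $\phi^\eps$, the same remainder argument applied to $\Psi^{\phi^\eps}-\Psi^\phi-\eps W$, with $W$ the solution of \eqref{eq:pde_value_2nd} in directions $(\phi'_k,\phi''_\ell-\phi_\ell)$, gives $\eps^{-1}(\Psi^{\phi^\eps}(t,x)-\Psi^\phi(t,x))\to W(t,x)$; the one additional input is that the difference quotient of the source of \eqref{eq:pde_value_1st} converges, which holds because $\eps^{-1}\big((\p_xV^{\phi^\eps}_i,\p_{xx}V^{\phi^\eps}_i)-(\p_xV^\phi_i,\p_{xx}V^\phi_i)\big)\to(\p_x\Theta,\p_{xx}\Theta)$, $\Theta\coloneqq\frac{\delta V^\phi_i}{\delta\phi_\ell}(\phi;\phi''_\ell-\phi_\ell)$, by the $C^{1,2}_{\rm loc}$-statement just proved, and a short computation using $H_i=L+f_i$ and the definitions \eqref{eq:L}, \eqref{eq:Hamiltonian_H} matches the resulting limiting source with that of \eqref{eq:pde_value_2nd}. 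Running this for $(k,\ell)\in\{(i,j),(j,i)\}$ and combining with Item~\ref{item:pde_first} yields that $\phi\mapsto V^{t,x}_i(\phi)$ has second-order linear derivatives with respect to $\pi_i\times\pi_j$, given by \eqref{eq:value_2nd_pde}.

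\emph{Step 3 (symmetric Jacobian implies potential game).} Assume \eqref{eq:pde_symmetry} for the given $(t,x)$. It remains to verify the hypotheses of Theorem~\ref{thm:symmetry_value_sufficient} for $V^{t,x}_i$: hypothesis~(\ref{item:symmetry-Vi}) is exactly \eqref{eq:pde_symmetry}; hypothesis~(\ref{item:integrable_p-Vi}) (boundedness) holds because $\{z+r(\phi^\eps-z):r,\eps\in[0,1]\}$ has uniformly bounded $C^{\gamma/2,\gamma}$-norm, so the bound on $\frac{\delta^2 V^\psi_i}{\delta\phi_i\delta\phi_j}(\psi;\phi'_i,\phi''_j)$ from Step~1 is uniform over it; and hypothesis~(\ref{item:joint_continuity}) (continuity at $0$) follows from the continuity of $\psi\mapsto\frac{\delta^2 V^\psi_i}{\delta\phi_i\delta\phi_j}(t,x;\phi'_i,\phi''_j)$ on $C^{\gamma/2,\gamma}$ established in Step~1. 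Theorem~\ref{thm:symmetry_value_sufficient} then gives that $\cG_{\rm analyt}$ with initial condition $(t,x)$ is a CLPG with potential function \eqref{eq:potential_variation} (built from the first-order derivatives of Item~\ref{item:pde_first}); and if \eqref{eq:pde_symmetry} holds for all $(t,x)$, the same theorem gives that $\cG_{\rm analyt}$ is an MPG.

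\emph{Expected main obstacle.} The crux is Step~2, and in particular the fact that Items~\ref{item:pde_first} and~\ref{item:pde_second} must be proved as a coupled bootstrap: Item~\ref{item:pde_first} has to be obtained not merely as pointwise convergence of difference quotients but in the stronger $C^{1,2}_{\rm loc}$ topology, since the limiting source of \eqref{eq:pde_value_2nd} involves $\p_x$ and $\p_{xx}$ of the first-order sensitivity, and only then can the second-order difference quotient be passed to the limit. Making the remainder estimates uniform in $\eps$ over the unbounded domain $\sR^{n_x}$ — feasible here precisely because (H.\ref{assum:Holder_regularity}) and the nondegeneracy of $\Sigma$ furnish global bounds and a global maximum principle — is the accompanying technical point, together with the careful bookkeeping of the Taylor expansions that reduce the difference quotients of the PDE sources to the right-hand sides of \eqref{eq:pde_value_1st} and \eqref{eq:pde_value_2nd}.
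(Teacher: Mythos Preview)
Your proposal is essentially the same as the paper's proof: Schauder well-posedness of \eqref{eq:pde_value}, \eqref{eq:pde_value_1st}, \eqref{eq:pde_value_2nd}, a perturbation/remainder argument to identify the PDE solutions with the linear derivatives (with the first-order convergence established in a $C^{1,2}$ topology so that it feeds into the second-order step), and then verification of the hypotheses of Theorem~\ref{thm:symmetry_value_sufficient}. One small correction: the H\"older exponent for the composed coefficients should be $\eta=\beta\gamma$, not $\bar\beta=\min\{\beta,\gamma\}$, since composing a $C^\beta$ function with a $C^\gamma$ map yields only $C^{\beta\gamma}$ regularity in general; the paper works with $\eta=\beta\gamma$ throughout, and this is all that is needed.
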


{ 
Compared with   the probabilistic characterizations in Theorem \ref{thm:differential_MPG_proba},
 \eqref{eq:pde_value_1st} and  \eqref{eq:pde_value_2nd}  do not need the spatial derivatives of   policies, allowing for  characterizing the linear derivatives of value functions with a more general class of nondifferentiable policies. 
 }


\subsection{Continuous-time Linear Quadratic   Potential Game}
 Theorems \ref{thm:differential_MPG_proba} and 
  \ref{thm:differential_MPG_pde}
   characterize   continuous-time MPGs differently 
depending on the regularity   of   model coefficients and players' admissible policies.
This section shows that
  these two characterizations  coincide  for a class of linear-quadratic (LQ) games.
  Moreover, by leveraging the LQ structure of the problem, 
 simpler characterizations of MPGs can be obtained through a system of ODEs.
In this section, the time variable of all coefficients is dropped when there is no risk of  confusion.

  Consider  the     game 
 $\cG_{\rm LQ}=(I_N, \cS, (A_i)_{i\in I_N}, \pi^{(N)}, (V_i)_{i\in I_N})$,
where $I_N=\{1,\ldots, N\}$, 
$\cS= [0,T]\times \sR^{n_x}$,   $A_i=\sR^{k_i}$, and 
   $\pi^{(N)}=\prod_{i\in I_N}\pi_i$ is the set of   policy profiles,  
  where  player $i$'s policy class 
  $\pi_i$ contains   linear policies   defined by
\bb
\label{eq:policy_lq}
\pi_i=\{\phi:[0,T]\times\sR^{n_x}\to \sR^{k_i}\mid \phi(t,x)=K_i(t)x, \; K_i \in C([0,T];  \sR^{k_i\times n_x})\}.
\ee
With an abuse of notation,
we identify 
 a policy $\phi_i\in \pi_i$  with the feedback map $K_i $, 
and 
identify 
a policy profile 
  $\phi\in \pi^{(N)}$ with  feedback maps $K=(K_i)_{i\in I_N}$.
For each $i\in I_N$, define  player $i$'s   value function 
$V_i:[0,T]\times \sR^{n_x} \times \pi^{(N)}\to \sR$
by
 \begin{align}
 \label{eq:value_lq} 
 \begin{split}
  V^\phi_i(t,x)
  & =
   \frac{1}{2}    \sE  \bigg[\int_t^T
\left(\big(X^{t,x,\phi}_s\big)^\top  Q_i(t) X^{t,x,\phi}_s
 +
\big(K(s)X^{t,x,\phi}_s\big)^\top R_i(t) K(s) X^{t,x,\phi}_s \right)
\d s
\\
&\qq 
+ 
(X^{t,x,\phi}_T)^\top  G_i X^{t,x,\phi}_T
\bigg], 
\end{split}
\end{align}
  where  $X^{t,x,\phi}$ satisfies  the following  dynamics:
\bb
\label{eq:state_lq}
\d  X_s =\left(A(s)X_s+B(s)K(s) X_s    \right)\d s +\sigma \d W_s, \quad s\in [t,T]; \quad X_t=x,
\ee
and  $W$ is a given $n_w$-dimensional standard Brownian motion. 
We assume the following standard conditions   for the coefficients of \eqref{eq:value_lq} and \eqref{eq:state_lq}: 
$A\in C([0,T];\sR^{n_x\times n_x})$, 
$B\in C([0,T];\sR^{n_x\times n_a})$,
$\sigma\in \sR^{n_x\times n_w}$, and for all $i\in I_N$, 
$Q_i\in C([0,T];\sS^{n_x})$,
$R_i\in C([0,T];\sS^{n_a})$
and $G_i\in  \sS^{n_x}$. 
 Here for ease of exposition and simple characterization, we focus on LQ games with (possibly degenerate)  uncontrolled additive noises. As will be clear from the analysis that 
similar characterizations can be established for LQ games with controlled multiplicative noises.

 \paragraph{Probabilistic  characterization in  LQ  games.}
  
  To characterize the      LQ games,  
 write  $B$ and $R_i$ in the following block form:
\bb\label{eq:BR_block}
B= (B_1, \ldots,   B_N),
\quad 
 R_i=((R_i)_1, \ldots,   (R_i)_N)
 =((R_i)_{h\ell})_{h,\ell\in I_N}
\ee
with   $B_\ell \in C([0,T]; \sR^{n_x\times k_\ell})$,  
 $(R_i)_{ \ell} \in C([0,T];  \sR^{n_a\times k_\ell})$
 and  $(R_i)_{ h\ell} \in C([0,T];  \sR^{k_h\times k_\ell})$
 for all $h,\ell\in I_N$. 
Fix   $i,h, \ell\in I_N$ and $(t,x)\in [0,T]\times \sR^{n_x}$.  
The probabilistic  characterization \eqref{eq:value_derivative_1st}
of  
  $ \frac{\delta V^{t,x}_i}{\delta \phi_h} (\phi; \phi'_h) $   simplifies into 
\begin{align}
  \label{eq:value_derivative_1st_lq}
  \begin{split}
 \frac{\delta V^{t,x}_i}{\delta \phi_h} (\phi;\phi'_h)
&    = \sE \bigg[\int_t^T 
 \bigg( X_s^\top Q_i  Y^{h}_s
 +(K X_s)^\top  
 \Big(R_i  K Y^{h}_s 
 +(R_i)_h   K'_h X_s \Big)
 \bigg)
  \d s  + X_T^\top G_i  Y^{h}_T\bigg],
\end{split}
\end{align}
where 
$(R_i)_{ h}$ is defined in \eqref{eq:BR_block},
  $X=X^{t,x,\phi}$ is the   state process satisfying      \eqref{eq:state_lq}, 
$Y^{h}$ is  the sensitivity process of $X$ with respect $K'_h$ satisfying 
(cf.~\eqref{eq:X_sensitivity_first}): 
 \begin{align} 
\label{eq:X_sensitivity_first_lq}
\begin{split}
\d Y^{h}_s & = \big( (A +B K )  Y^{h}_s   
+B_h K'_h X_s   \big)  \d s 
\quad \fa  s\in [t,T]; \q 
Y^{h}_t=0.
 \end{split}
 \end{align}
Moreover, the  probabilistic  characterization
  \eqref{eq:value_derivative_2nd}
of    
  $ \frac{\delta^2 V^{t,x}_i}{\delta \phi_h\delta \phi_\ell}(\phi; \phi'_h, \phi''_\ell)   $
simplifies into  
\begin{align}
  \label{eq:value_derivative_2nd_lq}
 \begin{split} 
 \frac{\delta^2 V^{t,x}_{i}}{\delta \phi_h \delta \phi_\ell} (\phi;\phi'_h, \phi''_\ell)
&    = \sE \left[\int_t^T
\left( Y^\ell_s  Q_i  Y^h_s
 +\big(K  Y^\ell_s +E_\ell K''_\ell X_s\big)^\top R_i 
 \big(K Y^h_s +E_h K'_h X_s\big)
\right)  \d s\right]
 \\
&\q   +  \sE \bigg[\int_t^T
\left(
X^\top_s Q_i Z_s
+(K  X_s)^\top R_i 
\big( K Z_s +E_\ell K''_\ell  Y^h_s+E_hK'_h  Y^\ell_s\big)
\right)
 \d s  \bigg]
 \\
 &\q 
+\sE\left[ (Y^\ell_T  )^\top G_i Y^h_T +X^\top_T G_i Z_T\right],
 \end{split}
\end{align}  
where     $E_j\in \sR^{n_a\times k_j}$, $j\in \{h,\ell\}$,  is a block row matrix defined as in \eqref{eq:control_2nd},
 $Z$ is  the sensitivity process of $X$ with respect $K'_h$ and $K''_\ell$ satisfying 
(cf.~\eqref{eq:X_sensitivity_second}): 
\begin{align} 
\label{eq:X_sensitivity_second_lq}
\begin{split}
\d Z_s & = \left( (A+BK) Z_s   +  B_\ell K''_\ell Y^h_s
 +B_h K'_h Y^\ell_s 
 \right)\d s
\q \fa s\in [t,T];
\q Z_t=0.
 \end{split}
 \end{align}
 
  \paragraph{PDE  characterization in  LQ  games.}
For  the PDE characterization,
define for all 
  $\phi\in \pi^{(N)}$, $\phi'_h=K'_h\in \pi_h$ and $\phi''_\ell=K''_\ell\in \pi_\ell$, 
\begin{subequations}
\label{eq:quadratic_ansatz}
\begin{align}
{{V}^\phi_i}(t,x)&\coloneqq \frac{1}{2}x^\top \Psi_i(t)x+\psi_i(t) \q \fa (t,x)\in [0,T]\times \sR^{n_x}, 
\label{eq:quadratic_value}
\\
{\frac{\delta \ol{ V}^\phi_i}{\delta \phi_h}}(t,x; \phi'_h)&\coloneqq \frac{1}{2}x^\top \Theta^{h}_i(t)x+\theta^{h}_i(t) \q \fa (t,x)\in [0,T]\times \sR^{n_x}, 
\label{eq:quadratic_1st}
\\
{\frac{\delta^2 \ol{V}^\phi_i}{\delta \phi_h\delta \phi_\ell}}(t,x; \phi'_h, \phi''_\ell)&\coloneqq \frac{1}{2}x^\top \Lambda^{h,\ell}_i(t)x+\lambda^{h, \ell}_i(t)\q \fa (t,x)\in [0,T]\times \sR^{n_x},
\label{eq:quadratic_2nd}
\end{align}
\end{subequations}
%
where $ \Psi_i,  \Theta^h_i ,  \Lambda^{h,\ell}_i  \in C([0,T];\sS^{n_x})$ satisfies  
\begin{subequations}
\label{eq:ode_lq}
 \begin{empheq}[left=\empheqlbrace]{align}
  \dot \Psi_i&+ \left(A+BK\right)^\top \Psi_i+ \Psi_i \left(A+BK\right) 
+Q_i+K^\top R_i K=0 \q \fa t\in [0,T];
\q \Psi_i(T)=G_i,
\label{eq:ode_psi}
\\
\dot \Theta^{h}_i &+ \left(A+BK \right)^\top \Theta^{h}_i+ \Theta^{h}_i \left(A+BK \right) 
 + \Psi_i B_h K'_h+(B_h K'_h)^\top  \Psi_i
 \nb
\\
&   +  K^\top (R_i)_{ h}K'_h
+ (K'_h)^\top \big((R_i)_{ h}\big)^\top K
=0  \q \fa t\in [0,T]; \q \Theta^{h}_i(T)=0, 
\label{eq:ode_theta}
\\ 
\dot \Lambda^{h,\ell}_i &+ \left(A+BK \right)^\top \Lambda^{h,\ell}_i+ \Lambda^{h,\ell}_i \left(A+BK \right) 
\nb \\
&  + \left( \Theta^{h}_iB_\ell K''_\ell +(B_\ell K''_\ell)^\top  \Theta^{h}_i  \right)
+\left(\Theta^{\ell}_i B_h K'_h
 +(B_h K'_h
)^\top  \Theta^{\ell}_i \right) 
\nb \\
& 
 +(K''_\ell)^\top (R_i)_{\ell h} K'_h
 +(K'_h)^\top \big((R_i)_{\ell h}\big)^\top  K''_\ell
=0 \q \fa t\in [0,T]; \q \Lambda^{h,\ell}_i(T)=0, 
\label{eq:ode_lambda}
  \end{empheq}
\end{subequations}
and 
$ \psi_i, \theta^{h}_i, \lambda^{h,\ell}_i\in C([0,T];\sR)$ satisfies 
\bb
\label{eq:zero_order_term}
\left\{
\begin{aligned}
&    \dot \psi_i+ \frac{1}{2}\tr\left(\sigma\sigma^\top \Psi_i \right)=0\q \fa t\in [0,T];
\quad  \psi_i(T)=0, 
\\
&\dot \theta^{h}_i + \frac{1}{2}\tr\left(\sigma\sigma^\top \Theta^{h}_i  \right)=0\q \fa t\in [0,T];
\quad \theta^{h}_i(T)=0,
\\
&   \dot \lambda^{h,\ell}_i + \frac{1}{2}\tr\left(\sigma\sigma^\top \Lambda^{h,\ell}_i  \right)=0\q \fa t\in [0,T];
\quad \lambda^{h,\ell}_i(T)=0.
\end{aligned}
\right.
\ee
In \eqref{eq:ode_lq} and \eqref{eq:zero_order_term},  
we use a dot to represent the derivative with respect to time,
and use $\tr(\cdot)$ to denote the trace of a matrix.
  Additionally, for the sake of notational simplicity,
we omit 
the dependence  of  $\Theta^{h}_i, \theta^{h}_i$  
on $K'_h$, 
and the dependence of $\Lambda^{h,\ell}_i, \lambda^{h,\ell}_i$   
on $K'_h$ and $K''_\ell$. 
%
%
%
One can easily verify that \eqref{eq:quadratic_value}, \eqref{eq:quadratic_1st} and 
\eqref{eq:quadratic_2nd}
 are the unique solutions  to the PDEs \eqref{eq:pde_value}, 
\eqref{eq:pde_value_1st} and \eqref{eq:pde_value_2nd}, respectively.

The following theorem proves the equivalence of 
Theorems 
  \ref{thm:differential_MPG_proba} and 
   \ref{thm:differential_MPG_pde}  for LQ games. 
  {The proof follows from a direct application of It\^{o}'s  formula,
  whose detail is given in Appendix \ref{sec:proof_lq_equivlance}}.

\begin{Theorem}
\label{thm:lq_characterisation}
For all  $i,h,\ell \in I_N$, 
$\phi\in \pi^{(N)}$, $\phi'_h\in \pi_h$, $\phi''_\ell\in \pi_\ell$
and $(t,x)\in [0,T]\times\sR^{n_x}$,
\bb
\label{eq:equivalence}
 \frac{\delta V^{t,x}_i}{\delta \phi_h} (\phi;\phi'_h)
={\frac{\delta \ol{V}^\phi_i}{\delta \phi_h}}(t,x; \phi'_h),
\quad 
 \frac{\delta^2 V^{t,x}_i}{\delta \phi_h \delta \phi_\ell} (\phi;\phi'_h, \phi''_\ell)
={\frac{\delta^2 \ol{V}^\phi_i}{\delta \phi_h\delta \phi_\ell}}(t,x; \phi'_h, \phi''_\ell).
\ee
That is, Condition \eqref{eq:prob_symmetry} in Theorem
  \ref{thm:differential_MPG_proba} is equivalent to  
  Condition \eqref{eq:pde_symmetry} in Theorem 
  \ref{thm:differential_MPG_pde}. 

Consequently,  if 
   there exists $(t,x)\in [0,T]\times \sR^{n_x}$ such that 
   for all  $i,j \in I_N$ with $i\not =j$, 
$\phi\in \pi^{(N)}$, $\phi'_i\in \pi_i$ and $\phi''_j\in \pi_j$,
\bb
\label{eq:lq_symmetry}
 \frac{1}{2}x^\top \Lambda^{i,j}_i(t)x+ \lambda^{i, j}_i(t) =
 \frac{1}{2}x^\top \Lambda^{j,i}_j(t)x +\lambda^{j, i}_j(t),
\ee
then 
     $\cG_{\rm LQ}$ with initial condition $(t,x)$ is a CLPG.
     Moreover,  $\cG_{\rm LQ}$ is an MPG if and only if  for all  $i,j \in I_N$ with $i\not =j$, 
$\phi\in \pi^{(N)}$, $\phi'_i\in \pi_i$ and $\phi''_j\in \pi_j$,
\bb
\label{eq:lq_symmetry_mpg}
  \Lambda^{i,j}_i(t)   =
  \Lambda^{j,i}_j(t), 
  \q \fa t\in [0,T].
\ee
\end{Theorem}

{ 
Theorem \ref{thm:lq_characterisation}  characterizes LQ  MPGs using   a system of  ODEs  
 parametrized by   all players'   policies.  
In particular, an LQ MPG 
with the policy class   \eqref{eq:policy_lq}
can be directly characterized by the first-order derivatives $\Theta^i_j$ of value functions.

\begin{Corollary}
\label{cor:lq_mpg}
 $\cG_{\rm LQ}$ is an MPG if and only if    for all $i,j \in I_N$, 
  $ (R_i)_{ij }=(R_j)_{ ij} $,
  and 
  for all $\phi\in \pi^{(N)}$ and $\phi'_i\in \pi_i$,
  $
 (  \Theta^i_i-\Theta^i_j )  B_j =0$,
 where 
 $ \Theta^i_i$ and $\Theta^i_j $ are defined by \eqref{eq:ode_theta}
 and $B_j$ is  given in \eqref{eq:BR_block}.
\end{Corollary}

By Corollary \ref{cor:lq_mpg}, one can establish, under suitable controllability assumptions of the system, the equivalence of
the  LQ MPG game and the team  Markov game (introduced in Example \ref{prop:team_MG}).

\begin{Proposition}
\label{prop:lq_nondegenreate}
Suppose  $B= (B_1, \ldots,   B_N)$ satisfies for all $i\in I_N$ and $t\in [0,T]$, 
$B_i(t) $ has full row rank.
Then $\cG_{\rm LQ}$ is an MPG if and only if 
for all $i,j\in I_N$, $Q_i=Q_j$, $R_i=R_j$ and $G_i=G_j$. 
\end{Proposition}

\begin{Remark}
\label{rmk:lq_rank_condition}
  Proposition \ref{prop:lq_nondegenreate}  holds for  multi-dimensional  LQ   games with a general state coefficient $A$,
a  general noise coefficient $\sigma$, and   symmetric cost coefficients $(Q_i, R_i, G_i)_{i \in I_N}$. The proof relies on  
the full rank condition of  $(B_i)_{i \in I_N}$, which by the Hautus lemma 
ensures each player's controllability of the system.

This  controllability allows players to drive the system to any desired state regardless of the policies chosen by other players. It implies  a strong coupling between the players' actions  and hence requires 
the homogeneity of cost functions for a potential game. It suggests that  the strength of interactions among players through dynamics and policies is crucial in determining whether a dynamic game is a potential game. These are intriguing insights about the structure of dynamic potential games. 
Quantifying the intricate interplay between the potential structure and player interactions for general dynamic games will be an intriguing research topic.   
\end{Remark}

}

 \section{Proofs of Main Results}
\label{sec:main_proof}
 
 \subsection{Proofs of Theorems   \ref{thm:separation_value},
 \ref{thm:distributed_game} 
  and \ref{thm:distributed_game_differentiable}}
 \label{sec:proof_separation}
 
 \begin{proof}[Proof of Theorem  \ref{thm:separation_value}]
We only prove the characterization of CLPGs with a fixed initial state $s_0\in \cG$. The characterization of MPGs holds by repeating the argument for all initial states. 

 It is clear that if 
 for all $i\in I_N$, 
 the value function $V^{s_0}_i$ admits the decomposition \eqref{eq:separation_value},
 then   $\cG$ with initial state $s_0$ is a CLPG 
with   potential $\Phi^{s_0}$. 
Hence it remains to prove that \eqref{eq:separation_value} is a necessary condition for 
 $\cG$  being a CLPG with potential $\Phi^{s_0}$.
 Let $i\in I_N$, $\phi_{-i}\in  \pi^{(N)}_{-i}$ 
 and $\phi_i, \phi_i',\phi_i''\in \pi_i$ be arbitrary policies. 
  Then by Definition \ref{def:MPG},  
   \begin{align*}
 V^{s_0}_i((\phi_i, \phi_{-i})) &=   \Phi^{s_0}((\phi_i, \phi_{-i})) -\Phi^{s_0}((\phi_i',\phi_{-i}))+V^{s_0}_i((\phi_i',\phi_{-i})), 
\\
  V^{s_0}_i((\phi_i, \phi_{-i})) &=   \Phi^{s_0}((\phi_i, \phi_{-i})) -\Phi^{s_0}((\phi_i'',\phi_{-i}))+V^{s_0}_i((\phi_i'',\phi_{-i})).
\end{align*}
   This shows that  for any 
   pair of policies $(\phi'_i,\phi''_i)\in \pi_i$,
   $$-\Phi^{s_0}((\phi_i',\phi_{-i}))+V^{s_0}_i((\phi_i',\phi_{-i}))= -\Phi^{s_0}((\phi_i'',\phi_{-i}))+V^{s_0}_i((\phi_i'',\phi_{-i})).
   $$
   Consequently,  
$U^{s_0}_i(\phi_{-i})\coloneqq  -\Phi^{s_0}((\phi_i',\phi_{-i}))+V^{s_0}_i((\phi_i',\phi_{-i}))$ is well-defined,
as it   only depends  on   $\phi_{-i}$ but  is   independent of $\phi'_i\in \pi_i$. 
  \end{proof}

  \begin{proof}[Proof of Theorem  \ref{thm:distributed_game}]
 Observe that 
 for all $i\in I_N$ and $(t,x)\in [0,T]\times\sR^{n_x}$, 
 by   \eqref{eq:cost_dist_i} and \eqref{eq:distributed_cost_decomp},
$
 V^{t,x}_i(\phi) =  \Phi^{t,x}(\phi)+U^{t,x}_i(\phi_{-i}),
$
where 
 \begin{align*}
U^{t,x}_i(\phi_{-i})
&=  \sE^{\mathbb{P}} \left[\int_t^T U_{f_i}(s,X^{t,x,\phi,-i}_s,\alpha^{t,x,\phi,-i}_s)\d s + U_{g_i}(X^{t,x,\phi,-i}_T)\right],
 \end{align*}
 with $X^{t,x,\phi,-i}= (X^{t,x_j,\phi_j})_{j\not =i}$
 and $\alpha^{t,x,\phi,-i}= (\phi_j(\cdot,X^{t,x_j,\phi_j}))_{j\not =i}$ being the state and control processes 
 of other players. 
By \eqref{eq:state_policy_distributed} and the definition of distributed policies,
 the states $(X^{t,x,\phi,\ell})_{\ell\in I_N}$
 are decoupled  and hence the term $U^{t,x}_i(\phi_{-i})$ depends only on $\phi_{-i}$ and is independent of $\phi_i$. 
 This along with  Theorem \ref{thm:separation_value}
 implies  that 
 $\Phi$  is a potential function for   $\cG_{\rm dist}$.
 \end{proof}
 
{ 
 \begin{proof}[Proof of Theorem  \ref{thm:distributed_game_differentiable}]
 Recall that   by \cite[Theorem 1.3.1]{facchinei2003finite}, 
 for a given continuously differentiable function $H:\sR^n\to \sR^n$,
 there exists a  function $\Phi:\sR^n\to \sR$ such that $H(z) =\nabla  \Phi (z) $ for all $z\in \sR^n$ if and only if the Jacobian matrix of $H$   is symmetric for all $z\in \sR^n$. In this case, $\Phi:\sR^n\to \sR$ can be chosen as  
 $$
 \Phi(z)\coloneqq \int_0^1 H(z^0+t(z-z^0))^\top (z-z^0)\d t,
 $$  
 for any given $z^0\in \sR^n$. The desired conclusion follows by applying this theorem with different choices of $H$. 
 
First,   
recall that  $n_x+n_a= \sum_{i\in I_N} (n_i+k_i)$ 
and 
for each $t\in [0,T]$,
define   $H_{f,t}: \sR^{n_x+n_a}  \to \sR^{n_x+n_a} $ such that  
$
(H_{f,t}(x,a) )_i\coloneqq (\partial_{(x_i,a_i)} f_i)(t, x,a)
$ 
for all $i\in I_N$.
The symmetry condition \eqref{eq:symmetric_hessian_distributed}
implies that the Jacobian matrix of 
$H_{f,t}$ is symmetric for all $(x,a)\in \sR^{n_x+n_a} $. 
 Hence applying \cite[Theorem 1.3.1]{facchinei2003finite} with $H=H_{f,t}$ implies that 
 $H_{f,t} = \nabla F(t,\cdot)$ with $\nabla F$ defined in \eqref{eq:F_G_distributed}. 
In particular, for all $i\in I_N$, $(\partial_{(x_i,a_i)} f_i)(t, x,a)=(\partial_{(x_i,a_i)} F)(t,x,a) $ 
for all $(t,x,a)\in [0,T]\times\sR^{n_x}\times \sR^{n_a}$. 
Thus  the function 
$U_{f_i}\coloneqq f_i- F$ satisfies 
$\partial_{(x_i,a_i)} U_{f_i}\equiv 0$, which along the the mean-value theorem implies that 
  $U_{f_i} $ is independent of $(x_i,a_i)$.
 Hence the function $F$ in \eqref{eq:F_G_distributed} satisfies the condition  \eqref{eq:distributed_cost_decomp}.
  The claim that $G$ in \eqref{eq:F_G_distributed} satisfies the condition  \eqref{eq:distributed_cost_decomp}
  follows   from similar arguments.
  \end{proof}
}

  \subsection{Proof   of Theorem \ref{thm:symmetry_value_sufficient}}
\label{sec:proof_abstract_MPG}

\begin{proof}[Proof of Theorem \ref{thm:symmetry_value_sufficient}]

We only prove the characterization of CLPGs, as the characterization of MPGs follows by repeating the argument for all initial states. 
Throughout this proof,  we denote by $s$ the fixed initial state $s_0$
for simplicity.

For each 
$i\in I_N$ and $\phi'_i\in \pi_i$, 
using Condition \ref{item:joint_continuity}
and applying   Lemma \ref{lemma:multi-dimension_derivative}
to $  \frac{\delta V_i^{s}}{\delta\phi_i}(\cdot; {\phi}'_i)$ 
 yield that  
   for all 
$\phi ,z \in \pi^{(N)}$,   
$[0,1]\ni r\mapsto  \frac{\delta V_i^{s}}{\delta\phi_i}(z+r( \phi -z); {\phi}'_i)\in \sR$ 
is differentiable  
and  
\bb
 \label{eq:chainrule_p-Vi}
 \frac{\d}{\d r}   \frac{\delta V_i^{s}}{\delta\phi_i}(z+r( \phi -z);  {\phi}'_i) 
 =\sum_{j=1}^N \frac{\delta^2 V_i^{s}}{\delta\phi_i\delta\phi_j}
 (z+r( \phi-z);  {\phi}'_i, \phi_j-z_j).
 \ee
Moreover, as   the second-order derivatives are bilinear  with respect to the last two arguments,    
one can assume without loss of generality   that 
Conditions \ref{item:integrable_p-Vi} and 
\ref{item:symmetry-Vi}  
and \eqref{eq:chainrule_p-Vi}
 hold    for all $  {\phi}'_i\in \spn( \pi_i)$
and $  {\phi}''_j\in \spn( \pi_j)$. 

Note that  $\Phi^s:    \pi^{(N)} \to \sR$ in   \eqref{eq:potential_variation}
  is well-defined, as 
  $r\mapsto    \frac{\delta V_j^{s}}{\delta\phi_j} (z+r( \phi-z) ; \phi_j-z_j)$ is continuous on $[0,1]$ for all $j\in I_N$. 
  We first prove    that for all   $\phi=(\phi_i)_{i\in I_N}\in \pi^{(N)}$, $i\in I_N$ and $\phi'_i\in \pi_i$, 
$$
\frac{\d }{\d \eps }\Phi^{s}(\phi^\eps)\bigg\vert_{\eps=0}
 =\frac{\delta V^{s}}{\delta \phi_i} (\phi ; \phi'_i-\phi_i)
 \q \textnormal{with $\phi^\eps \coloneqq (\phi_i+\eps(\phi'_i-\phi_i),\phi_{-i})$.}
$$
Observe that by \eqref{eq:potential_variation}, for all      $\eps\in (0,1]$,
\begin{align*}
  \Phi^{s} (\phi^{\eps} ) 
-  \Phi^{s}(\phi) 
&= 
\int_0^1\sum_{j=1}^N  \frac{\delta V_j^{s}}{\delta\phi_j} (z+r( \phi^{\eps}-z); \phi_j+\eps \delta_{ji}(\phi'_i-\phi_i)-z_j) \d r
\\
&\quad -
 \int_0^1\sum_{j=1}^N  \frac{\delta V_j^{s}}{\delta\phi_j} (z+r(\phi-z);\phi_j-z_j)  \d r,
 \end{align*}
 where $\delta_{ji}=0$  if $j\not =i$ and 
 $\delta_{ii}=1$.
Then by $z+r( \phi^{\eps}-z)\in \pi^{(N)}$,
for all   $ \eps\in (0,1]$, 
\begin{align}
\label{eq:dphi_i}
\begin{split}
 \frac{ \Phi^{s} (\phi^{ \eps} ) 
-  \Phi^{s}(\phi)  }{\eps} 
&
= 
\frac{1}{\eps} \int_0^1\sum_{j=1}^N  \left(  \frac{\delta V_j^{s}}{\delta\phi_j}(z+r( \phi^{ \eps}-z);\phi_j-z_j)
-  \frac{\delta V_j^{s}}{\delta\phi_j}(z+r( \phi-z);\phi_j-z_j)
\right)  \d r \\
 &\q  +    \frac{1}{\eps} \int_0^1\sum_{j=1}^N \eps \delta_{ji} \frac{\delta V_j^{s}}{\delta\phi_j}(z+r( \phi^{\eps}-z) ; \phi'_i-\phi_i)  \d r 
\\
& =    \int_0^1  \sum_{j=1}^N \frac{1}{ \eps}
\left(  \frac{\delta V_j^{s}}{\delta\phi_j} (z+r( \phi^{ \eps}-z);\phi_j-z_j) 
- \frac{\delta V_j^{s}}{\delta\phi_j} (z+r( \phi-z);\phi_j-z_j)\right)    \d r  
 \\
 &\q  +    \int_0^1  \frac{\delta V_i^{s}}{\delta\phi_i}(z+r( \phi^{\eps}-z) ; \phi'_i-\phi_i)   \d r. 
 \end{split}
\end{align}

To  send $\eps\to 0$ in \eqref{eq:dphi_i}, note that 
for all $\eps\in [0,1]$, 
$(z+r( \phi^{\eps}-z))_{-i}=z_{-i}+r(\phi_{-i}-z_{-i})$ and 
\begin{align}
\label{eq:convex_combination_r_eps}
\begin{split}
(z+r( \phi^{\eps}-z))_i
&=z_i + r (\phi_i+\eps(\phi'_i-\phi_i)-z_i)
\\
&=z_i+r(\phi_i-z_i)+\eps \big((z_i+r(\phi'_i-z_i))-(z_i+r(\phi_i-z_i))\big)
\end{split}
\end{align}
with  $z_i+r(\phi_i-z_i),z_i+r(\phi'_i-z_i)\in \pi_i$. 
Thus for all $j\in I_N$,
as $\phi_j-z_j  \in \spn(\pi_j)$,
  the twice differentiability of  $V^{s}_j$ and Lemma  \ref{lemma:derivative_line}  imply that 
$\eps\mapsto   \frac{\delta V_j^{s}}{\delta\phi_j} (z+r( \phi^{\eps}-z); \phi_j-z_j ) $ 
 is differentiable  on   $  [0,1]$ and 
\begin{align*}
\frac{\d }{\d \eps}\frac{\delta V_j^{s}}{\delta\phi_j}  (z+r( \phi^{\eps}-z); \phi_j-z_j )  
&= \frac{\delta^2 V_j^{s}}{\delta\phi_j\delta\phi_i}
(z+r( \phi^{\eps }-z); \phi_j-z_j , r(\phi'_i-\phi_i))
\\
&= \frac{\delta^2 V_j^{s}}{\delta\phi_j\delta\phi_i}
(z+r( \phi^{\eps }-z); \phi_j-z_j  , \phi'_i-\phi_i)r,  
\end{align*}
where the last identity used 
the linearity of $ \frac{\delta^2 V_j^{s}}{\delta\phi_j\delta\phi_i}$ in its last component.
Hence,  by 
the mean value theorem,
for all    $\ \eps\in (0,1]$,
\begin{align}
\label{eq:MVT_eps}
\begin{split}
&\left|\frac{1}{ \eps}
\left(  \frac{\delta V_j^{s}}{\delta\phi_j} (z+r( \phi^{\eps}-z);\phi_j-z_j)- \frac{\delta V_j^{s}}{\delta\phi_j} (z+r( \phi-z);\phi_j-z_j)\right)
\right|
\\
&\q \le
\sup_{ r,\eps\in [0,1]}\left| \frac{\delta^2 V_j^{s}}{\delta\phi_j\delta\phi_i}(z+r( \phi^{ \eps}-z);\phi_j-z_j, \phi'_i-\phi_i)r \right|
\\
& \q = 
\sup_{ r, \eps\in [0,1]}\left| \frac{\delta^2 V_i^{s}}{\delta\phi_i\delta\phi_j}(z+r( \phi^{ \eps}-z);\phi'_i-\phi_i,\phi_j-z_j)r \right|
<\infty,
\end{split}
\end{align}
where the last inequality follows from  Conditions   \ref{item:integrable_p-Vi} and \ref{item:symmetry-Vi}. 
Similarly,  as $\phi'_i-\phi_i\in \spn(\pi_i)$,
by the twice differentiability of  $V^{s}_i$ (see \eqref{eq:convex_combination_r_eps}),
for all $r\in (0,1)$,
\begin{align*}  
 \lim_{\eps\downarrow 0}  \frac{\delta V_i^{s}}{\delta\phi_i}(z+r( \phi^{\eps}-z)  ; \phi'_i-\phi_i)
  =   \frac{\delta V_i^{s}}{\delta\phi_i}(z+r( \phi-z) ; \phi'_i-\phi_i),
 \end{align*}
 and for all $r, \eps\in [0,1]$,  by  the mean value theorem,
 \begin{align*}
&  \left|  \frac{\delta V_i^{s}}{\delta\phi_i}(z+r( \phi^{ \eps}-z)  ; \phi'_i-\phi_i)\right|
  \\
  &\q \le \left|  \frac{\delta V_i^{s}}{\delta\phi_i}(z+r( \phi-z)  ; \phi'_i-\phi_i)\right|
  +\left|  \frac{\delta^2 V_i^{s}}{\delta\phi_i\delta\phi_i}(z+r( \phi^\eps-z)  ; \phi'_i-\phi_i,\phi'_i-\phi_i)r\right|,
 \end{align*}
 which is uniformly bounded with respect to $(r,\eps)\in [0,1]^2$  
 due to  \eqref{eq:chainrule_p-Vi} and   Condition     \ref{item:integrable_p-Vi}.  
  Hence, letting     $\eps\to 0$ in  \eqref{eq:dphi_i} and 
 using  Lebesgue's dominated convergence theorem give  
  \begin{align*}
\begin{split}
 \frac{\d }{\d \eps}  \Phi^{s} (\phi^{\eps  } ) \bigg\vert_{\eps=0} 
&  =    \int_0^1  \sum_{j=1}^N  \frac{\delta^2 V_j^{s}}{\delta\phi_j\delta\phi_i}(z+r( \phi-z);\phi_j-z_j, \phi'_i-\phi_i) r  \d r  
+ \int_0^1      \frac{\delta V_i^{s}}{\delta\phi_i}(z+r( \phi
-z) ; \phi'_i-\phi_i)   \d r
 \\
 &  =    \int_0^1  \sum_{j=1}^N   \frac{\delta^2 V_i^{s}}{\delta\phi_i\delta\phi_j}
 (z+r( \phi-z); \phi'_i-\phi_i, \phi_j-z_j) r  \d r  
+ \int_0^1    \frac{\delta V_i^{s}}{\delta\phi_i}(z+r( \phi-z) ; \phi'_i-\phi_i)   \d r
\\
 &  =    \int_0^1   r\frac{\d}{\d r} \left( \frac{\delta V_i^{s}}{\delta\phi_i}(z+r( \phi-z); \phi'_i-\phi_i) \right)  \d r  
+ \int_0^1    \frac{\delta V_i^{s}}{\delta\phi_i}(z+r( \phi-z) ; \phi'_i-\phi_i)   \d r,
 \end{split}
\end{align*}
where 
the second to last identity used  
Condition   \ref{item:symmetry-Vi}   
  and the last identity used  \eqref{eq:chainrule_p-Vi}.  
The integration by parts  formula then yields  
  \begin{align}
  \label{eq:dPhi=dV}
\begin{split}
&  \frac{\d }{\d \eps}  \Phi^{s} (\phi^{\eps  } ) \bigg\vert_{\eps=0} 
 =\frac{\delta V_i^{s}}{\delta\phi_i}(\phi ; \phi'_{i}-\phi_i).
 \end{split}
\end{align}

Now let 
   $i\in I_N$,  $ \phi'_i\in \pi_i$
  and $\phi\in \pi^{(N)}$. 
 For each $\eps\in [0,1]$,  
  let $\phi^\eps = (\phi_i+\eps(\phi'_i-\phi_i),\phi_{-i})\in \pi^{(N)}$.
  By the  differentiability of    $V_i^{s}$ and  Lemma  \ref{lemma:derivative_line},
  $\frac{\d }{\d \eps } V_i^{s}(\phi^\eps)=\frac{\delta V_i^{s}}{\delta\phi_i}(\phi^\eps; \phi'_i-\phi_i)  $ for all  $ \eps\in  [0,1]$,
  and  
  $\eps\mapsto  \frac{\delta V_i^{s}}{\delta\phi_i}(\phi^\eps; \phi'_i-\phi_i)$ 
  is differentiable on $[0,1]$.
   This implies that  $\eps\mapsto V_i^{s}(\phi^\eps) $ is continuously differentiable   on $[0,1]$. 
Then 
by Lemma  \ref{lemma:derivative_line} and \eqref{eq:dPhi=dV},
$[0,1]\ni \eps\mapsto \Phi^{s}(\phi^\eps)\in \sR$ is also 
continuously differentiable 
with $\frac{\d}{\d \eps}\Phi^{s}(\phi^\eps) =\frac{\d }{\d \eps } V_i^{s}(\phi^\eps)$ for all $\eps\in [0,1]$.
  Hence by the fundamental theorem of calculus, 
 \begin{align*}
  V_i^{s}((\phi'_i,\phi_{-i}))- V_i^{s}((\phi_i,\phi_{-i}))
 &  
 =\int_0^1 \frac{\delta V_i^{s}}{\delta\phi_i}(\phi^\eps; \phi'_i-\phi_i)   \d \eps 
 =\int_0^1 \frac{\delta \Phi^{s}}{\delta\phi_i}(\phi^\eps; \phi'_i-\phi_i)   \d \eps
 \\
&  =\Phi^{s}((\phi'_i,\phi_{-i}))- \Phi^{s}((\phi_i,\phi_{-i})).
 \end{align*}
  This proves that $\Phi^s$ defined  in \eqref{eq:potential_variation} 
  is a potential function of the game $\cG$ with initial state $s$. 
\end{proof}

 \subsection{Proof of Theorem \ref{thm:differential_MPG_pde}} 
\label{sec:proof_sde_analytic}


 

\begin{proof}[Proof of Theorem \ref{thm:differential_MPG_pde} Item \ref{item:pde_first}]
 Fix  $i,k\in I_N$,  $\phi\in \pi^{(N)}$ and $\phi'_k\in \spn(\pi_k)$.
Let  $\eta=\beta\gamma$.
By (H.\ref{assum:Holder_regularity}),  the H\"{o}lder continuity of $\phi, \phi'_k$
and $V^\phi_i$, it is clear that all coefficients of \eqref{eq:pde_value_1st} are in $C^{ {\eta}/{2}, \eta}([0,T]\times \sR^{n_x};\sR)$. 
Thus by    \cite[Theorem 5.1, p.~320]{ladyzhenskaia1988linear},
\eqref{eq:pde_value_1st} admits a unique classical solution   
 in $C^{1+ {\eta}/{2},2+\eta}([0,T]\times \sR^{n_x}; \sR)$.
This shows that the map 
$\frac{\delta V_i}{\delta \phi_k}$ in  \eqref{eq:value_1st_pde}
is well-defined. 
Moreover,
as  \eqref{eq:pde_value_1st} is linear in terms of  $\phi'_k$, 
$ \spn(\pi_k)\ni\phi'_k\mapsto \frac{\delta V^\phi_i}{\delta \phi_k}(t,x; \phi'_k)\in \sR$
is linear.  

Now fix $\phi'_k\in \pi_k$. For    each $\eps\in [0,1]$, let 
$\phi^\eps=(\phi_k+\eps(\phi'_k-\phi_k),\phi_{-k})$. 
We claim that
\bb
\label{eq:derivative_1st_holder}
\lim_{\eps\downarrow 0 }\bigg\|\frac{V^{\phi^\eps}_i(\cdot) -V^{\phi}_i(\cdot)}{\eps}-\frac{\delta V^\phi_i}{\delta \phi_k}(\cdot; \phi'_k-\phi_k)\bigg\|_{1+\beta\gamma/2,2+\beta\gamma}=0.
\ee
Recall that     $V^{\phi^\eps}_i$, $\eps\in (0,1]$, satisfies 
the following PDE:
 for all $(t,x)\in [0,T]\times \sR^{n_x}$, 
\begin{align}
\label{eq:pde_value_eps}
\begin{split}
&\p_t V^{\phi^\eps}_i (t,x) + \cL^{\phi^\eps} V^{\phi^\eps}_i (t,x)+f_i(t,x,\phi^\eps(t,x))=0;   
\q V^{\phi^\eps}_i (T,x)= g_i(x).
\end{split}
\end{align}
Then by \eqref{eq:pde_value_1st}, 
 $U^\eps(\cdot)=\frac{V^{\phi^\eps}_i(\cdot) -V^{\phi}_i(\cdot)}{\eps}-\frac{\delta V^\phi_i}{\delta \phi_k}(\cdot; \phi'_k-\phi_k)$  satisfies  for all $(t,x)\in [0,T]\times \sR^{n_x}$, 
\begin{align}
\label{eq:pde_U_eps}
\begin{split}
&\p_t U^{\eps} (t,x) +\cL^{\phi^\eps} U^{\eps} (t,x)  + F^\eps(t,x )=0;
\quad U^{\eps} (T,x)= 0,
 \end{split}
\end{align}
where 
\begin{align}
\label{eq:F_eps}
\begin{split}
 F^\eps(t,x ) 
 &=
  \frac{(\cL^{\phi^\eps}-\cL^{\phi})V(t,x)+f_i(t,x,\phi^\eps(t,x))-f_i(t,x,\phi(t,x))}{\eps}
 \\
 &\quad - (\p_{a_k} H_i)\big(t,x,(\p_x V^\phi_i)(t,x),(\p_{xx} V^\phi_i)(t,x),\phi(t,x)\big)^\top (\phi'_k-\phi_k)(t,x)
 \\
 & \q  +(\cL^{\phi^\eps}-\cL^{\phi})\frac{\delta V^\phi_i}{\delta \phi_k}(t,x; \phi'_k-\phi_k).
 \end{split}
\end{align}
By Taylor's theorem,  
  for all $(t,x)\in [0,T]\times \sR^{n_x}$, 
  \begin{align*}
&  \frac{f_i(t,x,\phi^\eps(t,x))-f_i(t,x,\phi(t,x))}{\eps}-(\p_{a_k} f_i)\big(t,x, \phi(t,x)\big)^\top (\phi'_k-\phi_k)(t,x)
\\
&= 
\eps  \int_0^1  (\phi'_k-\phi_k)(t,x)^\top(\p_{a_ka_k} f_i)(t,x,\phi^{r\eps}(t,x))  (\phi'_k-\phi_k)(t,x) \d r,
\end{align*}
  which along with the H\"{o}lder continuity of $\p_{a_ka_k} f_i$, $\phi_k$ and $\phi'_k$
  shows that 
  there exists  $C\ge 0$ such that 
  $\|\frac{f_i(\cdot,\phi^\eps(\cdot))-f_i(\cdot,\phi(\cdot))}{\eps}-(\p_{a_k} f_i)\big(\cdot, \phi(\cdot)\big)^\top (\phi'_k-\phi_k)(\cdot)\|_{\eta/2,\eta}\le C\eps$ for all $\eps\in (0,1]$. 
  Similarly, by using $V, \frac{\delta V^\phi_i}{\delta \phi_k}(\cdot; \phi'_k-\phi_k)\in C^{1+ {\eta}/{2},2+\eta}([0,T]\times \sR^{n_x};\sR)$, 
one can prove the
$\|\cdot\|_{\eta/2,\eta}$-norm of the 
 remaining terms in  \eqref{eq:F_eps} converges to zero as $\eps\to 0$.
As 
$\phi^\eps $ is a convex combination of $\phi$ and $(\phi'_k,\phi_{-k})$ (see \eqref{eq:policy_pde}),
  $\sup_{\eps\in [0,1]}\|\phi^\eps\|_{\gamma/2,\gamma}<\infty$.
Hence applying 
 the Schauder  a-priori estimate  \cite[Theorem 5.1, p.~320]{ladyzhenskaia1988linear}
 to \eqref{eq:pde_U_eps}
 implies that 
there exists $C\ge 0$ such that
for all $\eps\in (0,1]$,
$\|U^\eps\|_{1+\eta/2,2+\eta}\le C\|F^\eps\|_{\eta/2,\eta}\le C\eps$.
This  proves \eqref{eq:derivative_1st_holder},
which implies that  
$\frac{\d}{\d \eps}  {  V^{\phi^\eps}_i} (t,x ) \big\vert_{\eps=0} =
  \frac{\delta V^{\phi}_i}{\delta \phi_k }(t,x; \phi'_k-\phi_k )$.
  \end{proof}

\begin{proof}[Proof of Theorem \ref{thm:differential_MPG_pde} Item \ref{item:pde_second}]
 Fix  $i,k,\ell \in I_N$,  $\phi\in \pi^{(N)}$,  $\phi'_k\in \spn(\pi_k)$
 and   $\phi''_\ell\in \spn(\pi_\ell)$.
 Let $\eta=\beta\gamma$. 
 Recall that    $V^{\phi}_i(\cdot)$, 
 $ \frac{\delta V^\phi_i}{\delta \phi_k}(\cdot; \phi'_k)$ and   $ \frac{\delta V^\phi_i}{\delta \phi_\ell}(\cdot; \phi''_\ell)$
 are 
 in   $C^{1+ {\eta}/{2},2+\eta}([0,T]\times \sR^{n_x}; \sR)$,
 as shown in       
   the proof of Theorem \ref{thm:differential_MPG_pde} Item \ref{item:pde_first}.
 Hence 
the well-posedness of  \eqref{eq:pde_value_2nd} in $C^{1+ {\eta}/{2},2+\eta}([0,T]\times \sR^{n_x}; \sR)$      
follows from  a similar argument as that for \eqref{eq:pde_value_1st},
which subsequently implies that  
  the map 
$\frac{\delta^2 V_i}{\delta \phi_k\delta \phi_\ell}$ in  \eqref{eq:value_2nd_pde}
is well-defined. 
Moreover, by 
 \eqref{eq:pde_value_1st},
 for all $k\in I_N$, 
$\phi'_k\mapsto \big(\p_x \frac{\delta V^\phi_i}{\delta \phi_k}\big)(t,x; \phi'_k)$
and 
$\phi'_k\mapsto  \big(\p_{xx} \frac{\delta V^\phi_i}{\delta \phi_k}\big)(t,x; \phi'_k)$
 are  linear. 
This shows that  \eqref{eq:pde_value_2nd} is bilinear in terms of $\phi'_k$ and $\phi''_\ell$,
and hence the bilinearity of 
$   (\phi'_k, \phi''_\ell)\mapsto   \frac{\delta^2 V^\phi_i}{\delta \phi_k\phi_\ell}(t,x; \phi'_k,\phi''_\ell) $.

Now 
fix  $\phi'_k\in \spn(\pi_k)$ and $\phi''_\ell\in \pi_\ell$.
For all   $\eps\in [0,1]$,  let 
$\phi^\eps=(\phi_\ell+\eps(\phi''_\ell-\phi_\ell),\phi_{-\ell})$. 
We aim to  prove   
\bb
\label{eq:derivative_2nd_sup}
\lim_{\eps\downarrow 0 }\bigg\|
\frac{ 1}{\eps}\bigg(\frac{\delta V^{\phi^\eps}_i}{\delta \phi_k}(\cdot; \phi'_k ) - \frac{\delta V^{\phi}_i}{\delta \phi_k}(\cdot; \phi'_k )\bigg)-  \frac{\delta^2 V^{\phi}_i}{\delta \phi_k\delta \phi_\ell}(\cdot; \phi'_k,\phi''_\ell-\phi_\ell )
\bigg\|_{0}=0.
\ee
By   \eqref{eq:pde_value_1st} and  \eqref{eq:pde_value_2nd}, 
 for all $\eps\in (0,1]$,
$$W^\eps(\cdot)=\frac{ 1}{\eps}\left(\frac{\delta V^{\phi^\eps}_i}{\delta \phi_k}(\cdot; \phi'_k ) - \frac{\delta V^{\phi}_i}{\delta \phi_k}(\cdot; \phi'_k )\right)-  \frac{\delta^2 V^{\phi}_i}{\delta \phi_k\delta \phi_\ell}(\cdot; \phi'_k,\phi''_\ell-\phi_\ell )$$  
satisfies for all  $(t,x)\in [0,T]\times \sR^{n_x}$,
\begin{equation}
    \label{eq:PDE_W_eps}
\p_t W^{\eps} (t,x) +\cL^{\phi^\eps} W^{\eps} (t,x)  + G^\eps(t,x )=0;
\ \ \ \  W^{\eps} (T,x)= 0,
\end{equation}
where $G^\eps(t,x )=  G^\eps_1(t,x )+G^\eps_2(t,x )+G^\eps_3(t,x )$
satisfies for all $\bar{x}= (t,x)\in [0,T]\times\sR^{n_x}$,  
 \begin{align}
G^\eps_{1}(\bar{x}) 
 &=
  \frac{1}{\eps}
(\cL^{\phi^\eps}-\cL^{\phi}) \frac{\delta V^{\phi}_i}{\delta \phi_k}(\bar{x}; \phi'_k ) 
\label{eq:G1_eps}
\\
 &\quad -
 (\p_{a_\ell} L)\left(\bar{x}, \left(\p_x \frac{\delta V^\phi_i}{\delta \phi_k}\right)(\bar{x}; \phi'_k) ,
  \left(\p_{xx} \frac{\delta V^\phi_i}{\delta \phi_k}\right)(\bar{x}; \phi'_k) ,\phi(\bar{x})\right) ^\top (\phi''_\ell-\phi_\ell)(\bar{x}),
\nb
\end{align}
\begin{align}
G^\eps_2(\bar{x} ) 
 &=
  \frac{1}{\eps}
  \bigg[
(\p_{a_k} H_i)\big(\bar{x},(\p_x V^{\phi^\eps}_i)(\bar{x}),(\p_{xx} V^{\phi^\eps}_i)(\bar{x}),{\phi^\eps}(\bar{x})\big)^\top \phi'_k(\bar{x}) 
\label{eq:G2_eps}
  \\
&\q 
-(\p_{a_k} H_i)\big(\bar{x},(\p_x V^\phi_i)(\bar{x}),(\p_{xx} V^\phi_i)(\bar{x}),\phi(\bar{x})\big)^\top \phi'_k(\bar{x})  
  \bigg]
   \nb
 \\
 &\quad -
 \bigg[
    (\p_{a_k} L)\left(\bar{x}, \left(\p_x \frac{\delta V^\phi_i}{\delta \phi_\ell}\right)(\bar{x}; \phi''_\ell-\phi_\ell) ,
\left(\p_{xx} \frac{\delta V^\phi_i}{\delta \phi_\ell}\right)(\bar{x}; \phi''_\ell-\phi_\ell),
\phi(\bar{x})\right) ^\top
 \phi'_k(\bar{x})
  \nb
 \\
 & \q
 +   (\phi''_\ell-\phi_\ell)(\bar{x})^\top (\p_{a_ka_\ell} H_i)\big(t,x,(\p_x V^{\phi}_i)(\bar{x}),(\p_{xx} V^{\phi}_i)(\bar{x}),\phi(\bar{x})\big)  \phi'_k(\bar{x})
 \bigg],
 \nb
 \\
 G^\eps_{3}(\bar{x} ) 
 &=
(\cL^{\phi^\eps}-\cL^{\phi})\frac{\delta^2 V^{\phi}_i}{\delta \phi_k\delta \phi_\ell}(\bar{x}; \phi'_k,\phi''_\ell-\phi_\ell ).
\label{eq:G_3_eps}
\end{align}
We claim that   $\lim_{\eps\downarrow 0}\|G^\eps\|_0=0$. The convergences of 
 $G^\eps_1$ and  $G^\eps_3$ follow directly from the H\"{o}lder continuity of $B$ and $\Sigma$.
For  $G^\eps_2$, using 
 $H_i(t,x,y,z,a)-H_i(t,x,y',z',a)=L(t,x,y,z,a)-L(t,x,y',z',a)$
 (see \eqref{eq:Hamiltonian_H} and \eqref{eq:L}), 
\begin{align*}
\|G^\eps_2\|_0
 &\le
\bigg\|   \frac{1}{\eps}
  \bigg[
(\p_{a_k} L)\big(\cdot, \p_x V^{\phi^\eps}_i, \p_{xx} V^{\phi^\eps}_i, {\phi^\eps} \big)^\top \phi'_k  
-(\p_{a_k} L)\big(\cdot, \p_x V^{\phi}_i, \p_{xx} V^{\phi}_i, {\phi^\eps} \big)^\top \phi'_k  
  \bigg]
  \\
 &\quad -
    (\p_{a_k} L)\left(\cdot,  \p_x \frac{\delta V^\phi_i}{\delta \phi_\ell}  ,
 \p_{xx} \frac{\delta V^\phi_i}{\delta \phi_\ell} ,
\phi^\eps \right) ^\top
 \phi'_k 
 \bigg\|_0
  \\
&\q 
+ \bigg\| \frac{1}{\eps}
  \left( 
 (\p_{a_k} H_i)\big(\cdot, \p_x V^{\phi}_i, \p_{xx} V^{\phi}_i, {\phi^\eps} \big) 
-(\p_{a_k} H_i)\big(\cdot ,(\p_x V^\phi_i),(\p_{xx} V^\phi_i),\phi\big) 
\right)^\top \phi'_k 
   \\
 & \q
  -   (\phi''_\ell-\phi_\ell)^\top (\p_{a_ka_\ell} H_i)\big(\cdot,(\p_x V^{\phi}_i),(\p_{xx} V^{\phi}_i),\phi\big)  \phi'_k
  \bigg\|_0
   \\
 &\quad 
+\left\| (\p_{a_k} L)\left(\cdot,  \p_x \frac{\delta V^\phi_i}{\delta \phi_\ell}  ,
 \p_{xx} \frac{\delta V^\phi_i}{\delta \phi_\ell} ,
\phi^\eps \right)  
-
    (\p_{a_k} L)\left(\cdot, \p_x \frac{\delta V^\phi_i}{\delta \phi_\ell}  ,
 \p_{xx} \frac{\delta V^\phi_i}{\delta \phi_\ell},
\phi\right) \right\|_0
\| \phi'_k\|_0
\\
&\coloneqq  G^\eps_{2,1} +G^\eps_{2,2} + G^\eps_{2,3},
\end{align*}
where we wrote 
$  \frac{\delta V^\phi_i}{\delta \phi_\ell}$ for $   \frac{\delta V^\phi_i}{\delta \phi_\ell}(\cdot; \phi''_\ell-\phi_\ell)$
to simplify the notation. 
By \eqref{eq:derivative_1st_holder},
$\lim_{\eps\downarrow 0 }\Big\|\frac{V^{\phi^\eps}_i(\cdot) -V^{\phi}_i(\cdot)}{\eps}-\frac{\delta V^\phi_i}{\delta \phi_\ell}(\cdot; \phi''_\ell-\phi_\ell)\Big\|_{1+\eta/2,2+\eta}=0$,
which along with  the linearity of  
 $(y,z)\mapsto L(t,x,y,z,a)$ and 
(H.\ref{assum:Holder_regularity})
shows   
$\lim_{\eps\downarrow 0 } \|G^\eps_{2,1}\|_0=0$.
For the term $G^\eps_{2,2}$, 
for all $\bar{x} \in [0,T]\times\sR^{n_x}$, 
   writing $\p_x V^{\phi}_i=(\p_x V^{\phi}_i)(\bar{x})$
and $\p_{xx} V^{\phi}_i=(\p_{xx} V^{\phi}_i)(\bar{x})$
and applying the fundamental theorem of calculus
yield
\begin{align*}
& \frac{1}{\eps}
  \left( 
 (\p_{a_k} H_i)\big(\bar{x},  \p_x V^{\phi}_i ,  \p_{xx} V^{\phi}_i, {\phi^\eps}(\bar{x}) \big) 
-(\p_{a_k} H_i)\big(\bar{x} , \p_x V^\phi_i, \p_{xx} V^\phi_i,\phi(\bar{x})\big) 
\right)  
   \\
 & \q
  -   (\p_{a_ka_\ell} H_i)\big(\bar{x}, \p_x V^{\phi}_i , \p_{xx} V^{\phi}_i,\phi(\bar{x})\big)^\top  (\phi''_\ell-\phi_\ell)(\bar{x}) 
  \\
  &=  \int_0^1 
 \left( (\p_{a_ka_\ell} H_i)\big(\bar{x},\p_x V^{\phi}_i,\p_{xx} V^{\phi}_i,\phi^{r\eps}(\bar{x})\big) -
  (\p_{a_ka_\ell} H_i)\big(\bar{x},\p_x V^{\phi}_i ,\p_{xx} V^{\phi}_i,\phi(\bar{x})\big)\right)^\top  (\phi''_\ell-\phi_\ell)(\bar{x}) \d r
  \\
&  \le C\eps^{\beta  } \|\phi''_\ell-\phi_\ell\|_0^2,
\end{align*}
where the last inequality  used 
  the $\beta $-H\"{o}lder continuity 
of $a\mapsto (\p_{a_ka_\ell} H_i)\big(\bar{x},\p_x V^{\phi}_i,\p_{xx} V^{\phi}_i,a\big)$.
This proves $\lim_{\eps\downarrow 0 } \|G^\eps_{2,2}\|_0=0$. 
The H\"{o}lder continuity of coefficients   gives   $\lim_{\eps\downarrow 0 } \|G^\eps_{2,3}\|_0=0$
and $\lim_{\eps\downarrow 0 } \|G^\eps_{3}\|_0=0$.
Applying the maximum principle    \cite[Theorem 2.5, p.~18]{ladyzhenskaia1988linear}
to \eqref{eq:PDE_W_eps}
yields that there exists $C\ge 0$ such that 
$\|W^\eps\|_0\le C\|G^\eps\|_0$ for all $\eps\in (0,1]$.
This  along with the convergence of $G^\eps$ implies 
$\lim_{\eps\downarrow 0 } \|W^\eps\|_0=0$
and proves \eqref{eq:derivative_2nd_sup}.
Consequently, for all $(t,x)\in [0,T]\times \sR^{n_x}$,
  $\frac{\d}{\d \eps} \frac{\delta V^{\phi^\eps}_i}{\delta \phi_k}(t,x; \phi'_k ) \big\vert_{\eps=0} =
  \frac{\delta^2 V^{\phi}_i}{\delta \phi_k\delta \phi_\ell}(t,x; \phi'_k,\phi''_\ell-\phi_\ell )$.
Interchanging the roles of  $k$ and $\ell $ in the above analysis  proves the desired statement.
\end{proof}

\begin{proof}[Proof of Theorem \ref{thm:differential_MPG_pde} Item \ref{item:pde_potential}]
By   Theorem \ref{thm:differential_MPG_pde} Items  \ref{item:pde_first} and \ref{item:pde_second},
it remains to verify    Theorem \ref{thm:symmetry_value_sufficient} 
 Conditions  \ref{item:integrable_p-Vi}
and  \ref{item:joint_continuity}
for fixed
 $i,j\in I_N$, 
 $ z, 
 \phi \in \pi^{(N)}$, 
  $\phi'_i,\tilde{\phi}'_i\in \pi_i$  
and $ {\phi}''_j \in \pi_j$.

For Condition  \ref{item:integrable_p-Vi}, 
for all $r, \eps\in [0,1]$,
let $\phi^{r,\eps}=
z+r( (\phi_i+\eps(\tilde{\phi}'_{i}-\phi_i),\phi_{-i})-z) $.
As $\sup_{r, \eps\in [0,1]}\|\phi^{r,\eps}\|_{\gamma/2,\gamma}<\infty$,
by applying  
 the Schauder  a-priori estimate  \cite[Theorem 5.1, p.~320]{ladyzhenskaia1988linear}
 to \eqref{eq:pde_value}, \eqref{eq:pde_value_1st}
 and \eqref{eq:pde_value_2nd},
 there exists $\eta\in (0,1]$ such that 
 $\sup_{r, \eps\in [0,1]}\Big\| \frac{\delta^2 V^{\phi^{r,\eps}}_i}{\delta \phi_i\delta \phi_j} (\cdot; \phi'_i,\phi''_j) \Big \|_{\gamma/2,\gamma}<\infty$.

 For Condition  \ref{item:joint_continuity}, 
 for all $\eps=(\eps_i)_{i=1}^N\in [0,1]^N$,
let  $\phi^\eps=  (z_i+\eps_i({\phi}_{i}-z_i))_{i\in I_N}$
and observe that 
$U^\eps= V^{\phi^{\eps}}-V^{\phi^{0}}$ satisfies  for all $(t,x)\in [0,T]\times \sR^{n_x}$, 
\bb
\left\{
 \begin{aligned}
&\p_t U^\eps (t,x) + \cL^{\phi^\eps} U^\eps (t,x)+( \cL^{\phi^\eps}- \cL^{z}) V^{\phi^{0}}(t,x) + f_i(t,x,\phi^\eps(t,x))
- f_i(t,x,{\phi^0}(t,x))=0,
\\
& U^\eps(T,x)= 0.
 \end{aligned} 
 \right.
 \ee
As  $\sup_{ \eps\in [0,1]^N}\|\phi^{\eps}\|_{\gamma/2,\gamma}<\infty$,
by (H.\ref{assum:Holder_regularity}),     the Schauder  a-priori estimate
and the fundamental theorem of calculus,
  there exists   $C\ge 0$ such that  for all $ \eps\in [0,1]^N$, 
  it holds with $\eta=\beta \gamma$ that
 \begin{align*}
  \|U^\eps\|_{1+\eta/2,2+ \eta}
  &\le C\|( \cL^{\phi^\eps}- \cL^{\phi^{0}}) V^{\phi^{0}}+ f_i(\cdot,\phi^\eps(\cdot))
- f_i(\cdot,{\phi^0}(\cdot))\|_{ \eta/2, \eta}
\\
&\le C\left\|\int_0^1
\sum_{j=1}^N 
(\p_{a_j} H_i)\big(\cdot, (\p_x V^{\phi^{0}}_i)(\cdot), (\p_{xx} V^{\phi^{0}}_i)(\cdot),\phi^{r\eps}(\cdot)\big)^\top \eps_j
(\phi_j-z_j)(\cdot)\d r \right\|_{ \eta/2, \eta},
\end{align*}
where  $\phi^{r\eps}=  (z_i+r\eps_i({\phi}_{i}-z_i))_{i\in I_N}$.
This   along with the H\"{o}lder continuity of $\p_{a} H_i$  implies $ \lim_{\eps\downarrow 0}\|V^{\phi^{\eps}}-V^{\phi^{0}}\|_{1+ \beta \gamma/2,2+ \beta \gamma}=0$. 
Applying similar arguments to  \eqref{eq:pde_value_1st}  yields   
$\lim_{\eps\downarrow 0} \frac{\delta V^{\phi^\eps}_i}{\delta \phi_i}(\cdot; \phi'_i)
= \frac{\delta V^{\phi^{0}}_i}{\delta \phi_i}(\cdot; \phi'_i)$
and 
$\lim_{\eps\downarrow 0}\frac{\delta V^{\phi^\eps}_i}{\delta \phi_j}(\cdot; \phi''_j) 
=  \frac{\delta V^{\phi^{0}}_i}{\delta \phi_j}(\cdot; \phi''_j)$
with respect to   the 
$  \|\cdot   \|_{1+ \beta \gamma/2,2+ \beta \gamma}$-norm. 
Finally,  by
 \eqref{eq:pde_value_2nd} and  
 the maximum principle, 
$\lim_{\eps\downarrow 0} \big\|\frac{\delta^2 V^{\phi^\eps}_i}{\delta \phi_i\delta \phi_j}(\cdot; \phi'_i,\phi''_j)- \frac{\delta^2 V^{\phi^0}_i}{\delta \phi_i\delta \phi_j}(\cdot; \phi'_i,\phi''_j)\big \|_0=0$.
This verifies Condition  \ref{item:joint_continuity}.
\end{proof}

\subsection{Proofs of  
Corollary \ref{cor:lq_mpg} 
and Proposition \ref{prop:lq_nondegenreate}}
\label{sec:proof_example}

  {

\begin{proof}[Proof of Corollary \ref{cor:lq_mpg}]
    For each  $\pi\in \pi^{(N)}$ with   feedback maps $K$,   define $F\coloneqq A+BK$. 
For each $i,j\in I_N$ with $i\not =j$,  $\phi'_i\in \pi_i$ and $\phi''_j\in \pi_j$,
define  $\Delta \Theta^{i} \coloneqq   \Theta^{i}_i-\Theta^{i}_j$
and   $\Delta \Theta^{j} \coloneqq   \Theta^{j}_i-\Theta^{j}_j$.
For any matrix $M\in \sR^{n_x\times n_x}$, 
define $M^S \coloneqq M+M^\top$. 

Fix $\pi\in \pi^{(N)}$.
Observe from  \eqref{eq:ode_lambda} that for all $\phi'_i\in \pi_i$ and $\phi''_j\in \pi_j$,
$M\coloneqq \Lambda^{i,j}_i-
  \Lambda^{j,i}_j$ satisfies the ODE: 
\begin{align*}
  \dot M &+ F^\top M+ M F 
  + \left(\Delta \Theta^{i}  B_j K''_j   \right)^S
+\left(\Delta \Theta^{j}  B_i K'_i \right)^S 
\nb \\
& 
 +\left((K''_j)^\top  (R_i-R_j)_{j i}  K'_i\right)^S
=0, \q t\in [0,T]; \q   M(T)=0, 
\end{align*}
where we used $\big((R_i)_{\ell h}\big)^\top = (R_i)_{h\ell  } $ for all $i, h,\ell\in I_N$, as  $R_i$ takes values in $\sS^{n_a}$. By Condition \eqref{eq:lq_symmetry_mpg}
and the continuity of coefficients, 
$\cG_{\rm LQ}$ is an MPG
 if and only if for all $i,j\in I_N$ with $i\not =j$,
$K_i \in C([0,T];  \sR^{k_i\times n_x})$ and 
$K_j \in C([0,T];  \sR^{k_j\times n_x})$, 
 \begin{equation}
 \label{eq:Lambda_test}
 \left(\Delta \Theta^{i}  B_j K''_j  \right)^S
+\left(\Delta \Theta^{j}  B_i K'_i
   \right)^S 
 +\left((K''_j)^\top  (R_i-R_j)_{j i}  K'_i\right)^S =0,
\end{equation}
Note that  by     \eqref{eq:ode_lambda},
   $\Delta \Theta^{i} (T)= \Delta \Theta^{j}(T)=0$, and 
 $\Delta \Theta^{i}(t) $ (resp.~$\Delta \Theta^{j} (t)$) 
 depends linearly   on the integral  of  $K_i'$ (resp.~$K''_j$)   for all $t\in [0,T]$. 
Hence for each $t\in [0,T]$,
choosing a sequence of   $K_i'$ (resp.~$K''_j$)
that converges to $0$ except at $t$  shows that \eqref{eq:Lambda_test} is equivalent to 
for all 
$K_i \in C([0,T];  \sR^{k_i\times n_x})$, 
$K_j \in C([0,T];  \sR^{k_j\times n_x})$, 
$M_i\in \sR^{k_i\times n_x}$, and $M_j\in \sR^{k_i\times n_x}$,
$ \left(\Delta \Theta^{i}  B_j M_j  \right)^S=0$, 
$\left(\Delta \Theta^{j}  B_i M_i
   \right)^S =0$,
and 
$ \left(M_j^\top  (R_i-R_j)_{j i}  M_i\right)^S =0$.
Since
 $\tr(M^S)=2\tr(M )$ for any   $M\in \sR^{n_x\times n_x}$,
and  $(M_1,M_2)\mapsto \tr(M^\top_1 M_2)$
is  the Frobenius inner product,
we see $\cG_{\rm LQ}$ is an MPG
 if and only if  
  for all $i,j\in I_N$ with $i\not =j$,
 $ (R_i)_{ji}=(R_j)_{j i}$,
 and 
 for all 
$K_i \in C([0,T];  \sR^{k_i\times n_x})$ and 
$K_j \in C([0,T];  \sR^{k_j\times n_x})$,  
 $
 \Delta \Theta^{i}  B_j =0$ and 
 $  \Delta \Theta^{j}  B_i =0 
 $. This finishes the proof.
\end{proof}

 \begin{proof}[Proof of Proposition \ref{prop:lq_nondegenreate}]

 Since for all $t\in [0,T]$, $B_i(t)$ has full row rank and hence has a right inverse, by Corollary \ref{cor:lq_mpg},
$\cG_{\rm LQ}$ is an MPG if and only if 
for all $i,j\in I_N$ with $i\not =j$, all 
$K\in C([0,T];  \sR^{n_a\times n_x})$,
and all $  
K_i \in C([0,T];  \sR^{k_i\times n_x})$,
\begin{equation}
\label{eq:lq_full_rank_theta}
 (R_i)_{ij}=(R_j)_{ij} 
 \quad \textnormal{and} \quad
   \Theta^{i}_i-
\Theta^{i}_j=0.
\end{equation}
 By \eqref{eq:ode_theta},
  $M\coloneqq   \Theta^{i}_i-
\Theta^{i}_j$ satisfies the ODE: for all $t\in [0,T]$,  
\begin{align*}
    \dot{M}  &+ F^\top   M+M F 
 +\left( (\Psi_i-\Psi_j) B_i K'_i\right)^S 
    +  \left(K^\top (R_i-R_j)_{ i}K'_i\right)^S 
=0; \q  M(T)=0,
\end{align*} 
where $A^S \coloneqq A+A^\top$
for any   $A\in \sR^{n_x\times n_x}$.
Since $\Psi_i-\Psi_j$ is independent of $K'_i$,
 it holds that \eqref{eq:lq_full_rank_theta} 
is equivalent to 
for all $i,j\in I_N$ with $i\not =j$, 
$(R_i)_{ij}=(R_j)_{ij}$ and for all 
$K\in C([0,T];  \sR^{n_a\times n_x})$,
$(\Psi_i-\Psi_j)B_i+ K^\top (R_i-R_j)_{ i}=0$.
Observe from   \eqref{eq:ode_psi} that
modifying $K$ at any time point will not change the value of $\Psi_i-\Psi_j$.
This, along with a limiting argument 
and the full row rank of $B_i$,
shows  that $\cG_{\rm LQ}$ is an MPG    if and only if $R_i =R_j$ and 
$\Psi_i=\Psi_j $ for all  $K\in C([0,T];  \sR^{n_a\times n_x})$,
which is equivalent to that for all $i,j\in I_N$, $R_i=R_j$, $Q_i=Q_j$ and $G_i=G_j$ by \eqref{eq:ode_psi}.
This completes the proof.
\end{proof}
}

\bibliographystyle{abbrv}
\bibliography{potential_game.bib}

\newpage
\appendix 

\section{Proofs of Theorems \ref{thm:differential_MPG_proba}
and \ref{thm:lq_characterisation}}
 \label{sec:techinical_lemma}
 
 \subsection{Proof of Theorem \ref{thm:differential_MPG_proba}}
\label{sec:proof_sde_probabilistic}

We start by presenting  Lemmas    
 \ref{lemma:1st_derivative_state},
 \ref{lemma:2nd_derivative_state}
 and \ref{lemma:control_derivative},
 which characterize 
 the derivatives of  $X^{t,x,\phi}$ 
 and $\alpha^{t,x,\phi}$  
 with respect to policies.
The proofs  
 are given in 
   Appendix 
  \ref{sec:differentiability_lemma}.

The following notation will be used in the subsequent analysis:
for each $t\in [0,T]$, $n\in \sR$
 and  $(X^n)_{n=0 }^\infty\in \cS^\infty([t,T];\sR^n)$,  
 we write  $\cS^\infty\text-\lim_{n\to\infty}X^n=X^0$
if   for all $q\ge 1$,   $\sE[\sup_{s\in [t,T]} |X^n_s-X^0_s|^q]<\infty$.
Let    $ X^\eps\in \cS^\infty([t,T];\sR^n)$  for all $ \eps\in [0,1)$, 
we write $Y=\cS^\infty \text- \p_\eps X^\eps\vert_{\eps=0}$ if  $\cS^\infty \text-\lim_{r\downarrow 0}\frac{1}{r}(X^{r}-X^{0})=Y$.  
 Higher order  derivatives of a process   can be defined similarly. 
 The 
continuity and differentiability for processes 
in $\cH^\infty([t,T];\sR^n)$ are defined similarly.

 \begin{Lemma}
 \label{lemma:1st_derivative_state}
  Assume (H.\ref{assum:regularity_N}). 
Let  
   $(t,x)\in [0,T]\times \sR^{n_x}$, $\phi\in \pi^{(N)}$ and $i\in I_N$.
   For all 
  $\phi'_i\in \spn(\pi_i)$,
  \eqref{eq:X_sensitivity_first} admits a unique solution $\frac{\delta X^{t,x}}{\delta \phi_i}(\phi;\phi'_i)\in \cS^\infty([t,T];\sR^{n_x})$
  satisfying  the following properties:
\begin{enumerate}[(1)]
\item
\label{item:linear_dX_dphi}
 The map $\frac{\delta X^{t,x}}{\delta \phi_i}(\phi;\cdot):\spn(\pi_i)\to   \cS^\infty([t,T];\sR^{n_x})$ is linear; 
\item
\label{item:Y=dX_dphi}
  For all $\phi'_i\in \pi_i$, 
$ \frac{\delta X^{t,x}}{\delta \phi_i}(\phi;\phi'_i-\phi_i)
=\cS^\infty \text-\p_\eps X^{t,x, (\phi_i+\eps (\phi_i'-\phi_i),\phi_{-i})}\big\vert_{\eps=0}$.
\end{enumerate}

 \end{Lemma}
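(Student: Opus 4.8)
The plan is to regard \eqref{eq:X_sensitivity_first} as an inhomogeneous linear SDE, deduce its well-posedness and linearity from standard theory, and then identify its solution with the directional derivative of $X^{t,x,\phi}$ through a difference-quotient argument. First one checks that under (H.\ref{assum:regularity_N}) the maps $x\mapsto B(t,x,\phi(t,x))$ and $x\mapsto \Sigma(t,x,\phi(t,x))$ are globally Lipschitz with Lipschitz constant bounded uniformly in $t$, because $\p_x B^\phi=(\p_x B)(\cdot,\phi)+(\p_a B)(\cdot,\phi)\p_x\phi$ is bounded (the first derivatives of $B$ are bounded and $\p_x\phi$ is bounded by \eqref{eq:policy_probabilistic}), and likewise for $\Sigma$; hence \eqref{eq:state_policy} has a unique solution $X^{t,x,\phi}\in\cS^\infty$. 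Since $\p_{a_i}B$, $\p_{a_i}\Sigma$ are bounded and $\phi'_i\in\spn(\pi_i)$ is of linear growth, the inhomogeneous terms $(\p_{a_i}B^\phi)[\phi'_i](\cdot,X^{t,x,\phi})$ and $(\p_{a_i}\Sigma^\phi)[\phi'_i](\cdot,X^{t,x,\phi})$ lie in $\cH^\infty$, so standard results for linear SDEs with such coefficients and data (e.g.\ \cite[Theorem 3.3.1]{zhang2017backward}) furnish a unique solution $\frac{\delta X^{t,x}}{\delta\phi_i}(\phi;\phi'_i)\in\cS^\infty([t,T];\sR^{n_x})$. Item~\ref{item:linear_dX_dphi} is then immediate from uniqueness, since both inhomogeneous terms are linear in $\phi'_i$ whereas the homogeneous part does not involve $\phi'_i$.

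For Item~\ref{item:Y=dX_dphi}, fix $\phi'_i\in\pi_i$, set $\delta_i=\phi'_i-\phi_i$, $\phi^\eps=(\phi_i+\eps\delta_i,\phi_{-i})$ for $\eps\in[0,1]$, $\Delta^\eps=X^{t,x,\phi^\eps}-X^{t,x,\phi}$, $Y^\eps=\Delta^\eps/\eps$, and $Y=\frac{\delta X^{t,x}}{\delta\phi_i}(\phi;\delta_i)$. Since $\phi^\eps$ is Lipschitz in $x$ and of linear growth uniformly in $\eps\in[0,1]$, the family $\{X^{t,x,\phi^\eps}\}_{\eps}$ is bounded in $\cS^\infty$ uniformly in $\eps$, and a Gr\"{o}nwall and Burkholder--Davis--Gundy estimate on $\Delta^\eps$ together with $\sup_{x}|\phi^\eps(s,x)-\phi(s,x)|/(1+|x|)=O(\eps)$ gives $\cS^\infty\text-\lim_{\eps\downarrow0}X^{t,x,\phi^\eps}=X^{t,x,\phi}$. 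Expanding the drift difference by the fundamental theorem of calculus,
\begin{align*}
\tfrac1\eps\big(B^{\phi^\eps}(s,X^{t,x,\phi^\eps}_s)-B^{\phi}(s,X^{t,x,\phi}_s)\big)
&=\Big(\int_0^1(\p_x B^{\phi^\eps})(s,X^{t,x,\phi}_s+r\Delta^\eps_s)\,\d r\Big)Y^\eps_s\\
&\quad+\Big(\int_0^1(\p_{a_i}B)\big(s,X^{t,x,\phi}_s,\phi^{r\eps}(s,X^{t,x,\phi}_s)\big)\,\d r\Big)\delta_i(s,X^{t,x,\phi}_s),
\end{align*}
where $\phi^{r\eps}=(\phi_i+r\eps\,\delta_i,\phi_{-i})$, and similarly for the diffusion term. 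Hence $Y^\eps$ solves an inhomogeneous linear SDE whose coefficients and data converge, as $\eps\downarrow0$, to those of \eqref{eq:X_sensitivity_first} with $\phi'_i$ replaced by $\delta_i$, using $\cS^\infty\text-\lim\Delta^\eps=0$, the continuity of the first derivatives of $B$ and $\Sigma$, and the uniform moment bounds to apply dominated convergence. Subtracting the equation for $Y$ from that for $Y^\eps$ and applying Gr\"{o}nwall's and the Burkholder--Davis--Gundy inequalities yields $\sE[\sup_{s\in[t,T]}|Y^\eps_s-Y_s|^q]\to0$ for every $q\ge1$, i.e.\ $Y=\cS^\infty\text-\p_\eps X^{t,x,(\phi_i+\eps(\phi'_i-\phi_i),\phi_{-i})}\big\vert_{\eps=0}$, which is Item~\ref{item:Y=dX_dphi}.

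The main obstacle will be the bookkeeping rather than any conceptual difficulty: one must establish $\cS^\infty$-bounds on $X^{t,x,\phi^\eps}$ and $Y^\eps$ that are \emph{uniform in $\eps$}, and justify interchanging the limit $\eps\downarrow0$ with the (stochastic) integrals. This rests on exploiting the boundedness of the first derivatives of $B$ and $\Sigma$ together with the linear growth of the possibly unbounded perturbation $\phi'_i$, so that every coefficient appearing in the expansions above is Lipschitz in $x$ with a constant independent of $\eps$ and the error terms can be controlled by Gr\"{o}nwall and Burkholder--Davis--Gundy uniformly in $\eps$.
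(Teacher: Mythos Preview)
Your proposal is correct and follows essentially the same route as the paper: well-posedness and linearity of \eqref{eq:X_sensitivity_first} from standard linear SDE theory, and identification of the solution with the directional derivative of the state process. The only difference is that the paper invokes Krylov's differentiability theorem \cite[Theorem~4 and Remark~5, p.~105--108]{krylov2008controlled} as a black box for Item~\ref{item:Y=dX_dphi}, whereas you reprove that result directly via the difference-quotient expansion and Gr\"onwall/BDG estimates; both arguments are standard and yours is simply more self-contained.
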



 \begin{Lemma}
 \label{lemma:2nd_derivative_state}
  Suppose (H.\ref{assum:regularity_N}) holds. 
Let  
   $(t,x)\in [0,T]\times \sR^{n_x}$, $\phi\in \pi^{(N)}$
   and
   $i,j\in I_N$.
      For all  $\phi'_i\in \spn(\pi_i)$
   and $\phi''_j\in \spn(\pi_j)$,
\eqref{eq:X_sensitivity_second}  admits a unique solution $\frac{\delta^2 X^{t,x}}{\delta \phi_i\delta \phi_j}(\phi;\phi'_i,\phi''_j)\in \cS^\infty([t,T];\sR^{n_x})$
satisfying    the following properties:
\begin{enumerate}[(1)]
\item
\label{item:bilinear_d2X_d2phi}
 The map $\frac{\delta^2 X^{t,x}}{\delta \phi_i\delta \phi_j}(\phi;\cdot,\cdot):\spn(\pi_i)\times \spn(\pi_j)\to   \cS^\infty([t,T];\sR^{n_x})$ is bilinear;
 \item 
 \label{item:symmetry_d2X_d2phi}
For all $\phi'_i\in \pi_i$ and $\phi''_j\in \pi_j$, 
$ \frac{\delta^2 X^{t,x}}{\delta \phi_i\delta \phi_j}(\phi;\phi'_i,\phi''_j)  
=\frac{\delta^2 X^{t,x}}{\delta \phi_j\delta \phi_i}(\phi;\phi''_j,\phi'_i)  $;
\item 
\label{item:Z=d^2X_d^2phi}
For all $\phi'_i\in \spn(\pi_i)$ and $\phi''_j\in \pi_j$, 
$ \frac{\delta^2 X^{t,x}}{\delta \phi_i\delta \phi_j}(\phi;\phi'_i,\phi''_j-\phi_j)
=\cS^\infty \text-\p_\eps 
\frac{\delta X^{t,x}}{\delta \phi_i}((\phi_j+\eps (\phi''_j-\phi_j),\phi_{-j});\phi'_i)
\big\vert_{\eps=0}.
$
\end{enumerate}

 \end{Lemma}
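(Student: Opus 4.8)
The plan is to follow the template of the proof of Lemma~\ref{lemma:1st_derivative_state}: first treat \eqref{eq:X_sensitivity_second} as a linear SDE in $Z$ with prescribed coefficients and establish well-posedness and bilinearity, then obtain symmetry from a uniqueness argument based on Clairaut's theorem, and finally identify $Z$ as the $\cS^\infty$-derivative in $\phi_j$ of the first-order sensitivity process. For existence, uniqueness and Item~\ref{item:bilinear_d2X_d2phi}: under (H.\ref{assum:regularity_N}) and \eqref{eq:policy_probabilistic} the homogeneous coefficients $(\p_x B^\phi)(s,X^{t,x,\phi}_s)$, $(\p_x\Sigma^\phi)(s,X^{t,x,\phi}_s)$ are uniformly bounded (e.g.\ $\p_x B^\phi=\p_x B+(\p_a B)\p_x\phi$, with $\p_{(x,a)}B$ and $\p_x\phi$ bounded), and $F_B,F_\Sigma$ in \eqref{eq:F_B} are finite sums of products of bounded quantities (first- and second-order derivatives of $B,\Sigma$ along $X^{t,x,\phi}$; $\p_x\phi,\p_x\phi'_i,\p_x\phi''_j$) with processes in $\cS^\infty$ (the linear-growth compositions $\phi'_i(\cdot,X^{t,x,\phi})$, $\phi''_j(\cdot,X^{t,x,\phi})$ and $y_1\coloneqq\frac{\delta X^{t,x}}{\delta\phi_i}(\phi;\phi'_i)$, $y_2\coloneqq\frac{\delta X^{t,x}}{\delta\phi_j}(\phi;\phi''_j)$ from Lemma~\ref{lemma:1st_derivative_state}); by H\"older's inequality $F_B,F_\Sigma\in\cS^\infty([t,T];\sR^{n_x})$, and the a priori estimates for linear SDEs (\cite[Theorem~3.4.3]{zhang2017backward}) yield a unique $Z\in\cS^\infty([t,T];\sR^{n_x})$. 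Since $\phi'_i\mapsto\frac{\delta X^{t,x}}{\delta\phi_i}(\phi;\phi'_i)$ is linear (Lemma~\ref{lemma:1st_derivative_state}\ref{item:linear_dX_dphi}) and $(y_1,\phi'_i,y_2,\phi''_j)\mapsto F_B(\cdot,y_1,y_2,\phi'_i,\phi''_j)$, $F_\Sigma(\cdot,y_1,y_2,\phi'_i,\phi''_j)$ are, by inspection of \eqref{eq:F_B}, bilinear in the pair $\big((y_1,\phi'_i),(y_2,\phi''_j)\big)$, the inhomogeneity is bilinear in $(\phi'_i,\phi''_j)$ and uniqueness transfers bilinearity to $Z$.

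For symmetry (Item~\ref{item:symmetry_d2X_d2phi}), let $\widetilde Z=\frac{\delta^2 X^{t,x}}{\delta\phi_j\delta\phi_i}(\phi;\phi''_j,\phi'_i)$ solve \eqref{eq:X_sensitivity_second} with the labels $i,j$ and the directions $\phi'_i,\phi''_j$ interchanged. Its homogeneous part equals that of the equation for $Z$, so it suffices to check that $F_B(\cdot,y_1,y_2,\phi'_i,\phi''_j)$, evaluated at $y_1=\frac{\delta X^{t,x}}{\delta\phi_i}(\phi;\phi'_i)$ and $y_2=\frac{\delta X^{t,x}}{\delta\phi_j}(\phi;\phi''_j)$, is left unchanged when one simultaneously swaps $y_1\leftrightarrow y_2$, $\phi'_i\leftrightarrow\phi''_j$ and $i\leftrightarrow j$, and likewise for $F_\Sigma$. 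This is a termwise check on \eqref{eq:F_B}: $y_2^\top(\p_{xx}B^\phi_\ell)y_1$ is symmetric in $(y_1,y_2)$ since $\p_{xx}B^\phi_\ell$ is a Hessian; $(\phi''_j)^\top(\p_{a_ia_j}B_\ell)\phi'_i=(\phi'_i)^\top(\p_{a_ja_i}B_\ell)\phi''_j$ since $\p_{a_ia_j}B_\ell=(\p_{a_ja_i}B_\ell)^\top$; and the remaining mixed terms pair up the same way via $\p_{xa_j}B_\ell=(\p_{a_jx}B_\ell)^\top$ and $\p_{aa_j}B_\ell=(\p_{a_ja}B_\ell)^\top$ — all instances of Clairaut's theorem applied componentwise, licit because $(x,a)\mapsto(B,\Sigma)(t,x,a)$ is $C^2$. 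Hence $Z$ and $\widetilde Z$ solve the same linear SDE and coincide by uniqueness.

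For Item~\ref{item:Z=d^2X_d^2phi}, fix $\phi'_i\in\spn(\pi_i)$, $\phi''_j\in\pi_j$, and for $\eps\in[0,1]$ set $\phi^\eps\coloneqq(\phi_j+\eps(\phi''_j-\phi_j),\phi_{-j})$, which lies in $\pi^{(N)}$ because $A_j$ is convex and the bounds in \eqref{eq:policy_probabilistic} survive convex combinations. Let $Y^\eps\coloneqq\frac{\delta X^{t,x}}{\delta\phi_i}(\phi^\eps;\phi'_i)\in\cS^\infty$ solve \eqref{eq:X_sensitivity_first} with $\phi$ replaced by $\phi^\eps$; the goal is $\cS^\infty\text{-}\p_\eps Y^\eps\vert_{\eps=0}=Z$, where $Z=\frac{\delta^2 X^{t,x}}{\delta\phi_i\delta\phi_j}(\phi;\phi'_i,\phi''_j-\phi_j)$ (Step~1 with $\phi''_j$ replaced by $\phi''_j-\phi_j$). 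I would view the $Y^\eps$-equation as a linear SDE parametrized by $\eps$, whose coefficients (both the homogeneous ones $(\p_x B^{\phi^\eps})(s,X^{t,x,\phi^\eps}_s)$, $(\p_x\Sigma^{\phi^\eps})(\cdot)$ and the inhomogeneous ones $(\p_{a_i}B^{\phi^\eps})[\phi'_i](\cdot)$, $(\p_{a_i}\Sigma^{\phi^\eps})[\phi'_i](\cdot)$) are compositions of the $C^2$ maps $\p_x B,\p_x\Sigma,\p_{a_i}B,\p_{a_i}\Sigma$ with $X^{t,x,\phi^\eps}$ and with $\phi^\eps$ (affine in $\eps$). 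By Lemma~\ref{lemma:1st_derivative_state}\ref{item:Y=dX_dphi}, $\cS^\infty\text{-}\p_\eps X^{t,x,\phi^\eps}\vert_{\eps=0}=\frac{\delta X^{t,x}}{\delta\phi_j}(\phi;\phi''_j-\phi_j)\eqqcolon\Xi$ and $X^{t,x,\phi^\eps}\to X^{t,x,\phi}$ in $\cS^\infty$ as $\eps\downarrow 0$; together with the boundedness of the first and second derivatives of $B,\Sigma$ and the uniform moment bounds on the processes involved, the chain rule gives the $\cS^\infty$/$\cH^\infty$-differentiability of these coefficients at $\eps=0$, and a lengthy but routine bookkeeping shows the formally differentiated SDE is exactly \eqref{eq:X_sensitivity_second} with $y_1=\frac{\delta X^{t,x}}{\delta\phi_i}(\phi;\phi'_i)$, $y_2=\Xi$, $\phi'_i=\phi'_i$, $\phi''_j=\phi''_j-\phi_j$ — the $(\p_{xx}B)$, $(\p_{a_ix}B)$, $(\p_{a_i}B)\p_x(\cdot)$ contributions coming from differentiating through $X^{t,x,\phi^\eps}$, and the $(\p_{a_ia_j}B)$, $(\p_{xa_j}B)$, $(\p_{aa_j}B)$, $(\p_{a_j}B)\p_x\phi''_j$ contributions from differentiating through $\phi^\eps$ in its action slot and through $\p_x\phi^\eps$. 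Existence of $\cS^\infty\text{-}\p_\eps Y^\eps\vert_{\eps=0}$ and the identification of its equation then follow from the same differentiability-of-SDE-solutions results used for Lemma~\ref{lemma:1st_derivative_state} (\cite[Theorem~4, p.~105]{krylov2008controlled}, \cite[Remark~5, p.~108]{krylov2008controlled}) applied to this $\eps$-family, or directly via BDG and Gr\"onwall, using that a linear SDE depends $\cS^\infty$-Lipschitz on its coefficients and inhomogeneity and that the corresponding difference quotients converge; uniqueness from Step~1 gives $\cS^\infty\text{-}\p_\eps Y^\eps\vert_{\eps=0}=Z$.

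I expect this last step to be the main obstacle. The conceptual content is just chain rule plus a parameter-differentiation theorem for SDEs, but matching the formally differentiated equation to every cross term of \eqref{eq:F_B} is delicate, and one must carefully justify the $\cS^\infty$-differentiability of each composition (needing the boundedness of the second derivatives of $B,\Sigma$ together with uniform moment bounds on $X^{t,x,\phi}$, $\frac{\delta X^{t,x}}{\delta\phi_i}(\phi;\phi'_i)$ and $\frac{\delta X^{t,x}}{\delta\phi_j}(\phi;\phi''_j-\phi_j)$) as well as the uniformity in $\eps$ required to pass to the limit in the difference quotient.
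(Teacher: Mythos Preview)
Your proposal is correct and follows essentially the same approach as the paper's proof: linear SDE well-posedness for existence/uniqueness, bilinearity of $F_B,F_\Sigma$ combined with Lemma~\ref{lemma:1st_derivative_state}\ref{item:linear_dX_dphi} and uniqueness for Item~\ref{item:bilinear_d2X_d2phi}, symmetry of second partials (Clairaut) plus uniqueness for Item~\ref{item:symmetry_d2X_d2phi}, and Krylov's parameter-differentiation theorem applied to the $\eps$-family $\frac{\delta X^{t,x}}{\delta\phi_i}(\phi^\eps;\phi'_i)$ for Item~\ref{item:Z=d^2X_d^2phi}. The paper's version is simply more terse---it asserts the symmetry $F_B(t,x,y_1,y_2,\phi'_i,\phi''_j)=F_B(t,x,y_2,y_1,\phi''_j,\phi'_i)$ without the termwise check you spell out, and for Item~\ref{item:Z=d^2X_d^2phi} it invokes \cite[Theorem~4, p.~105]{krylov2008controlled} directly without detailing the bookkeeping you flag as the main obstacle; your concerns there are legitimate but the paper does not elaborate on them either.
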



\begin{Lemma}
\label{lemma:control_derivative}
 Suppose (H.\ref{assum:regularity_N}) holds. 
Let   $(t,x)\in [0,T]\times\sR^{n_x}$ and  $\phi\in \pi^{(N)}$.
For each 
$s\in [t,T]$, let   $\alpha^{t,x,\phi}_s=\phi(s,X^{t,x,\phi}_s)$.
Then for all $i,j\in I_N$, 
\begin{enumerate}[(1)]
\item 
\label{item:control_1st}

$
 \frac{\delta \alpha^{t,x}}{\delta \phi_i}(\phi;\cdot):\spn(\pi_i)\to \cH^\infty([t,T];\sR^{n_a})$ is linear,
 and 
$ \frac{\delta \alpha^{t,x}}{\delta \phi_i}(\phi;\phi'_i-\phi_i)
=\cH^\infty \text-\p_\eps \alpha^{t,x, (\phi_i+\eps (\phi_i'-\phi_i),\phi_{-i})}\big\vert_{\eps=0}$
for all $\phi'_i\in \pi_i$;

\item 
\label{item:control_2nd}
$
 \frac{\delta^2 \alpha^{t,x}}{\delta \phi_i\delta\phi_j}(\phi;\cdot,\cdot):\spn(\pi_i)\times\spn(\pi_j) \to \cH^\infty([t,T];\sR^{n_a})
 $ is bilinear, 
  for all $\phi'_i\in \pi_i$ and $\phi''_j\in \pi_j$, 
$ \frac{\delta^2 \alpha^{t,x}}{\delta \phi_i\delta \phi_j}(\phi;\phi'_i,\phi''_j)  
=\frac{\delta^2 \alpha^{t,x}}{\delta \phi_j\delta \phi_i}(\phi;\phi''_j,\phi'_i)  $
and 
for all $\phi'_i\in \spn(\pi_i)$ and $\phi''_j\in \pi_j$,
$ \frac{\delta^2 X^{t,x}}{\delta \phi_i\delta \phi_j}(\phi;\phi'_i,\phi''_j-\phi_j)
=\cH^\infty \text-\p_\eps 
\frac{\delta \alpha^{t,x}}{\delta \phi_i}((\phi_j+\eps (\phi''_j-\phi_j),\phi_{-j});\phi'_i)
\big\vert_{\eps=0} 
$.

\end{enumerate}

\end{Lemma}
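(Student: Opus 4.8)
The plan is to treat the whole lemma as a direct differentiation exercise: the control process is the explicit composition $\alpha^{t,x,\phi}_s=\phi(s,X^{t,x,\phi}_s)$, so all of its policy sensitivities come from the chain and product rules, fed by the state sensitivities $\frac{\delta X^{t,x}}{\delta\phi_i}$ and $\frac{\delta^2 X^{t,x}}{\delta\phi_i\delta\phi_j}$ already controlled in $\cS^\infty$ by Lemmas \ref{lemma:1st_derivative_state} and \ref{lemma:2nd_derivative_state}. For each item I will (i) check that the right-hand side of \eqref{eq:control_1st}, resp.\ \eqref{eq:control_2nd}, lies in $\cH^\infty$; (ii) read off (bi)linearity and symmetry from those closed-form expressions; and (iii) identify the expression as the asserted $\cH^\infty$-derivative of a difference quotient, via a mean-value expansion in the state variable together with dominated convergence.

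For item \ref{item:control_1st}: membership in $\cH^\infty$ is immediate, since $\p_x\phi$ is bounded (by \eqref{eq:policy_probabilistic}), $\frac{\delta X^{t,x}}{\delta\phi_i}(\phi;\phi'_i)\in\cS^\infty$, and $\phi'_i\in\spn(\pi_i)$ has at most linear growth so $\phi'_i(\cdot,X^{t,x,\phi}_\cdot)$ has moments of all orders because $X^{t,x,\phi}\in\cS^\infty$. Linearity of $\phi'_i\mapsto\frac{\delta\alpha^{t,x}}{\delta\phi_i}(\phi;\phi'_i)$ follows from item \ref{item:linear_dX_dphi} of Lemma \ref{lemma:1st_derivative_state} and the linearity of $\phi'_i\mapsto E_i\phi'_i(\cdot,X^{t,x,\phi}_\cdot)$. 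For the derivative identity I set $\phi^\eps=(\phi_i+\eps(\phi'_i-\phi_i),\phi_{-i})$ and split $\alpha^{t,x,\phi^\eps}_s-\alpha^{t,x,\phi}_s=\big[\phi^\eps(s,X^{t,x,\phi^\eps}_s)-\phi^\eps(s,X^{t,x,\phi}_s)\big]+\eps\, E_i(\phi'_i-\phi_i)(s,X^{t,x,\phi}_s)$, the last term because $\phi^\eps-\phi$ is supported on the $i$-th block. Expanding the first bracket by the fundamental theorem of calculus in $x$ gives $\big(\int_0^1(\p_x\phi^\eps)(s,X^{t,x,\phi}_s+\theta(X^{t,x,\phi^\eps}_s-X^{t,x,\phi}_s))\,\d\theta\big)(X^{t,x,\phi^\eps}_s-X^{t,x,\phi}_s)$; dividing by $\eps$ and letting $\eps\downarrow0$ I will use $\frac1\eps(X^{t,x,\phi^\eps}-X^{t,x,\phi})\to\frac{\delta X^{t,x}}{\delta\phi_i}(\phi;\phi'_i-\phi_i)$ in $\cS^\infty$ (item \ref{item:Y=dX_dphi} of Lemma \ref{lemma:1st_derivative_state}), the uniform bound $\sup_{\eps}\|\p_x\phi^\eps\|_0<\infty$ together with $\p_x\phi^\eps\to\p_x\phi$ and the bound on $\p_{xx}\phi$, and Hölder's inequality plus dominated convergence to pass to the $\cH^\infty$-limit, recovering exactly \eqref{eq:control_1st} with $\phi'_i$ replaced by $\phi'_i-\phi_i$.

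For item \ref{item:control_2nd}: by Definition \ref{def:linear_deri}, $\frac{\delta^2\alpha^{t,x}}{\delta\phi_i\delta\phi_j}(\phi;\phi'_i,\cdot)$ must be the derivative in $\phi_j$ of $\frac{\delta\alpha^{t,x}}{\delta\phi_i}(\cdot;\phi'_i)$. Starting from \eqref{eq:control_1st} evaluated along $\phi^\eps=(\phi_j+\eps(\phi''_j-\phi_j),\phi_{-j})$ and differentiating in $\eps$ at $0$ by the product rule, the three factors contribute precisely the four terms of \eqref{eq:control_2nd} (with $\phi''_j$ in the role of $\phi''_j-\phi_j$): from $(\p_x\phi^\eps)(s,X^{t,x,\phi^\eps}_s)$ the Hessian term $(\frac{\delta X^{t,x}}{\delta\phi_j}(\phi;\phi''_j-\phi_j))^\top(\p_{xx}\phi)\frac{\delta X^{t,x}}{\delta\phi_i}(\phi;\phi'_i)$ and $E_j(\p_x(\phi''_j-\phi_j))\frac{\delta X^{t,x}}{\delta\phi_i}(\phi;\phi'_i)$, using $\p_\eps X^{t,x,\phi^\eps}|_0=\frac{\delta X^{t,x}}{\delta\phi_j}(\phi;\phi''_j-\phi_j)$; from $\frac{\delta X^{t,x}}{\delta\phi_i}(\phi^\eps;\phi'_i)$ the term $(\p_x\phi)\frac{\delta^2 X^{t,x}}{\delta\phi_i\delta\phi_j}(\phi;\phi'_i,\phi''_j-\phi_j)$, using item \ref{item:Z=d^2X_d^2phi} of Lemma \ref{lemma:2nd_derivative_state}; and from $E_i\phi'_i(s,X^{t,x,\phi^\eps}_s)$ the term $E_i(\p_x\phi'_i)\frac{\delta X^{t,x}}{\delta\phi_j}(\phi;\phi''_j-\phi_j)$. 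The $\cH^\infty$ bounds and passage to the limit use the same Hölder-plus-dominated-convergence scheme, now also invoking the $\cS^\infty$-convergence of the first-order sensitivities along $\phi^\eps$. Bilinearity of $\frac{\delta^2\alpha^{t,x}}{\delta\phi_i\delta\phi_j}(\phi;\cdot,\cdot)$ follows term by term from item \ref{item:bilinear_d2X_d2phi} of Lemma \ref{lemma:2nd_derivative_state} and the linearity of the first-order pieces. Symmetry follows by inspecting \eqref{eq:control_2nd}: the Hessian term is invariant because $\p_{xx}\phi$ is a componentwise symmetric bilinear form in its two $x$-slots, the $\p_x\phi\,\frac{\delta^2 X^{t,x}}{\delta\phi_i\delta\phi_j}$ term is symmetric by item \ref{item:symmetry_d2X_d2phi} of Lemma \ref{lemma:2nd_derivative_state}, and the two $E$-terms interchange under $i\leftrightarrow j$.

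The main obstacle is not the algebra but upgrading every limit from convergence in probability to convergence in $\cH^\infty$, that is in $L^q$ for all $q$: one needs $q$-integrable dominating functions for the difference quotients, uniformly in $\eps$. These are supplied by the a priori moment bounds behind Lemmas \ref{lemma:1st_derivative_state}–\ref{lemma:2nd_derivative_state}, namely $\sup_{\eps}\sE[\sup_s|X^{t,x,\phi^\eps}_s|^q]<\infty$ and $\sup_\eps\sE[\sup_s|\frac1\eps(X^{t,x,\phi^\eps}_s-X^{t,x,\phi}_s)|^q]<\infty$ (and their analogues for the first-order sensitivities), combined with the global boundedness of $\p_x\phi$ and $\p_{xx}\phi$; Hölder's inequality then lets one absorb the only polynomially-bounded factors $\phi'_i(\cdot,X^{t,x,\phi}_\cdot)$ and $\p_x\phi'_i(\cdot,X^{t,x,\phi}_\cdot)$ against these moments. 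Everything else is bookkeeping with the chain rule.
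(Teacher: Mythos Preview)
Your proposal is correct and follows the same strategy as the paper: both recognize that $\alpha^{t,x,\phi}=\phi(\cdot,X^{t,x,\phi})$ and differentiate this composition via the chain rule, feeding in the state sensitivities from Lemmas \ref{lemma:1st_derivative_state} and \ref{lemma:2nd_derivative_state}, and both read off (bi)linearity and symmetry directly from the closed-form expressions \eqref{eq:control_1st}--\eqref{eq:control_2nd}. The only difference is presentational: the paper compresses your explicit mean-value expansion plus H\"older/dominated-convergence argument into a single citation of \cite[Theorem~9, p.~97]{krylov2008controlled}, which is exactly a general statement about differentiating smooth functionals of $\cS^\infty$-differentiable processes, so your version is simply an unpacking of that black box.
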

  

\begin{proof}[Proofs of Theorem   \ref{thm:differential_MPG_proba}
Items \ref{item:prob_first} and \ref{item:prob_second}]
Fix $\phi\in \pi^{(N)}$. 
The linearity of $\frac{\delta V^{t,x}_i}{\delta \phi_j}(\phi;\cdot):   \spn(\pi_j) \to \sR$
follows from
the linearity of 
$ \frac{\delta X^{t,x}}{\delta \phi_j} (\phi;\cdot)$
and 
$ \frac{\delta \alpha^{t,x}}{\delta \phi_j} (\phi;\cdot)$
  shown in 
 Lemmas     \ref{lemma:1st_derivative_state} and \ref{lemma:control_derivative}. 
Let $\phi'_j\in \pi_j$ and for each $\eps\in [0,1)$, 
 let $\phi^\eps=(\phi_j+\eps(\phi'_j-\phi_j), \phi_{-j})$.
 Recall that 
 \begin{align*}
  V^{t,x}_i(\phi^\eps) = \sE \left[\int_t^T f_i(s,X^{t,x,\phi^\eps}_s, \alpha^{t,x,\phi^\eps}_s)\d s + g_i(X^{t,x,\phi^\eps}_T)\right].
 \end{align*}
 Then   \cite[Theorem 9, p.~97]{krylov2008controlled},  the $\cS^\infty$-differentiability of $X^{t,x,\phi^\eps}$ and the $\cH^\infty$-differentiability of $\alpha^{t,x,\phi^\eps}$
 (see  Lemmas     \ref{lemma:1st_derivative_state},   \ref{lemma:2nd_derivative_state} and \ref{lemma:control_derivative})
 imply that 
 $\frac{\d }{\d \eps}V^{t,x}_i(\phi^\eps)\vert_{\eps=0}=  \frac{\delta V^{t,x}_i}{\delta \phi_j} (\phi;\phi'_j-\phi_j)
$,
hence  Theorem   \ref{thm:differential_MPG_proba}
 Item \ref{item:prob_first}.

To prove Theorem   \ref{thm:differential_MPG_proba}
 Item \ref{item:prob_second}, 
for all $(k,\ell)\in \{(i,j),(j,i)\}$,
observe that 
the bilinearity of $\frac{\delta^2 V^{t,x}_i}{\delta \phi_k\delta \phi_\ell}:\pi^{(N)} \times  \spn(\pi_k)\times \spn(\pi_\ell) \to \sR$ follows from \eqref{eq:value_derivative_2nd}, the linearity of 
$ \frac{\delta X^{t,x}}{\delta \phi_k} (\phi;\cdot)$
and 
$ \frac{\delta \alpha^{t,x}}{\delta \phi_\ell} (\phi;\cdot)$,
and the bilinearity of 
$   \frac{\delta^2 X^{t,x}}{\delta \phi_k\phi_\ell} (\phi;\cdot,\cdot)$
and 
$ \frac{\delta^2 \alpha^{t,x}}{\delta \phi_k\phi_\ell} (\phi;\cdot,\cdot)$. 
The facts that $\frac{\delta^2 V^{t,x}_i}{\delta \phi_i\delta \phi_j}$ is a linear derivative of $\frac{\delta V^{t,x}_i}{\delta \phi_i}$ in $\pi_j$ 
and that 
$\frac{\delta^2 V^{t,x}_i}{\delta \phi_j\delta \phi_i}$ is a linear derivative of $\frac{\delta V^{t,x}_i}{\delta \phi_j}$ in $\pi_i$ 
can be proved by similar arguments as those employed for the first-order derivatives. 
This finishes the proof. 
\end{proof}

\begin{proof}[Proof of Theorem   \ref{thm:differential_MPG_proba}
Item \ref{item:prob_potential}]

We only prove   the characterization of CLPGs,
which   follows by applying   Theorem \ref{thm:symmetry_value_sufficient}
to the   game $\cG_{\rm prob}$ with fixed $(t,x)\in [0,T]\times \sR^{n_x}$. 
The convexity of $\pi_i$ follows from the convexity of $A_i$ in  (H.\ref{assum:regularity_N}),
the linear differentiability of $(V_i)_{i\in I_N}$ has been proved in
Theorem   \ref{thm:differential_MPG_proba}
Items \ref{item:prob_first} and \ref{item:prob_second}, 
 and Condition  \ref{item:symmetry-Vi}  in   Theorem \ref{thm:symmetry_value_sufficient}
 has been imposed in Condition \eqref{eq:prob_symmetry}. 
 Hence it remains to 
verify  Conditions \ref{item:integrable_p-Vi} and \ref{item:joint_continuity}  in   Theorem \ref{thm:symmetry_value_sufficient}
for given  
 $i,j\in I_N$, 
 $ z, 
 \phi \in \pi^{(N)}$, 
  $\phi'_i,\tilde{\phi}'_i\in \pi_i$  
and $ {\phi}''_j \in \pi_j$.

To verify Condition \ref{item:integrable_p-Vi},
for all $r, \eps\in [0,1]$,
let $\phi^{r,\eps}=
z+r( (\phi_i+\eps(\tilde{\phi}'_{i}-\phi_i),\phi_{-i})-z) $.
To prove $ \sup_{r,\eps\in [0,1]}\left|  \frac{\delta^2 V_i^{t,x}}{\delta\phi_i\delta\phi_j}\big(\phi^{r,\eps}; {\phi}'_i,{\phi}''_j\big)  \right|<\infty$, 
by \eqref{eq:value_derivative_2nd} and 
 the polynomial growth of   $f_i$ and $g_i$ and their derivatives (see (H.\ref{assum:regularity_N})),  it suffices to 
show for all $q\ge 1$, there exists $C_q$ such that for all $r,\eps\in [0,1]$, 
\bb
\label{eq:uniform_moment_bound_X}
 \|X^{t,x,\phi^{r,\eps}}\|_{\cS^q},
  \left\| \frac{\delta X^{t,x}}{\delta \phi_i} (\phi^{r,\eps};\phi'_i)\right\|_{\cS^q},
 \left\| \frac{\delta X^{t,x}}{\delta \phi_j} (\phi^{r,\eps};\phi''_j)\right\|_{\cS^q},
  \left\|   \frac{\delta^2 X^{t,x}}{\delta \phi_i\phi_j} (\phi^{r,\eps};\phi'_i,\phi''_j)\right\|_{\cS^q}
\le C_q,
\ee
and 
\bb
\label{eq:uniform_moment_bound_alpha}
 \|\alpha^{t,x,\phi^{r,\eps}}\|_{\cH^q},
  \left\| \frac{\delta \alpha^{t,x}}{\delta \phi_i} (\phi^{r,\eps};\phi'_i)\right\|_{\cH^q},
 \left\| \frac{\delta \alpha^{t,x}}{\delta \phi_j} (\phi^{r,\eps};\phi''_j)\right\|_{\cH^q},
  \left\|   \frac{\delta^2 \alpha^{t,x}}{\delta \phi_i\phi_j} (\phi^{r,\eps};\phi'_i,\phi''_j)\right\|_{\cH^q}
\le C_q.
\ee
Since $\phi^{r,\eps}$ is a convex combination of 
policies $\phi$, $z$ and $(\tilde{\phi}'_i,\phi_{-i})$,
there exists $L\ge 0$ such that for all $r,\eps\in [0,1]$, 
$$
\sup_{(t,x)\in [0,T]\times \sR^{n_x}}({|\phi^{r,\eps}(t,0)|}+|(\p_x \phi^{r,\eps})(t,x)|+|(\p_{xx} \phi^{r,\eps})(t,x)|)\le L.
$$ 
Hence,
the uniform moment estimate \eqref{eq:uniform_moment_bound_X}
follows by applying standard moment estimates of SDEs (see \cite[Theorem 3.4.3]{zhang2017backward})
to \eqref{eq:state_policy}, 
\eqref{eq:X_sensitivity_first} 
and \eqref{eq:X_sensitivity_second}.
The estimate 
\eqref{eq:uniform_moment_bound_alpha}
then follows from  
\eqref{eq:control_1st}, \eqref{eq:control_2nd}
and the Cauchy-Schwarz inequality.
This verifies Condition \ref{item:integrable_p-Vi}.

To verify Condition \ref{item:joint_continuity},
for all $\eps=(\eps_i)_{i=1}^N\in [0,1]^N$,
let  $\phi^\eps=  (z_i+\eps_i({\phi}_{i}-z_i))_{i\in I_N}$.
Observe that  $\phi^\eps_i$ is a convex combination of $\phi_i$ and $z_i$, 
there exists $L\ge 0$ such that for all $\eps\in [0,1]^N$,
$$
\sup_{(t,x)\in [0,T]\times \sR^{n_x}}({|\phi^{\eps}(t,0)|}+|(\p_x \phi^{\eps})(t,x)|+|(\p_{xx} \phi^{\eps})(t,x)|)\le L,
$$ 
from which one can deduce  uniform moment estimates 
of the state and control processes 
as in \eqref{eq:uniform_moment_bound_X}
and \eqref{eq:uniform_moment_bound_alpha}.
As $\lim_{\eps_i\downarrow 0}\phi^\eps_i =\phi^0_i $ a.e., 
by   stability results of SDEs (e.g., \cite[Theorem 3.4.2]{zhang2017backward}),
\begin{align*}
&\lim_{\eps\to 0} \left(X^{t,x,\phi^{\eps}} ,  \frac{\delta X^{t,x}}{\delta \phi_i} (\phi^{\eps};\phi'_i),  \frac{\delta X^{t,x}}{\delta \phi_j} (\phi^{\eps};\phi''_j), \frac{\delta^2 X^{t,x}}{\delta \phi_i\phi_j} (\phi^{\eps};\phi'_i,\phi''_j)\right)
 \\
&\q = \left( X^{t,x,\phi^{0}} ,  \frac{\delta X^{t,x}}{\delta \phi_i} (\phi^{0};\phi'_i),  \frac{\delta X^{t,x}}{\delta \phi_j} (\phi^{0};\phi''_j), \frac{\delta^2 X^{t,x}}{\delta \phi_i\phi_j} (\phi^{0};\phi'_i,\phi''_j)
\right) 
\quad \textnormal{a.e.~$[t,T]\times \Omega$.}
\end{align*}
By \eqref{eq:control_1st}, \eqref{eq:control_2nd} and the convergence of $\phi^\eps$,
similar convergence results also hold for the control processes. 
Hence by     Lebesgue's dominated convergence theorem,  
$\lim_{\eps\to 0} \frac{\delta^2 V_i^{t,x }}{\delta \phi_i\phi_j}(\phi^\eps ;\phi'_i, \phi''_j) 
= \frac{\delta^2 V_i^{t,x }}{\delta \phi_i\phi_j}(\phi^0 ;\phi'_i, \phi''_j)$. This verifies Condition \ref{item:joint_continuity}.
\end{proof}  

\subsection{Proof of Theorem \ref{thm:lq_characterisation} }
\label{sec:proof_lq_equivlance}

\begin{proof}[Proof of Theorem \ref{thm:lq_characterisation}]
Fix   $i,h,\ell \in I_N$, 
$\phi\in \pi^{(N)}$, $\phi'_h\in \pi_h$, $\phi''_\ell\in \pi_\ell$
and $(t,x)\in [0,T]\times\sR^{n_x}$.
Denote by $X$ the state process $X^{t,x,\phi}$. 
We first prove the equivalence of   first-order linear derivatives. 
Applying It\^{o}'s formula to 
$s\mapsto \frac{1}{2} X_s ^\top \Theta^h_i(s)  X_s+ \theta^h_i(s)$ gives that 
\begin{align*}
&\left(\frac{1}{2} X_T ^\top \Theta^h_i(T) X_T+ \theta^h_i(T)\right) -\left(\frac{1}{2}x^\top \Theta^h_i(t)  x+ \theta^h_i(t)\right)
\\
&\q
=  \sE\left[\int_t^T\left( \frac{1}{2} X_s^\top \dot\Theta^h_i   X_s + \dot\theta^h
+ \frac{1}{2} \la (\Theta^h_i+ (\Theta^h_i)^\top) X_s, (A+BK)X_s\ra + \frac{1}{2}\tr\left(\sigma\sigma^\top \Theta^{h}_i  \right) \right)\d s
 \right],
\end{align*}
which along with   \eqref{eq:quadratic_1st} and   \eqref{eq:ode_theta}  shows that  
\begin{align*}
{\frac{\delta \ol{ V}^\phi_i}{\delta \phi_h}}(t,x; \phi'_h)
 &=
  \frac{1}{2}  \sE\left[\int_t^T X_s ^\top \left( 
 \Psi_i B_h K'_h+(B_h K'_h)^\top  \Psi_i
 +K^\top (R_i)_{ h}K'_h
+ (K'_h)^\top \big((R_i)_{ h}\big)^\top K
\right)X_s\d s
 \right]
 \\
 & =
  \sE\left[\int_t^T  \left( 
 X_s^\top \Psi_i B_h K'_hX_s  
 +(KX_s)^\top (R_i)_{ h}K'_h X_s\right)\d s
 \right],
\end{align*}
where we used the symmetry of $ \Psi_i(\cdot)$. 
Then by \eqref{eq:value_derivative_1st_lq}, 
\begin{align}
\label{eq:difference_1st_derivative_term1}
\begin{split}
& \frac{\delta V^{t,x}_i}{\delta \phi_h} (\phi;\phi'_h)
 - {\frac{\delta \ol{ V}^\phi_i}{\delta \phi_h}}(t,x; \phi'_h)
\\
&\q =
  \sE \bigg[\int_t^T 
 \bigg( X_s^\top Q_i  Y^{h}_s
 +(K X_s)^\top  
R_i  K Y^{h}_s 
 - X_s^\top \Psi_i B_h K'_hX_s 
 \bigg)
  \d s  + X_T^\top G_i  Y^{h}_T\bigg].
  \end{split}
\end{align}
Applying It\^{o}'s formula to 
$s\mapsto    X_s^\top  \Psi_i (s)  Y^h_s$ and using \eqref{eq:X_sensitivity_first_lq}  and  \eqref{eq:ode_psi} imply that 
\begin{align*}
\begin{split}
\sE[X_T^\top G_i  Y^{h}_T]
& =    \sE \bigg[\int_t^T 
 \bigg( X_s^\top \dot \Psi_i  Y^{h}_s
 +\la   \Psi_i X_s, (A+BK)  Y^h_s+B_hK'_h X_s \ra 
 + \la   \Psi_i Y^h_s, (A+BK)  X_s \ra 
 \bigg)
  \d s\bigg]  
  \\
& =  -  \sE \bigg[\int_t^T 
 \bigg( X_s^\top  (Q_i+K^\top R_i K)  Y^{h}_s
- X_s ^\top \Psi_i B_hK'_h X_s
 \bigg)
  \d s\bigg].    
  \end{split}
\end{align*}
This along with  \eqref{eq:difference_1st_derivative_term1} proves 
$\frac{\delta V^{t,x}_i}{\delta \phi_h} (\phi;\phi'_h)
 - {\frac{\delta \ol{ V}^\phi_i}{\delta \phi_h}}(t,x; \phi'_h)=0$.

Next we  prove the equivalence of   second-order linear derivatives. 
By   \eqref{eq:value_derivative_2nd_lq}
and $R_iE_h=(R_i)_h$,
\begin{align}
 \label{eq:prob_term1} 
 \begin{split} 
  \frac{\delta^2 V^{t,x}_{i}}{\delta \phi_h \delta \phi_\ell} (\phi;\phi'_h, \phi''_\ell)
&   = \sE \bigg[\int_t^T
\bigg( Y^\ell_s  Q_i  Y^h_s
+ (K  Y^\ell_s)^\top R_i K Y^h_s
\\
&\quad   
  +(K Y^h_s)^\top (R_i)_\ell K''_\ell X_s
  +(K  Y^\ell_s)^\top (R_i)_h K'_h X_s
  +(K''_\ell  X_s)^\top (R_i)_{\ell h} K'_h X_s
 \\
&\q   +  
X^\top_s Q_i Z_s
+(K  X_s)^\top 
\Big( R_i  K Z_s +(R_i )_\ell K''_\ell  Y^h_s+(R_i )_hK'_h  Y^\ell_s\Big)
\bigg)  \d s\bigg]
 \\
 &\q 
+\sE\left[ (Y^\ell_T  )^\top G_i Y^h_T +X^\top_T G_i Z_T\right].
 \end{split}
\end{align}  
Applying It\^{o}'s formula to 
$s\mapsto (Y^\ell_s  )^\top \Psi_i(s)  Y^h_s +X^\top_s \Psi_i(s) Z_s$ 
and using \eqref{eq:X_sensitivity_first_lq}, \eqref{eq:X_sensitivity_second_lq} and  \eqref{eq:ode_psi},
\begin{align*}
&\sE\left[ (Y^\ell_T  )^\top G_i Y^h_T +X^\top_T G_i Z_T\right]
\\
&\q = \sE\left[ \int_t^T \left( \la  \Psi_i Y^\ell_s ,  \d Y^h_s\ra + \la \d Y^\ell_s, \Psi_i Y^h_s\ra 
  +\la  \Psi_i  X_s , \d Z_s\ra+ \la  \d X_s ,  \Psi_i  Z_s\ra \right)\d s \right]
  \\
&\qq    +  \sE\left[ \int_t^T \left((Y^\ell_s)^\top \dot  \Psi_i   Y^h_s +X_s^\top  \dot \Psi_i Z_s\right) \d s\right]
\\
&\q = \sE\left[ \int_t^T \left((Y^\ell_s)^\top    \Psi_i  B_h K'_h X_s  
+(Y^h_s)^\top    \Psi_i  B_\ell K''_\ell X_s  
 +X_s^\top    \Psi_i (B_\ell K''_\ell Y^h_s
 +B_h K'_h Y^\ell_s) \right) \d s\right]
 \\
 &\qq    -  \sE\left[ \int_t^T \left((Y^\ell_s)^\top (Q_i+K^\top R_i K)  Y^h_s +X_s^\top  (Q_i+K^\top R_i K) Z_s\right) \d s\right].
\end{align*}
Substituting this into \eqref{eq:prob_term1}  implies that 
   \begin{align}
 \label{eq:prob_term2} 
 \begin{split} 
  &\frac{\delta^2 V^{t,x}_{i}}{\delta \phi_h \delta \phi_\ell} (\phi;\phi'_h, \phi''_\ell)
  \\
&  \q  = \sE \bigg[\int_t^T
\bigg( 
(Y^\ell_s)^\top  \left(  \Psi_i  B_h K'_h 
+(B_h K'_h )^\top \Psi_i
+K^\top (R_i)_h K'_h
+ (K'_h)^\top((R_i )_h)^\top K 
\right)
X_s  
\\
&\qq 
+(Y^h_s)^\top 
\left(   \Psi_i  B_\ell K''_\ell 
+ (B_\ell K''_\ell )^\top \Psi_i
+K^\top (R_i)_\ell K''_\ell
+ (K''_\ell)^\top((R_i )_\ell)^\top K
\right) X_s  
\\
&\qq 
  +(K''_\ell  X_s)^\top (R_i)_{\ell h} K'_h X_s
\bigg)  \d s\bigg].
 \end{split}
\end{align}   
Meanwhile,
applying It\^{o}'s formula to 
$s\mapsto \frac{1}{2} X_s ^\top \Lambda^{h,\ell}_i(s)  X_s+ \lambda^{h,\ell}_i (s)$ 
and using  \eqref{eq:quadratic_2nd}  and  \eqref{eq:ode_lambda} 
give
\begin{align}
\label{eq:analytic_term1}
\begin{split}
{\frac{\delta^2 \ol{V}^\phi_i}{\delta \phi_h\delta \phi_\ell}}(t,x; \phi'_h, \phi''_\ell)
 & =
  \sE\left[\int_t^T  \left( 
 X_s^\top  \Theta^{h}_iB_\ell K''_\ell X_s + X_s^\top  \Theta^{\ell}_i B_h K'_h  X_s  
 +(K''_\ell X_s)^\top (R_i)_{ \ell h}K'_h X_s\right)\d s
 \right].
 \end{split}
 \end{align}
 Finally, applying   It\^{o}'s formula to 
$s\mapsto   (Y^h_s  )^\top \Theta^\ell_i(s)  X_s + (Y^\ell_s  )^\top \Theta^h_i(s)  X_s $ 
and using  \eqref{eq:X_sensitivity_first_lq} 
yield
 \begin{align*}
0&=   \sE\bigg[ \int_t^T \bigg( 
 (Y^h_s  )^\top \dot \Theta^\ell_i  X_s + (Y^\ell_s  )^\top \dot \Theta^h_i  X_s
 +
\la   \Theta^\ell_i   X_s ,   (A +B K )  Y^{h}_s   
+B_h K'_h X_s  \ra
  \\
&\q  + \la   \Theta^\ell_i   Y^h_s ,   (A +B K )  X_s   
 \ra + \la   \Theta^h_i   X_s ,   (A +B K )  Y^{\ell}_s   
+B_\ell K''_\ell X_s  \ra
+ \la   \Theta^h_i Y^\ell_s ,   (A +B K )  X_s   \ra
   \bigg) \d s \bigg]
\\
&= \sE\bigg[ \int_t^T \bigg( 
 (Y^h_s  )^\top \left( \dot \Theta^\ell_i +(A +B K )^\top  \Theta^\ell_i + \Theta^\ell_i (A+BK)  \right) X_s 
 \\
 &\q  
  + (Y^\ell_s  )^\top \left( \dot \Theta^h_i +(A +B K )^\top  \Theta^h_i + \Theta^h_i  (A +B K )  \right) X_s 
 +
   X^\top _s     \Theta^\ell_i   B_h K'_h X_s   
+   X^\top_s    \Theta^h_i    B_\ell K''_\ell X_s   
  \bigg)\d s \bigg], 
\end{align*}
 which along with \eqref{eq:ode_theta}, \eqref{eq:prob_term2}  and 
 \eqref{eq:analytic_term1} implies that 
 $\frac{\delta^2 V^{t,x}_{i}}{\delta \phi_h \delta \phi_\ell} (\phi;\phi'_h, \phi''_\ell) = {\frac{\delta^2 \ol{V}^\phi_i}{\delta \phi_h\delta \phi_\ell}}(t,x; \phi'_h, \phi''_\ell)$. 
 
We then prove the equivalence of  Conditions \eqref{eq:prob_symmetry} and
  \eqref{eq:lq_symmetry}.
   By \eqref{eq:equivalence}, it is clear that     \eqref{eq:lq_symmetry} implies \eqref{eq:prob_symmetry}.
   On the other hand, if \eqref{eq:prob_symmetry} holds,    by   \eqref{eq:zero_order_term} and \eqref{eq:equivalence},
$$
x^\top(     \Lambda^{i,j}_i  (t) -  \Lambda^{j,i}_j(t))x+\int_t^T \tr\left(\sigma\sigma^\top (\Lambda^{i,j}_i(s)-\Lambda^{j,i}_j (s) )\right)\d s =0
\q    \fa (t,x)\in [0,T]\times \sR^{n_x}, 
$$
which  along with
 $\spn\{xx^\top |x\in\sR^{n_x}\}=\sS^{n_x}$ (which can be shown by  the eigenvalue decomposition)  implies that
 $\Lambda^{i,j}_i  (T)-  \Lambda ^{j,i}_j(T)=0$
 and 
    $\dot{ \Lambda}^{i,j}_i   -\dot{ \Lambda}^{j,i}_j+\Lambda^{i,j}_i-\Lambda^{j,i}_j =0 $.   This shows that 
     \eqref{eq:lq_symmetry} holds. 
     
     Finally, by \eqref{eq:zero_order_term}, it is clear that \eqref{eq:lq_symmetry_mpg} implies   \eqref{eq:lq_symmetry}
     for all $(t,x)\in [0,T]\times \sR^{n_x}$, which shows that $\cG_{\rm LQ}$ is an MPG. Conversely, 
     suppose that $\cG_{\rm LQ}$ is an MPG. Using    the fact the policy class $\pi_i$ in  \eqref{eq:policy_lq}  is a vector space and the mean-value theorem,
     one can show that    
  \eqref{eq:lq_symmetry} must hold  for all $(t,x)\in [0,T]\times \sR^{n_x}$,
  which implies that $ x^\top( \Lambda^{i,j}_i(t)   -
  \Lambda^{j,i}_j(t))x=0  $  for all $(t,x)\in [0,T]\times \sR^{n_x}$.
  This along with $  \Lambda^{i,j}_i(t)   -
  \Lambda^{j,i}_j(t)\in \sS^{n_x} $  implies that $ \Lambda^{i,j}_i(t)   =
  \Lambda^{j,i}_j(t)$  for all $t\in [0,T]$.
\end{proof}

 \section{Proofs of Technical Lemmas}
 \label{sec:techinical_lemma}

 \subsection{Proofs of Lemmas   \ref{lemma:derivative_line} and   \ref{lemma:multi-dimension_derivative}}

 \begin{proof}[Proof of Lemma \ref{lemma:derivative_line}]
 By Definition \ref{eq:first_der_def} and 
 the linear differentiability of $f$, 
$[0,1]\ni \eps\mapsto f(\phi^\eps)\in \sR$ admits a right-hand derivative 
$ \frac{\delta f}{\delta \phi_i} (\phi ; \phi'_i-\phi_i)$
at $\eps=0$. 
We now  prove the differentiability of $\eps\mapsto f(\phi^\eps)$ at $\eps_0\in (0,1)$.
For all $\eps>0$ such that $\eps_0+\eps<1$, 
\begin{align*}
\phi^{\eps_0+\eps} = ( \phi^{\eps_0}_i+\eps(\phi'_i-\phi_i),\phi_{-i})=\left( \phi^{\eps_0}_i+\frac{\eps}{1-\eps_0}(\phi'_i-\phi^{\eps_0}_i),\phi_{-i}\right).
\end{align*}
Thus  by the linear differentiability of $f$, 
\begin{align*}
\lim_{\eps\downarrow   0 }\frac{  f(\phi^{\eps_0+\eps} ) -f (\phi^{\eps_0} ) }{  \eps}
&= \frac{1}{ 1-\eps_0}
\lim_{\eps\downarrow   0 }\frac{  f\left(\left( \phi^{\eps_0}_i+\frac{\eps}{1-\eps_0}(\phi'_i-\phi^{\eps_0}_i),\phi_{-i}\right) \right) -f (\phi^{\eps_0} ) }{  \eps/(1-\eps_0)}
\\
&=  \frac{1}{ 1-\eps_0} \frac{\delta f}{\delta \phi_i} (\phi^{\eps_0} ; \phi'_i-\phi^{\eps_0}_i)=  \frac{\delta f}{\delta \phi_i} (\phi^{\eps_0} ; \phi'_i-\phi_i),
\end{align*}
where the last identity used the linearity of $\frac{\delta f}{\delta \phi_i} (\phi^{\eps_0} ; \cdot):  \spn(\pi_i) \to \sR$.
On the other hand, 
for all $\eps>0$ such that $\eps_0-\eps>0$, 
$
\phi^{\eps_0-\eps} = ( \phi^{\eps_0}_i-\eps(\phi'_i-\phi_i),\phi_{-i})=\left( \phi^{\eps_0}_i+\frac{\eps}{\eps_0}(\phi_i-\phi^{\eps_0}_i),\phi_{-i}\right).
$
Hence  by the linear differentiability of $f$, 
\begin{align*}
\lim_{\eps\downarrow   0 }\frac{  f(\phi^{\eps_0-\eps} ) -f (\phi^{\eps_0} ) }{ - \eps}
&= -\frac{1}{\eps_0} 
\lim_{\eps\downarrow   0 }\frac{  f\left(\left( \phi^{\eps_0}_i+\frac{\eps}{\eps_0}(\phi_i-\phi^{\eps_0}_i),\phi_{-i}\right) \right) -f (\phi^{\eps_0} ) }{  \eps/\eps_0 }
\\
&=- \frac{1}{\eps_0} \frac{\delta f}{\delta \phi_i} (\phi^{\eps_0} ; \phi_i-\phi^{\eps_0}_i)=  \frac{\delta f}{\delta \phi_i} (\phi^{\eps_0} ; \phi'_i-\phi_i).
\end{align*}
This proves  the differentiability of $\eps\mapsto f(\phi^\eps)$ at $\eps_0\in (0,1)$. 
Similarly, $  \eps\mapsto f(\phi^\eps)$ admits a left-hand derivative at $1$.
 \end{proof}

\begin{proof}[Proof of Lemma \ref{lemma:multi-dimension_derivative}]
 Let   $z,\phi\in \pi^{(N)}$  be fixed. 
 For simplicity, 
we assume $N=2$ and $I_N= \{i,j\}$,   as the same argument can be easily extended to $N\ge 3$. 

We  first  prove the differentiability of 
 $ r \mapsto  f(z+r( \phi -z)) $  admits a right-hand derivative 
at $0$.  
For each $r\in [0,1]$, let $\phi^r= z+r( \phi -z)$.
Recall that by Lemma \ref{lemma:derivative_line}, for all $r\in [0,1]$, 
$\eps\mapsto f ((\phi^{r}_{-j}, z_j+ \eps(\phi_j-z_j)))$
is differentiable on $[0,1]$ 
and 
$$
\frac{\d}{\d \eps}f((\phi^{r}_{-j}, z_j+ \eps(\phi_j-z_j)))
= \frac{\delta  f}{ \delta\phi_j}((\phi^{r}_{-j}, z_j+ \eps(\phi_j-z_j)) ;   \phi_j-z_j)
\quad \fa \eps\in (0,1).
$$
Then for all $r  \in (0,1)$,   using the fact that $\phi^r_{-i}=\phi^r_j$ (note that $I_N= \{i,j\}$), 
\begin{align*}
 f(\phi^{r}  )-f(z ) 
  &   = f(\phi^{r} )
- f(( \phi^r_i, z_{-i}))
+ f(( \phi^r_i, z_{-i})  )
-f(z )  
\\
&  = \frac{\delta  f}{ \delta\phi_j}( (z_i+r(\phi_i-z_i),z_j+r\eps_r (\phi_j-z_j)   ) ;  \phi_j-z_j)r
+ f(( \phi^r_i, z_{-i})  )
-f(z ),  
\end{align*}
for some $\eps_r \in (0,1)$, 
where the last identity used the mean value theorem.
Dividing both sides of the above identity by $r$ and letting $r\to 0$ yield
\begin{align*}
\lim_{r\downarrow 0 }\frac{f(\phi^{r}  )-f(z  )}{r}
&  =\lim_{r\downarrow 0}  \frac{\delta  f}{ \delta\phi_j}( (z_i+r(\phi_i-z_i),z_j+r\eps_r (\phi_j-z_j)   ) ;  \phi_j-z_j) 
+ \frac{\delta  f}{\delta\phi_i }(z ;   \phi_i-z_i)
\\
&  =  \frac{\delta  f}{ \delta\phi_j}( z ;  \phi_j-z_j) 
+ \frac{\delta  f}{\delta\phi_i }(z ;  \phi_i-z_i),
\end{align*}
where the last identity used the continuity of 
$
 {\eps}\mapsto  \frac{\delta  f}{\delta \phi_j}(z+\eps\cdot (\phi-z) ;  \phi_j-z_j)
$
 at $0$,
 which holds due to the continuity  assumption of $\frac{\delta f}{\delta \phi_j}$ 
  and the linearity of  $ \frac{\delta f}{\delta \phi_j} $  with respect to the last   argument.      
This proves the desired differentiability of $r\mapsto f (\phi^{r} )$ at $0$.
Similarly, 
$r\mapsto f (\phi^{r} )$ is differentiable at   $r\in (0,1]$.
\end{proof}

\subsection{Proofs of Lemmas   \ref{lemma:1st_derivative_state},  \ref{lemma:2nd_derivative_state}
 and \ref{lemma:control_derivative}}
\label{sec:differentiability_lemma}
 \begin{proof}[Proof of Lemma   \ref{lemma:1st_derivative_state}]
 Since $B$, $\Sigma$ and $\phi$ have bounded first-order derivatives and $\phi'_i\in \spn(\pi_i)$ is of linear growth in $x$, 
 standard a-priori estimates of   SDEs (see e.g., \cite[Theorem 3.4.3]{zhang2017backward})
 show that \eqref{eq:X_sensitivity_first} admits a unique solution $\frac{\delta X^{t,x}}{\delta \phi_i}(\phi;\phi'_i)\in \cS^\infty([t,T];\sR^{n_x})$. 
As the terms $(\p_{a_i} B^\phi)[\phi'_i](s, X^{t,x, \phi}_s  )$ 
and $(\p_{a_i} \Sigma^\phi)[\phi'_i](s, X^{t,x, \phi}_s  )  $ are linear with respect to $\phi'_i$, 
one can easily verify by the uniqueness of solutions  that for all $\alpha, \beta\in \sR$ and $\phi'_i,\phi_i''\in \spn(\pi_i)$, 
$
\frac{\delta X^{t,x}}{\delta \phi_i}(\phi;\alpha{\phi_i'}+\beta \phi_i'' )
=\alpha\frac{\delta X^{t,x}}{\delta \phi_i}(\phi;{\phi_i'} )
++\beta\frac{\delta X^{t,x}}{\delta \phi_i}(\phi;  \phi_i'' ).
$
This proves the linearity of    $\spn(\pi_i)\ni \phi'_i\mapsto \frac{\delta X^{t,x}}{\delta \phi_i}(\phi;\phi'_i)\in \cS^\infty([t,T];\sR^{n_x}) $ in Item \ref{item:linear_dX_dphi}. 
Finally,   observe that 
for all $\eps\in [0,1)$, $ X^{t,x, (\phi_i+\eps (\phi_i'-\phi_i),\phi_{-i})}$  satisfies 
\begin{align*} 
\d X_s & = B(s, X_s, (\phi_i+\eps (\phi'_i-\phi_i),\phi_{-i})(s,X_s) )\d s 
\\
&\quad 
+ \Sigma(s,   X_s , (\phi_i+\eps (\phi'_i-\phi_i),\phi_{-i})(s,X_s))\d W_s, 
\q s\in [t,T];
\quad 
 X_t   =x. 
 \end{align*}
 As 
$B,\Sigma$, $\phi$ and $\phi'_i$
are continuous differentiable in $(x,a)$,
$\cS^\infty \text-\p_\eps  X^{t,x, (\phi_i+\eps (\phi_i'-\phi_i),\phi_{-i})}\big\vert_{\eps=0}$ exists  
due to    \cite[Theorem 4, p.~105]{krylov2008controlled}
and satisfies 
\eqref{eq:X_sensitivity_first}
due to    \cite[Remark 5, p.~108]{krylov2008controlled}. 
The uniqueness of solutions to \eqref{eq:X_sensitivity_first}
then yields Item \ref{item:Y=dX_dphi}.
 \end{proof}

 \begin{proof}[Proof of Lemma   \ref{lemma:2nd_derivative_state}]
 Standard well-posedness results   of   SDEs
 and the regularity conditions of $B$, $\Sigma$, $\phi$, $\phi'_i$ and $\phi''_j$
 show that 
 \eqref{eq:X_sensitivity_second}  admits a unique solution
 $\frac{\delta^2 X^{t,x}}{\delta \phi_i\delta \phi_j}(\phi;\phi'_i,\phi''_j)$.
 
For Item \ref{item:bilinear_d2X_d2phi}, 
 observe that by Lemma  \ref{lemma:1st_derivative_state} and \eqref{eq:F_B},
 for any given $\phi''_j\in \spn(\pi_j)$, the map 
 $\phi'_i\mapsto  \big(F_B, F_\Sigma\big)\left(s,X^{t,x, \phi}_s, \frac{\delta X^{t,x}}{\delta \phi_i}(\phi;\phi'_i), \frac{\delta X^{t,x}}{\delta \phi_j}(\phi;\phi''_j), \phi'_i, \phi''_j\right)$
  is linear. 
  This along with the linearity of the SDE 
\eqref{eq:X_sensitivity_second}
shows that 
  $\phi'_i\mapsto \frac{\delta^2 X^{t,x}}{\delta \phi_i\delta \phi_j}(\phi;\phi'_i,\phi''_j)$
  is linear. 
  Similar arguments yield 
the linearity of   $\phi''_j\mapsto \frac{\delta^2 X^{t,x}}{\delta \phi_i\delta \phi_j}(\phi;\phi'_i,\phi''_j)$. 
 
For Item \ref{item:symmetry_d2X_d2phi},  
  observe that 
  $
  F_B(t,x, y_1, y_2,  \phi'_i, \phi''_j )
  =  F_B (t,x, y_2, y_1,  \phi''_j,\phi'_i )
  $,
  as $B$ is twice continuously differentiable with respect to $(x,a)$ and hence has a symmetric Jacobian. 
Similarly, we have  
    $F_\Sigma \left(t,x, y_1, y_2,  \phi'_i, \phi''_j\right)
  =  F_\Sigma\left(t,x, y_2, y_1,  \phi''_j,\phi'_i\right)$.
  This proves that  
both  $ \frac{\delta^2 X^{t,x}}{\delta \phi_i\delta \phi_j}(\phi;\phi'_i,\phi''_j)  $ and 
$\frac{\delta^2 X^{t,x}}{\delta \phi_j\delta \phi_i}(\phi;\phi''_j,\phi'_i)  $
satisfy \eqref{eq:X_sensitivity_second},
which along with  
  the uniqueness of solutions to   \eqref{eq:X_sensitivity_second}
  yields Item \ref{item:symmetry_d2X_d2phi}.
  
  Finally,  
for all $\eps\in [0,1)$, 
let $\phi^\eps =(\phi_j+\eps (\phi''_j-\phi_j),\phi_{-j})$.
By Lemma   \ref{lemma:1st_derivative_state},
$ \frac{\delta X^{t,x}}{\delta \phi_i}(\phi^\eps ;\phi'_i)$  satisfies 
 \begin{align*} 
\begin{split}
\d Y_s & = (\p_x B^{\phi^\eps}) (s, X^{t,x, \phi^\eps}_s  )  Y_s   \d s 
 +  \big((\p_x \Sigma^{\phi^\eps}) (s, X^{t,x, {\phi^\eps}}_s  )  Y_s  \big)
\d W_s
\\
&\quad 
+ (\p_{a_i} B^{\phi^\eps})[\phi'_i](s, X^{t,x, {\phi^\eps}}_s  )  \d s
+ (\p_{a_i} \Sigma^{\phi^\eps})[\phi'_i](s, X^{t,x, {\phi^\eps}}_s  )  \d W_s
\quad \fa  s\in [t,T].
 \end{split}
 \end{align*}
By     \cite[Theorem 4, p.~105]{krylov2008controlled}, 
$\cS^\infty \text-\p_\eps 
\frac{\delta X^{t,x}}{\delta \phi_i}(\phi^\eps ;\phi'_i)
\big\vert_{\eps=0}$ exists and satisfies 
\eqref{eq:X_sensitivity_second},
which along with  
  the uniqueness of solutions to   \eqref{eq:X_sensitivity_second}
  yields Item  \ref{item:Z=d^2X_d^2phi}.  
This finishes the proof. 
 \end{proof}

\begin{proof}[Proof of Lemma   \ref{lemma:control_derivative}]
For Item \ref{item:control_1st}, the linearity of $ \frac{\delta \alpha^{t,x}}{\delta \phi_i}(\phi;\cdot)$ follows from
\eqref{eq:control_1st} and the linearity of  $ \frac{\delta X^{t,x}}{\delta \phi_i}(\phi;\cdot)$ (see Lemma \ref{lemma:1st_derivative_state}).
For each $\eps\in [0,1)$, let $\phi^\eps=(\phi_{i}+\eps(\phi'_i-\phi_i),\phi_{-i})$ and observe that 
$$
\alpha^{t,x,\phi^\eps}_s=\phi^\eps (s, X^{t,x,\phi^\eps}_s)=  (\phi_{i}+\eps(\phi'_i-\phi_i),\phi_{-i})(s, X^{t,x,\phi^\eps}_s).
$$
Hence    \cite[Theorem 9, p.~97]{krylov2008controlled}, the differentiability of $\phi$ and $\phi'_i$
and the $\cS^\infty$-differentiability of $(X^{t,x,\psi^\eps})_{\eps\in [0,1)}$ imply the existence and the desired formula of   
 $\cH^\infty \text-\p_\eps \alpha^{t,x, (\phi_i+\eps (\phi_i'-\phi_i),\phi_{-i})}\big\vert_{\eps=0}$.

 For Item \ref{item:control_2nd}, the bilinearity and symmetry of $ \frac{\delta^2 \alpha^{t,x}}{\delta \phi_i\delta\phi_j}(\phi;\cdot,\cdot)$ follow directly from
\eqref{eq:control_2nd} and     Lemmas  \ref{lemma:1st_derivative_state}  and \ref{lemma:2nd_derivative_state}. 
 For each $\eps\in [0,1)$, let $\phi^\eps=(\phi_{j}+\eps(\phi'_j-\phi_j),\phi_{-j})$ and  
  \begin{align}
  \frac{\delta \alpha^{t,x}}{\delta \phi_i}(\phi^\eps;\phi'_i)_s
  & =(\p_x \phi^\eps)(s,X^{t,x,\phi^\eps}_s)  \frac{\delta X^{t,x}}{\delta \phi_i}(\phi^\eps;\phi'_i)_s+E_i\phi'_i(s, X^{t,x,\phi^\eps}_s),
 \end{align}
 Applying     \cite[Theorem 9, p.~97]{krylov2008controlled} again yields the   expression of 
$\cH^\infty \text-\p_\eps 
\frac{\delta \alpha^{t,x}}{\delta \phi_i}((\phi_j+\eps (\phi''_j-\phi_j),\phi_{-j});\phi'_i)
\big\vert_{\eps=0} $. 
\end{proof}

 \end{document}